\def\imod#1{\allowbreak\mkern10mu({\operator@font mod}\,\,#1)}
\theoremstyle{plain}
\newtheorem{thm}{Theorem}
\theoremstyle{definition}
\newtheorem{defi}[thm]{Definition}
\newtheorem{cor}[thm]{Corollary}
\newtheorem*{cor*}{Corollary}
\newtheorem*{lem*}{Lemma}
\newtheorem*{thm*}{Theorem}
\newtheorem{lem}[thm]{Lemma}
\newtheorem{prop}[thm]{Proposition}
\newtheorem{remark}[thm]{Remark}
\newtheorem{claim}[thm]{Claim}
\DeclareMathOperator{\cof}{cof}
\DeclareMathOperator{\dom}{dom}
\DeclareMathOperator{\ot}{ot}
\DeclareMathOperator{\id}{id}
\DeclareMathOperator{\GCH}{GCH}
\DeclareMathOperator{\ZFC}{ZFC}
\DeclareMathOperator{\cf}{cf}
\let\split\undefined
\DeclareMathOperator{\split}{split}
\renewcommand{\id}{\mathrm{id}}
\newcommand{\lan}{\langle}
\DeclareMathOperator{\crt}{crit}
\DeclareMathOperator{\Add}{Add}
\DeclareMathOperator{\Col}{Col}
\newcommand{\bbP}{\mathbb{P}}
\newcommand{\bbQ}{\mathbb{Q}}
\newcommand{\bbR}{\mathbb{R}}
\newcommand{\bbS}{\mathbb{S}}
\newcommand{\ka}{\kappa}
\renewcommand{\phi}{\varphi}
\newcommand{\fri}{\mathfrak{i}}
\newcommand{\spec}{\mathfrak{sp}_{\mathfrak i}}
\newcommand{\calA}{\mathcal{A}}
\newcommand{\calB}{\mathcal{B}}
\newcommand{\calC}{\mathcal{C}}
\newcommand{\calF}{\mathcal{F}}
\newcommand{\calG}{\ensuremath{\mathcal{G}}}
\newcommand{\calI}{\ensuremath{\mathcal{I}}}
\newcommand{\calJ}{\ensuremath{\mathcal{J}}}
\newcommand{\calU}{\mathcal{U}}
\newcommand{\calW}{\mathcal{W}}
\newcommand{\p}{\mathcal{P}}
\newcommand{\ra}{\rangle}
\newcommand{\concatB}{\mathbin{\rotatebox[origin=c]{90}{\scalebox{.7}{(\kern1ex)}}}}
\date{}
\begin{document}

\title{Strong independence and its spectrum}


\author{Monroe Eskew}
\address{University of Vienna, Institute for Mathematics, Kolingasse 14-16, 1090 Vienna, Austria}
\email{monroe.eskew@univie.ac.at}

\author{Vera Fischer}
\address{University of Vienna, Institute for Mathematics, Kolingasse 14-16, 1090 Vienna, Austria}
\email{vera.fischer@univie.ac.at}


\thanks{The authors wish to thank the Austrian Science Fund FWF for the generous support through grants number  START Y1012-N35 (Eskew and Fischer), as well as I4039 (Fischer).}

\makeatletter
\@namedef{subjclassname@2020}{%
  \textup{2020} Mathematics Subject Classification}
\makeatother
\subjclass[2020]{03E35, 03E17, 03E55}


\keywords{consistency; large cardinals; higher Baire spaces; combinatorial cardinal characteristics; independent families; spectrum}


\begin{abstract} For $\mu, \kappa$ infinite, say $\calA\subseteq [\kappa]^\kappa$ is a $(\mu,\kappa)$-maximal independent family if whenever $\calA_0$ and $\calA_1$ are pairwise disjoint non-empty in $[\calA]^{<\mu}$ then $\bigcap\calA_0\backslash\bigcup\calA_1 \not= \emptyset$, $\calA$ is maximal under inclusion among families with this property, and moreover all such Booelan combinations have size $\kappa$.  We denote by $\spec(\mu,\kappa)$ the set of all cardinalities of such families, and if non-empty, we let $\mathfrak{i}_\mu(\kappa)$ be its minimal element. Thus, $\mathfrak{i}_\mu(\kappa)$ (if defined) is a natural higher analogue of the independence number on $\omega$ for the higher Baire spaces. 
In this paper, we study $\spec(\mu,\kappa)$ for $\mu,\kappa$ uncountable. Among others, we show that:

\begin{enumerate}
\item  The property $\spec(\mu,\kappa)\neq\emptyset$ cannot be decided on the basis of ZFC plus large cardinals.
\item Relative to a measurable, it is consistent that:
\begin{enumerate}
\item $(\exists \kappa{>}\omega) \, \fri_\kappa(\kappa)<2^\kappa$.
\item \label{between} $(\exists \kappa{>}\omega)\,\kappa^+<\mathfrak{i}_{\omega_1}(\kappa)<2^\kappa$.
\end{enumerate}
To the best knowledge of the authors, (\ref{between}) is the first example of a $(\mu,\kappa)$-maximal independent family of size strictly between $\kappa^+$ and  $2^\kappa$, for uncountable $\kappa$.
\item $\spec(\mu,\kappa)$ cannot be quite arbitrary.
\end{enumerate}
\end{abstract}

\maketitle


\section{Introduction}

In this paper, we set to study higher analogues of the classical notion of maximal independent families on $\omega$. Recall that a family $\calA\subseteq [\omega]^\omega$ is independent if for every two pairwise disjoint non-empty subfamilies $\calA_0$, $\calA_1$ of $\calA$ the set $\bigcap\calA_0\backslash \bigcup\calA_1$ is infinite. A maximal independent family is an independent family which is maximal under inclusion.  There have been studies of independent families on uncountable cardinals $\kappa$, but most of those do not consider the notion of maximality (with the exception of~\cite{Kunen} and~\cite{VFDM2}), as the very existence of such maximal independent families on an uncountable $\kappa$ is not guaranteed by the axioms ZFC.  In fact, by an earlier result of Kunen~\cite{Kunen}, the existence of an analogue for countable Booelan combinations
is equiconsistent with the existence of a measurable cardinal. 

The most mild generalization of the notion for an uncountable regular $\kappa$ has been introduced and studied in~\cite{VFDM2}. For a given regular uncountable $\kappa$, a family $\calA\subseteq [\kappa]^\kappa$ is said to be maximal $\kappa$-independent if for every two pairwise disjoint, non-empty, finite subfamilies $\calA_0$, $\calA_1$ of $\calA$ the set $\bigcap\calA_0\backslash \bigcup\calA_1$ is unbounded in $\kappa$ and the family $\calA$ is maximal with respect to this property under inclusion. In contrast to the most general case, the existence of maximal $\kappa$-independent families is guaranteed by the Axiom of Choice. Thus, the minimal cardinality of a maximal $\kappa$-independent family, denoted $\mathfrak{i}(\kappa)$ and referred to as the $\kappa$-independence number, is defined for every regular uncountable $\kappa$. In~\cite{VFDM2} it is shown that $\kappa^+\leq \mathfrak{i}(\kappa)\leq 2^\kappa$, both $\mathfrak{d}(\kappa)$ and $\mathfrak{r}(\kappa)$ are lower bounds of  $\mathfrak{i}(\kappa)$, there is always a maximal $\kappa$-independent family of cardinality $2^\kappa$, and 
that starting from a measurable $\kappa$, one can obtain a generic extension in which $\mathfrak{i}(\kappa)=\kappa^+<2^\kappa$.

On the other hand, the most general higher analogue of independence in the uncountable is given in Definition~\ref{def.ind.fam}. For infinite cardinals $\mu\leq\kappa$, we consider subfamilies $\calA$ of $[\kappa]^\kappa$ with the property that for every two pairwise disjoint, non-empty $\calA_0$, $\calA_1$ in $[\calA]^{<\mu}$, the Boolean combination $\bigcap\calA_0\backslash \bigcup\calA_1$ is nonempty.  If $\calA$ is in addition maximal and every Booelan combination has size $\kappa$, then we say that $\calA$ is a $(\mu,\kappa)$-maximal independent family, or a $(\mu,\kappa)$-mif.  The set of all cardinalities of $(\mu,\kappa)$-mif's is denoted $\spec(\mu,\kappa)$ and its minimal element, if non-empty, is denoted $\mathfrak{i}_\mu(\kappa)$. 

In Section \ref{section.basic}, apart from obtaining necessary conditions
on $\mu$ and $\kappa$ for the existence of a $(\mu,\kappa)$-mif (see Proposition~\ref{ineq2}), building on earlier result of Kunen~\cite{Kunen}
we show in Corollary~\ref{mu.kappa.cof}  that:

\begin{thm*}
If there is a $(\mu,\kappa)$-mif, then $
    \hbox{cf}(\kappa)\geq\mu$.
\end{thm*}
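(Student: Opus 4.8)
The plan is to assume a $(\mu,\kappa)$-mif $\calA$ exists with $\cf(\kappa)<\mu$ and derive a contradiction, by extracting from $\calA$ a filter on $\kappa$ whose completeness is incompatible with low cofinality. As a warm-up that already exhibits the mechanism: any $(\mu,\kappa)$-mif satisfies $|\calA|\ge\mu$, because if $|\calA|<\mu$ then every atom $\bigcap_{A\in s}A\setminus\bigcup_{A\in\calA\setminus s}A$ (for $s\subseteq\calA$) is itself one of the Boolean combinations governed by the definition and so has size $\kappa$; splitting each atom into two pieces of size $\kappa$ and collecting one piece from each yields an $X\in[\kappa]^\kappa$ meeting every Boolean combination of $\calA\cup\{X\}$ in a set of size $\kappa$, contradicting maximality. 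Thus maximality is contradicted by a suitable splitting set; the obstruction in general is that once $|\calA|\ge\mu$ a single Boolean combination need no longer contain any atom, so the combinations must be organised before they can be split.

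The heart of the argument, and the step I expect to be genuinely hard, is a Kunen-style extraction: from a $(\mu,\kappa)$-mif $\calA$ one should produce a proper, $\mu$-complete filter $\calU$ on $\kappa$ extending the co-bounded filter (ideally a uniform $\mu$-complete ultrafilter). Two observations are immediate. Maximality of $\calA$ says precisely that for every $X\in[\kappa]^\kappa$ either $X$ or $\kappa\setminus X$ contains a Boolean combination; and since Boolean combinations have size $\kappa$ while bounded sets do not, applying this to co-bounded $X$ shows every co-bounded subset of $\kappa$ contains a Boolean combination. The trouble is that the Boolean combinations are not downward directed — if some $A\in\calA$ occurs positively in one combination and negatively in another the two are disjoint — so they do not form a filter base, and turning them into a genuine $\mu$-complete filter requires using the $(<\mu)$-independence of $\calA$ in an essential way, exactly as in Kunen's treatment of the countable case. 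This reconciliation of the combinations into a single $\mu$-complete (ultra)filter is, I expect, the main obstacle of the whole proof.

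Given such a $\calU$, the conclusion is quick. Suppose $\lambda:=\cf(\kappa)<\mu$, and fix a strictly increasing sequence $\langle\delta_i:i<\lambda\rangle$ cofinal in $\kappa$ with $\delta_0=0$; then the sets $S_i:=[\delta_i,\delta_{i+1})$ for $i<\lambda$ partition $\kappa$ into bounded pieces. Each $\kappa\setminus S_i$ contains the co-bounded set $\kappa\setminus\delta_{i+1}$, hence lies in $\calU$, so by $\mu$-completeness (using $\lambda<\mu$) we get $\bigcap_{i<\lambda}(\kappa\setminus S_i)=\kappa\setminus\bigcup_{i<\lambda}S_i=\emptyset\in\calU$, contradicting that $\calU$ is proper. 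Therefore $\cf(\kappa)\ge\mu$.
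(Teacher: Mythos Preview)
Your plan is exactly the paper's route: the statement appears there as a one-line corollary of Kunen's lemma, which from any $(\mu,\kappa)$-mif produces a uniform $\mu$-complete ideal on $\kappa$, after which your partition argument in the final paragraph finishes verbatim.

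The step you flag as the obstacle is filled in the paper (following Kunen) not by coaxing the Boolean combinations themselves into a filter base, but as follows. First one shows that below some condition $p^*\in\Add(\mu,|\calA|)$ the family becomes \emph{densely maximal}: for every $q\le p^*$ and every $B\subseteq X_q$, some $r\le q$ has $X_r\subseteq B$ or $X_r\cap B=\emptyset$. (If no such $p^*$ existed, one amalgamates witnesses to failure along a maximal antichain into a single $B$ that contradicts ordinary maximality of $\calA$.) Then, working below $p^*$, one defines the \emph{density ideal} $\calI=\{Y: X_q\cap Y=\emptyset$ for a dense set of $q\le p^*\}$. This is $\mu$-complete simply because $\Add(\mu,|\calA|)$ is ${<}\mu$-closed, so the intersection of fewer than $\mu$ dense open sets is dense; it contains $[\kappa]^{<\kappa}$ since every $X_q$ has size $\kappa$; and dense maximality makes $q\mapsto[X_q]_{\calI}$ a dense embedding, so $\calI$ is proper. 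Your parenthetical hope for an ultrafilter is a red herring and unneeded: the quotient $\p(\kappa)/\calI$ is atomless Cohen forcing, so the dual filter is never an ultrafilter.
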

Moreover, we show that any $(\mu,\kappa)$-mif has a restriction which is a $(\mu,\kappa)$-\emph{densely} maximal independent family, or $(\mu,\kappa)$-dmif, an object whose existence has remarkable combinatorial consequences, and we give a sufficient condition for the existence of a $(\mu,\kappa)$-mif (see Lemma~\ref{converse}).

In Section \ref{section.density}, we give a generalization of the notion of a density ideal associated to $(\mu,\kappa)$-independent families, particular cases of which can be found in~\cite{VFDM1, VFDM2}, and define the density completeness number $\lambda_\calF$ associated to a $(\mu,\kappa)$-independent family $\calF$, as the completeness of the density ideal $\calI_\calF$.  We define $\lambda_\mu=\min\{\lambda_\calF:(\exists \kappa)\calF\hbox{ is a } (\mu,\kappa)\hbox{-dmif}\}$.  These cardinals must be rather large.
Our results in particular imply that if $\{\mu_i:i<\theta\}$ is a sequence of pairwise distinct uncountable cardinals with the property that for each $i$ there exists a $(\mu_i,\kappa)$-mif for some $\kappa$, then the $\mu_i$'s are separated by distinct weakly Mahlo cardinals.  

In Section \ref{section.fragile} we show that the property $\spec(\mu,\kappa)\neq\emptyset$ can not be decided by ZFC and large cardinals, addressing a question of \cite{VFDM2}. Indeed, by Theorems~\ref{fragile} and~\ref{indest} we have:

\begin{thm*}\hfill
	\begin{enumerate}
	\item If $\mu$ is regular and $\bbP$ is a nontrivial forcing either of size $<\mu$ or satisfying the $\nu$-c.c.\ for some $\nu<\mu$, then $\bbP$ forces $\spec(\mu,\kappa) = \emptyset$ for all $\kappa$.  
	\item If $\kappa$ is supercompact, then there is a forcing extension in which  for all $\kappa$-directed-closed posets $\bbP$ that force $2^\kappa < \aleph_\eta$, 
	where $\eta = (\kappa^{++})^{L(\p(\kappa))}$, $\bbP$ forces $\spec(\kappa,\kappa) \not= \emptyset$.
	\end{enumerate}
\end{thm*}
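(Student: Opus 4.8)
The statement is the conjunction of the two results labelled Theorems~\ref{fragile} and~\ref{indest}, and the plan is to prove them separately.

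\emph{Part (1) (fragility).} The plan is to argue by contradiction: suppose $\bbP$ forces $\spec(\mu,\kappa)\neq\emptyset$ for some $\kappa$. By the restriction result of Section~\ref{section.basic}, $\bbP$ then forces that there is a $(\mu,\kappa)$-dmif $\calF$; fix a generic $G$ and work in $V[G]$. From Section~\ref{section.density} one uses that the density ideal $\calI_\calF$ is $\mu$-complete (here $\mu$ is regular and, by Corollary~\ref{mu.kappa.cof}, $\cf(\kappa)\geq\mu$) and that the Boolean combinations $B_p$, $p\in\Fn(\calF,2,\mu)$, are dense in $\p(\kappa)/\calI_\calF$. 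Nontriviality of $\bbP$ is then exploited as follows: taking $\theta$ least so that $\bbP$ adds a new subset of $\theta$, the size or chain-condition hypothesis forces $\theta<\mu$, and the new set $d\subseteq\theta$ is \emph{fresh} over $V$, i.e.\ all its proper initial segments lie in $V$. The core of the argument is to turn $d$ into a set $Y\subseteq\kappa$ that meets and co-meets every Boolean combination of $\calF$: then $\calF\cup\{Y\}$ is again $(\mu,\kappa)$-independent, contradicting maximality of $\calF$; and since the argument is uniform in $\kappa$ it yields $\spec(\mu,\kappa)=\emptyset$ for all $\kappa$ simultaneously. For the construction I would fix distinct $\langle A_i:i<\kappa\rangle$ in $\calF$, regroup them into a $\theta$-indexed family of blocks each of length $\kappa$, and let membership of $\alpha$ in $Y$ be decided, inside the Boolean combination $B$ in which $\alpha$ lies, by the value of $d$ at the first block on which $B$ has not yet been made constant. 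The $\mu$-completeness of $\calI_\calF$ is exactly what guarantees that a $<\mu$-sized Boolean combination still meets, nontrivially, blocks cofinally often, so $Y$ is never constant below a combination. I expect the main obstacle to be making this encoding actually work: a $<\mu$-sized combination may pin down arbitrarily many reserved coordinates, so the blocks must be laid out along a scale of length $\kappa$ (using $\cf(\kappa)\geq\mu>\theta$) in such a way that no single combination exhausts the freshness of $d$. The $\nu$-c.c.\ case is treated in the same spirit, first extracting a fresh small object from a name via a $\Delta$-system refinement.

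\emph{Part (2) (indestructibility).} First, force (with a $<\kappa$-directed-closed poset) to obtain GCH on $[\kappa,\infty)$ while keeping $\kappa$ supercompact, and then apply the Laver preparation to reach a model $W_0$ in which $\kappa$ is supercompact and \emph{stays} supercompact under every $\kappa$-directed-closed forcing. Next, over $W_0$, force with a $<\kappa$-supported, $\kappa$-directed-closed iteration $\bbQ$ whose single steps shoot a Boolean combination of the family built so far either into a given $X\subseteq\kappa$ or into its complement, iterated long enough (length $\approx\aleph_\eta$) that, with the bookkeeping set up relative to $L(\p(\kappa))$, every subset of $\kappa$ arising in any $\kappa$-directed-closed extension meeting the constraint $2^\kappa<\aleph_\eta$ is anticipated. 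Put $W=W_0^{\bbQ}$ and let $\calA$ be the resulting family. Two things must be checked. (i) $\calA$ is $(\kappa,\kappa)$-independent, i.e.\ all $<\kappa$-sized Boolean combinations are nonempty of size $\kappa$; this can fail only at iteration stages of cofinality $<\kappa$, and there I would lift a supercompactness embedding $j:W\to M$ with $\crt(j)=\kappa$ through $\bbQ$ — using that $\kappa$ remains supercompact, so an appropriate master condition for $j(\bbQ)$ exists — and then read off from $M$ that the suspicious $<\kappa$-intersections are nonempty in $W$. (ii) $\calA$ is densely maximal in $W$, and remains a $(\kappa,\kappa)$-mif in $W^\bbP$ for every $\kappa$-directed-closed $\bbP$ forcing $2^\kappa<\aleph_\eta$: since such $\bbP$ adds no bounded subsets of $\kappa$, $(2^\kappa)^{W^\bbP}<\aleph_\eta$ caps the number of new subsets of $\kappa$, and by construction of the bookkeeping relative to $L(\p(\kappa))$ each of them was already dealt with at some stage of $\bbQ$, so $\calA$ captures all of them and nothing can be added. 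The principal difficulty is (i): arranging $\bbQ$ and the master-condition argument so that the lifted embedding genuinely certifies non-emptiness of the $<\kappa$-intersections produced along a $<\kappa$-support iteration — this is where the full strength of supercompactness, and its Laver indestructibility, are needed rather than mere measurability.
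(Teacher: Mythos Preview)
Your plan has a fatal orientation error. The hypothesis is that $\bbP$ \emph{forces} the existence of a $(\mu,\kappa)$-dmif $\calF$; thus $\calF$ lives in $V[G]$ and is maximal \emph{there}. Any set $Y$ you build from $d$ and $\calF$ (both in $V[G]$) is itself in $V[G]$, and maximality of $\calF$ in $V[G]$ says outright that no such $Y$ can extend $\calF$. So the construction you describe cannot succeed; whatever encoding you choose, there will be some Boolean combination $X_p$ with $X_p\subseteq Y$ or $X_p\cap Y=\emptyset$. The ``freshness of $d$ over $V$'' is irrelevant, because nothing in your argument takes place in $V$. This is not a technical wrinkle in the encoding but a structural problem: you are trying to disprove a statement about $V[G]$ by exhibiting a counterexample inside $V[G]$ to something $V[G]$ asserts is impossible.

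The paper's route is entirely different and worth internalising. From $\calF$ one gets in $V[G]$ a $\mu$-complete ideal $\calI$ on $\kappa$ with $\p(\kappa)/\calI$ equivalent to $\Add(\mu,\theta)$. One then \emph{pulls this back to $V$}: define $\calJ=\{X\in\p(\kappa)^V:1\Vdash_\bbP X\in\dot\calI\}$, and use Foreman's Duality Theorem to compare $\p(\kappa)/\calJ$ in $V$ with $\bbP*\p(\kappa)/\bar\calJ$ and with $\p(\kappa)/\calJ*j(\bbP)$. The contradiction comes from the $\mu$-approximation property of $\bbP*\dot{\Add}(\mu,\theta)$ (Usuba's theorem, using the strong $\mu$-c.c.\ hypothesis): a generic for the pulled-back ideal is $\mu$-approximated in $V$ and hence would have to lie in $V$. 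The case $\lambda_\calF=\mu$ is handled by a separate Hamkins-style argument. None of this is visible from your sketch.

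\textbf{Part (2).} Your approach is quite different from the paper's and has real gaps. The paper does \emph{not} build the family by an iteration that shoots Boolean combinations into prescribed sets. Instead it runs a Laver-style Easton iteration that, at inaccessible $\alpha$, forces with $f(\alpha)\times\Add(\alpha,|f(\alpha)|)$ for the guessed $\alpha$-directed-closed poset $f(\alpha)$. In the resulting model, after forcing with any $\kappa$-directed-closed $\bbQ$ as in the hypothesis, one lifts a supercompactness embedding and reads off a \emph{normal ideal} $\calI$ on $\kappa$ with $\p(\kappa)/\calI\cong\Add(\kappa,\lambda)$ where $\lambda=2^\kappa$; the dmif then comes for free from Lemma~\ref{converse}. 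The bound $\eta=(\kappa^{++})^{L(\p(\kappa))}$ is not a bookkeeping device at all: it arises because one must show that the factor map $k:N\to M$ between two ultrapowers has critical point above $\lambda$, and this requires the set of cardinals $\leq\lambda$ to have order-type below $\crt(k)$, which is guaranteed precisely when $\lambda<\aleph_\eta$.

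Your proposal, by contrast, hinges on a bookkeeping claim that is not justified: you assert that an iteration in $W_0$ can anticipate every subset of $\kappa$ arising in $W^\bbP$ for \emph{arbitrary} future $\kappa$-directed-closed $\bbP$. Since $\bbP$ is chosen after $W$ is fixed, it is unclear how any bookkeeping in $W_0$ could enumerate names for such subsets; the constraint $2^\kappa<\aleph_\eta$ bounds their number but says nothing about where they come from. The reference to $L(\p(\kappa))$ is unmotivated in your framework. Moreover, the ``shoot a Boolean combination into $X$ or $\kappa\setminus X$'' step is not specified, and preserving $(\kappa,\kappa)$-independence across a long $<\kappa$-support iteration is a genuine issue that a one-line appeal to lifting an embedding does not resolve.
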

 
Foreman's Duality Theorem (see \cite{Foreman}) plays central role in our study of the strong spectrum of independence in Section \ref{section.spectrum}. In particular we obtain (see Theorem~\ref{lowi}):


\begin{thm*}If $\kappa$ is measurable and $\mu<\kappa$ is regular, then there is a forcing extension in which
$\fri_{\mu}(\kappa)$ is a singular cardinal strictly below $2^\kappa$.
Thus, in particular, $$\kappa^+<\mathfrak{i}_{\mu}(\kappa)<2^\kappa.$$ 
\end{thm*}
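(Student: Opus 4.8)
\emph{Proof plan.}

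The plan is to fix in advance a singular cardinal $\lambda$ with $\kappa^+<\lambda$, to blow up $2^\kappa$ past $\lambda$, and to produce in the extension a $(\mu,\kappa)$-dmif of size exactly $\lambda$ whose dense maximality survives the blow-up, while arguing separately that $\lambda$ is a lower bound for every member of $\spec(\mu,\kappa)$. Concretely, one would start from $\kappa$ measurable with a normal ultrafilter $U$ and ultrapower $j\colon V\to M$, assume $\GCH$ above a suitable point (or perform the standard preparation), and first force with an iteration $\bbP$, of length and eventual size $\lambda$, that generically adds a $(\mu,\kappa)$-independent family $\calF=\{A_\xi:\xi<\lambda\}$ and, under bookkeeping, diagonalises it against potential localising sets so that $\calF$ becomes densely maximal, in the spirit of the sufficient condition of Lemma~\ref{converse}. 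Threading $j$ through $\bbP$ is what certifies that in $V^{\bbP}$ the density ideal $\calI_\calF$ of Section~\ref{section.density} is $\lambda_\mu$-complete and carries a generic elementary embedding; one also arranges the cardinal arithmetic below $\lambda_\mu$ along $\bbP$ so that the lower-bound threshold of Section~\ref{section.density} equals precisely $\lambda$, and $|\calF|=\lambda$.

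Next one forces with $\bbR=\Add(\kappa,\theta)$ for some $\theta>\lambda$ with $\cf(\theta)>\kappa$. Since $2^{<\kappa}=\kappa$ is preserved, $\bbR$ is $\kappa^+$-c.c.\ and ${<}\kappa$-closed, hence preserves all cardinals and cofinalities and forces $2^\kappa=\theta$. (Crucially, $\kappa$ need not remain measurable; only $\calF$ must survive.) The crux is that $\calF$ is still densely maximal in $V^{\bbP*\bbR}$, i.e.\ that no $X\subseteq\kappa$ added by $\bbR$ escapes the density ideal of $\calF$ computed there. This is exactly where Foreman's Duality Theorem enters: applying it to $\calI_\calF$ with respect to $\bbR$ — factoring $j_{\calI_\calF}(\bbR)$ as $\bbR$ followed by a sufficiently closed remainder over the generic ultrapower, lifting the generic embedding of $\calI_\calF$ through $\bbR$, and using a master condition — identifies the density ideal of $\calF$ in $V^{\bbP*\bbR}$ with the canonical extension $\overline{\calI_\calF}$ and shows it retains the $\lambda_\mu$-completeness and trapping properties that force dense maximality. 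Hence $\calF$ is a $(\mu,\kappa)$-mif of size $\lambda<\theta=2^\kappa$, so $\spec(\mu,\kappa)\ne\emptyset$ and $\fri_\mu(\kappa)\le\lambda<2^\kappa$.

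For minimality, one uses that (by Section~\ref{section.basic}) every $(\mu,\kappa)$-mif $\calG$ restricts to a $(\mu,\kappa)$-dmif $\calG'$, whose density ideal is $\lambda_\mu$-complete with the saturation degree forced by Section~\ref{section.density}. Combining this with the continuum pattern below $\lambda_\mu$ set up by $\bbP$ — which the ${<}\kappa$-closed, $\kappa^+$-c.c.\ forcing $\bbR$ does not disturb — a $\lambda_\mu$-complete density ideal of that saturation on an index set of size $|\calG'|\le|\calG|$ cannot exist unless $|\calG|\ge\lambda$. Thus $\fri_\mu(\kappa)\ge\lambda$, so $\fri_\mu(\kappa)=\lambda$ is singular; and since no maximal independent family has size $\le\kappa$ and $\kappa^+$ is regular, in particular $\kappa^+<\fri_\mu(\kappa)<2^\kappa$.

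The main obstacle will be the preservation step of the second paragraph — the application of Foreman's Duality in the iterated context. The delicate points are computing $j_{\calI_\calF}(\bbR)$ and the exact factorisation over the generic ultrapower (so that the remainder forcing is closed enough to add no subset of $\kappa$ spoiling density), producing the master condition, and re-verifying in $V^{\bbP*\bbR}$ that the lifted embedding still witnesses $\lambda_\mu$-completeness of $\overline{\calI_\calF}$; this is precisely the bookkeeping that Foreman's Duality is built to handle, but fitting its hypotheses to the present $\bbP$ is where the real work lies. A secondary difficulty is shaping the continuum function below $\lambda_\mu$ so that the Section~\ref{section.density} lower bound lands exactly on the prescribed singular $\lambda$ rather than on a smaller cardinal, and checking this shape is compatible with $\bbP$ adding $\calF$ of size $\lambda$ and with $\bbR$ preserving it.
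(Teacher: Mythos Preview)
Your preservation step fails outright, and not merely for the ``delicate'' reasons you anticipate. First, Foreman's Duality (Theorem~\ref{foreman}) requires the forcing to have chain condition below the completeness of the ideal; since $\calI_\calF$ is an ideal on $\kappa$ extending the bounded ideal, $\lambda_\calF\leq\kappa$, while $\bbR=\Add(\kappa,\theta)$ is only $\kappa^+$-c.c., so the hypothesis is not met. More fundamentally, no lifting argument can help: if $c\subseteq\kappa$ is $\Add(\kappa,1)$-generic over $V^\bbP$, then by a direct density argument $c$ splits every set in $\p(\kappa)^{V^\bbP}$ of size $\kappa$. Since each Boolean combination $X_p$ from $\calF$ lies in $V^\bbP$ and has size $\kappa$, the family $\calF\cup\{c\}$ is $(\mu,\kappa)$-independent in $V^{\bbP*\bbR}$, so $\calF$ is not maximal there. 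This is precisely the mechanism the paper itself uses to establish the \emph{minimality} direction, and it destroys any ground-model $(\mu,\kappa)$-mif under $\Add(\kappa,\theta)$. Your lower-bound paragraph inherits the same problem: you cannot appeal to properties of $\overline{\calI_\calF}$ in $V^{\bbP*\bbR}$ to bound $\fri_\mu(\kappa)$ from below once $\calF$ has ceased to be maximal.

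The paper's argument is entirely different and much shorter: it forces a single time with $\Add(\mu,\kappa^{+\omega})$ over a model in which $2^\kappa<\kappa^{+\omega}$. The key observation is that $\kappa^{+\omega}$ is a fixed point of the ultrapower map $j_\calU$ (since $j_\calU(\kappa^{+n})=\kappa^{+n}$ for all large enough $n$), so Foreman's Duality applied to the dual ideal of $\calU$ and the genuinely $\kappa$-c.c.\ forcing $\Add(\mu,\kappa^{+\omega})$ yields $\p(\kappa)/\bar\calJ\cong\Add(\mu,\kappa^{+\omega})$ directly; Lemma~\ref{converse} then produces a $(\mu,\kappa)$-dmif of size $\kappa^{+\omega}$. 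Minimality follows because any smaller $\mu$-free family lives in a proper sub-product and is split by a fresh $\mu$-Cohen coordinate. Crucially, $2^\kappa>\kappa^{+\omega}$ holds automatically from $\cf(2^\kappa)>\kappa$ --- there is no second forcing to blow up $2^\kappa$, and this is exactly where the singularity of the target value does the work.
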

 
To the best knowledge of the authors, this is the first example of a $(\mu,\kappa)$-mif of cardinality strictly between $\kappa^+$ and $2^\kappa$, for uncountable $\kappa$. The theorem answers a question of~\cite{VFDM2}.

It turns out that the situations in which $\mu<\lambda_\mu$ and in which $\mu=\lambda_\mu$ are quite different.  We study the latter case in Section~\ref{section.Sacks}, using the higher analogue of Sacks forcing as in~\cite{VFDM2} to obtain the consistency of $\fri_\kappa(\kappa)=\kappa^+<2^\kappa$ for a strongly inaccessible $\kappa$. Summarizing these results, we obtain the following information about the spectrum beyond the possible value of $\fri_\mu(\kappa)$:

\begin{thm*} Suppose GCH, $\kappa_0$ is supercompact, and $\kappa_1>\kappa_0$ is measurable. Then there are generic extensions in which:
	\begin{enumerate}
\item $|\spec(\kappa_0,\kappa_0)| \geq 2.$
\item $|\spec(\omega_1,\kappa_1)|\geq 2$.
		\item $\spec(\mu,\kappa_1)\neq\emptyset$ for at least two uncountable $\mu$.
	\end{enumerate}
\end{thm*}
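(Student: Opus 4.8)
The plan is to combine the three consistency-strength building blocks established earlier in the paper, engineering a single ground model and a single forcing iteration in which all three conclusions hold simultaneously. Since $\kappa_0$ is supercompact, I would first apply the preparation from the proof of Theorem~\ref{indest} to get, after a preliminary forcing, a model in which $\spec(\kappa_0,\kappa_0)\neq\emptyset$ is indestructible under $\kappa_0$-directed-closed posets forcing $2^{\kappa_0}<\aleph_\eta$. Because this preparation is of Laver-type, it can be arranged to preserve GCH above $\kappa_0$ and, crucially, to preserve the measurability of $\kappa_1$ (one arranges the supercompactness Laver function to be trivial above $\kappa_0$, or uses a Laver preparation that is small relative to $\kappa_1$ and hence preserves its measurability). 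After this, I would do a further $\kappa_0$-directed-closed forcing $\bbP_0$ below $\aleph_\eta$ that separates two values of $\spec(\kappa_0,\kappa_0)$, exactly as in the proof of item (1)-flavoured results from Section~\ref{section.Sacks}; this gives $|\spec(\kappa_0,\kappa_0)|\geq 2$ while leaving everything at and above $\kappa_1$ untouched.

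Next I would turn to $\kappa_1$. Working above $\kappa_0$ with $\kappa_1$ still measurable, I apply the constructions behind Theorem~\ref{lowi} and its refinement: using Foreman's Duality Theorem, force $\fri_{\omega_1}(\kappa_1)$ to be singular and strictly below $2^{\kappa_1}$, and arrange that $\kappa_1^+<\fri_{\omega_1}(\kappa_1)$ with at least two distinct cardinals realized as sizes of $(\omega_1,\kappa_1)$-mif's, yielding $|\spec(\omega_1,\kappa_1)|\geq 2$. The same Duality-theoretic machinery, applied with two different regular $\mu<\kappa_1$ (say $\mu=\omega_1$ and $\mu=\omega_2$), gives $(\mu,\kappa_1)$-mif's for two distinct uncountable $\mu$, establishing $\spec(\mu,\kappa_1)\neq\emptyset$ for at least two uncountable $\mu$. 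The key compatibility point is that the forcing that acts on $\kappa_1$ should be built over $\kappa_1$-directed-closed (hence not affecting $\kappa_0$ or the $(\kappa_0,\kappa_0)$-structure already set up), and conversely the earlier $\bbP_0$ stage was small enough not to disturb $\kappa_1$'s measurability.

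The main obstacle I expect is \emph{simultaneous} realizability: each individual conclusion requires a rather delicate forcing, and the two large cardinals $\kappa_0<\kappa_1$ must retain the properties each construction needs after the other construction has been carried out. Concretely, the Section~\ref{section.fragile} indestructibility preparation for $\kappa_0$ must not kill the measurability of $\kappa_1$, and the Section~\ref{section.spectrum} (Foreman Duality) forcing for $\kappa_1$ must be $\kappa_0$-directed-closed — or at least sufficiently mild — so that it falls within the indestructibility class secured at $\kappa_0$ and does not collapse the already-arranged two values in $\spec(\kappa_0,\kappa_0)$. Both requirements are achievable by the standard device of factoring the iteration as a product (or two-step iteration) $\bbP_0\times\bbP_1$ where $\bbP_0$ lives below $\kappa_1$ and $\bbP_1$ is $\kappa_1$-directed-closed (hence $\kappa_0$-directed-closed), invoking Easton-type and Levy--Solovay-type lemmas to check that each factor preserves what the other factor needs. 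Once this factorization is set up, each of the three items follows from the corresponding theorem cited in the introduction applied in the appropriate final model.
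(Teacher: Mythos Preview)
You have misread the statement: the theorem says ``there are generic extension\textbf{s}'' (plural), and in the paper this is explicitly presented as a \emph{summary} of three separate consistency results, each realized in its own extension.  Item~(1) is the second clause of Theorem~\ref{strong_indestructible}, item~(2) is Theorem~\ref{twoi} with $\mu=\omega_1$, and item~(3) is Theorem~\ref{at.least.two}.  No compatibility argument is needed, and the paper does not give one.

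Your attempt to obtain the three items simultaneously is not just unnecessary---it runs into genuine obstructions.  First, for item~(3) you propose taking $\mu=\omega_1$ and $\mu=\omega_2$.  This is impossible: by Corollary~\ref{separation} (or directly from Lemma~\ref{lem:gch}), if $\mu_0<\mu_1$ both admit dmif's then $\mu_1\geq\lambda_{\mu_0}$, which is weakly Mahlo; so $\mu_1$ cannot be $\omega_2$.  The paper's construction for~(3) uses $\mu=\kappa_0$ and $\mu=\kappa_0^+$, which works precisely because $\kappa_0$ is arranged to be the relevant density completeness number.  Second, and more seriously, the forcing behind item~(2) is $\Add(\omega_1,\theta)$, which is $(2^{\aleph_0})^+$-c.c.; by Theorem~\ref{fragile} this outright destroys $\spec(\kappa_0,\kappa_0)$ for any $\kappa_0>\omega_1$.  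So items~(1) and~(2) cannot be obtained in the same model by the methods you outline, and your claim that the $\kappa_1$-side forcing can be taken ``$\kappa_1$-directed-closed (hence $\kappa_0$-directed-closed)'' is false for the relevant forcings.
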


In Section~\ref{section.restrictions}, we study restrictions on the strong spectrum of independence, building on techniques of Gitik and Shelah~\cite{gs2}.  We show that if $\mu$ is uncountable but below the first weakly Mahlo cardinal, then $\fri_\mu(\kappa) \geq \min\{ 2^{\lambda_\mu},\lambda_\mu^{+\omega} \}$ (see Theorem~\ref{specrest}).  This contrasts with the situation in which $\fri_\kappa(\kappa)$ is defined, for there it is possible that $\kappa = \lambda_\kappa$ and $\fri_\kappa(\kappa) = \kappa^+<\kappa^{+n} = 2^\kappa$ for any finite $n \geq 2$.  We conclude the paper with interesting remaining open questions in Section~\ref{section.questions}.

\section{Strong Independence}
\label{section.basic}

\begin{defi}\label{def.ind.fam}	
Let $\mu$ be a cardinal.  A family of sets $\calF\subseteq\p(S)$ is called \emph{$\mu$-free} if for all disjoint $s_0,s_1 \subseteq [\calF]^{<\mu}$, $\bigcap s_0 \setminus \bigcup s_1$ is nonempty.  
\end{defi}

We use the convention in this context that $\bigcap \emptyset = S$.  For a partial function $p : \calF \to 2$ defined at $<\mu$-many points, we let $X_p$ denote the Boolean combination coded by $p$, 
$$X_p = \bigcap \{ X\in\calF : p(X) = 1 \} \setminus \bigcup \{ X\in\calF : p(X) = 0 \}.$$  
Using a bijection between $\calF$ and its cardinality, these Boolean combinations are coded conveniently by conditions in the Cohen forcing $\Add(\mu,|\calF|)$.  If $\omega\leq\mu\leq |\calF|$ and $\calF \subseteq\p(S)$ is a $\mu$-free family, then clearly $|\calF|^{<\mu} \leq 2^{|S|}$.

\begin{defi}\label{def.max.ind} \hfill
\begin{enumerate} 
\item A family $\calF \subseteq \p(S)$ is said to be \emph{$(\mu,\kappa)$-independent} when it is $\mu$-free and for all $p \in \Add(\mu,|\calF|)$, $|X_p| = \kappa$. 
\item A family $\calF \subseteq \p(S)$ is a \emph{$(\mu,\kappa)$-maximal indepedent family}, or \emph{$(\mu,\kappa)$-mif}, if it is $(\mu,\kappa)$-independent and cannot be extended to a strictly larger $\mu$-free family $\calF' \subseteq \p(S)$. 
 \item For cardinals $\mu,\kappa$, let the \emph{$(\mu,\kappa)$-independence spectrum}, or $\mathfrak{sp}_\fri(\mu,\kappa)$, be the set of all cardinalities of $(\mu,\kappa)$-mif's.  
 \end{enumerate} 
 \end{defi}
 
 To avoid some pathologies, we only concern ourselves with the situation in which both $\mu$ and $\kappa$ are infinite and $\mu$ is regular, which we will assume implicitly from here on.  

 \begin{defi} \hfill
 \begin{enumerate}
 \item When $\mathfrak{sp}_\fri(\mu,\kappa)$ is nonempty, we let  $\fri_\mu(\kappa)$ denote its minimal element.   
 \item The \emph{independence number for $\kappa$}, $\fri(\kappa)$, is $\fri_\omega(\kappa)$.  
 \item The \emph{$\sigma$-independence number for $\kappa$}, $\fri_\sigma(\kappa)$, is $\fri_{\omega_1}(\kappa)$.  
 \item The \emph{strong independence number for $\kappa$}, $\fri_s(\kappa)$, is $\fri_\kappa(\kappa)$.
 \end{enumerate}
 \end{defi}

The cardinal invariant $\fri(\ka)$ was introduced in~\cite{VFDM2}  for the special case of $\ka$ regular uncountable.  For every regular uncountable cardinal $\kappa$, the Axiom of Choice implies the existence of an $(\omega,\kappa)$-mif and so $\mathfrak{i}(\kappa)$ is defined, and $\kappa^+\leq \mathfrak{i}(\kappa)\leq 2^\kappa$. On the other hand, as some of the main results of this paper indicate, the existence of a $(\mu,\kappa)$-mif for an infinite $\mu$ is not guaranteed.  We continue with examining some necessary conditions for the existence of $(\mu,\kappa)$-mif's.


\begin{prop}
\label{ineq2} Suppose $\mathfrak{sp}_\fri(\mu,\kappa) \not=\emptyset$. Then $\mu\leq 2^{<\mu} \leq \kappa<\fri_\mu(\kappa)^{<\mu}\leq 2^\kappa$.
\end{prop}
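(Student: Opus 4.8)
The plan is to establish the chain $\mu \leq 2^{<\mu} \leq \kappa < \fri_\mu(\kappa)^{<\mu} \leq 2^\kappa$ in pieces, using only elementary counting arguments applied to an arbitrary $(\mu,\kappa)$-mif $\calF \subseteq \p(S)$ witnessing $\mathfrak{sp}_\fri(\mu,\kappa) \not= \emptyset$. Note first that $\mu \leq 2^{<\mu}$ is automatic (it holds for every infinite cardinal $\mu$, since $2^{<\mu} \geq 2^1 = 2$ and more to the point $2^{<\mu} \geq \mu$ because $2^{<\mu}$ is a supremum of cardinals cofinal in... more simply, $2^{<\mu} \geq \sup_{\nu<\mu} 2^\nu \geq \sup_{\nu<\mu}\nu^+ = \mu$). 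So the content is in the remaining three inequalities.

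For $2^{<\mu} \leq \kappa$: take any $\mu$-free family $\calF$ that is a $(\mu,\kappa)$-mif. Since $\calF$ is $(\mu,\kappa)$-independent, every Boolean combination $X_p$ for $p \in \Add(\mu,|\calF|)$ has size exactly $\kappa$; in particular, taking $p$ with $\dom(p)$ a fixed set of $\nu < \mu$ coordinates and letting $p$ range over all $2^\nu$ such conditions, the sets $X_p$ are pairwise disjoint nonempty subsets of $S$. Fixing witnesses $x_p \in X_p$ inside a single $X_q$ of size $\kappa$ — more carefully, restrict attention to the $\mu$-free family $\{X \cap X_q : X \in \calF\}$ on the set $X_q$ of size $\kappa$, which is again $(\mu,\kappa')$-independent for suitable $\kappa'$ — one sees that $2^{<\mu}$ pairwise disjoint nonempty sets fit inside a set of size $\kappa$, giving $2^{<\mu} \leq \kappa$. (Alternatively and more directly: the observation already recorded in the excerpt, that $|\calF|^{<\mu} \leq 2^{|S|}$ when $\calF$ is $\mu$-free, combined with the fact that $|\calF| \geq \mu$ — which must be argued, e.g.\ a $(\mu,\kappa)$-mif cannot have fewer than $\mu$ members without the $\mu$-freeness condition being vacuous and maximality failing — yields the relevant bounds, but the disjoint-sets argument is cleanest for $2^{<\mu}\le\kappa$.)

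For $\kappa < \fri_\mu(\kappa)^{<\mu}$, i.e.\ $\kappa < |\calF|^{<\mu}$ for the minimal-size mif: suppose toward a contradiction that $|\calF|^{<\mu} \leq \kappa$. The number of Boolean combinations $X_p$, $p \in \Add(\mu, |\calF|)$, is $|\calF|^{<\mu} \leq \kappa$. Now pick any $Y \in \p(S) \setminus \calF$; I want to show $\calF \cup \{Y\}$ is still $\mu$-free, contradicting maximality. Given disjoint $s_0, s_1 \in [\calF \cup \{Y\}]^{<\mu}$, the problematic case is $Y \in s_0$ (the case $Y \in s_1$ is symmetric): we need $\bigcap(s_0 \setminus \{Y\}) \cap Y \setminus \bigcup s_1 \not= \emptyset$, and we know $X_p := \bigcap(s_0\setminus\{Y\}) \setminus \bigcup s_1$ has size $\kappa$. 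The point is to choose $Y$ so that $Y \cap X_p \not= \emptyset$ for every such $p$; since there are only $\leq \kappa$ many relevant $X_p$'s and each has size $\kappa$, a diagonalization (pick one point from each $X_p$ — using that each $X_p$ has $\kappa$ distinct points, so we can pick points "freshly", or just pick any point and note $Y$ can be taken to be the full set of these $\le\kappa$ points together with their complementary needs) produces such a $Y \notin \calF$, since $|\p(S)| = 2^\kappa > \kappa \geq |\calF|$ leaves room to avoid $\calF$. This contradicts maximality, so $\kappa < |\calF|^{<\mu}$.

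Finally, $\fri_\mu(\kappa)^{<\mu} \leq 2^\kappa$ is just the displayed remark in the excerpt: $\calF$ is $\mu$-free with $\calF \subseteq \p(S)$ and, as will have been established, $|S|$ can be taken to be $\kappa$ (a $(\mu,\kappa)$-mif lives on a set we may assume has size $\kappa$, since all Boolean combinations have size $\kappa$ and in particular $|S| = |X_\emptyset| = \kappa$), whence $|\calF|^{<\mu} \leq 2^{|S|} = 2^\kappa$. The main obstacle is the middle inequality $\kappa < \fri_\mu(\kappa)^{<\mu}$: one must be careful that the diagonalization producing the new set $Y$ genuinely escapes $\calF$ and genuinely meets every relevant Boolean combination, which requires bookkeeping over all $\leq \kappa$ conditions $p$ simultaneously while only adding a single set $Y$ — the resolution is that each individual requirement "$Y \cap X_p \not=\emptyset$ and $(S\setminus Y)\cap X_p \not=\emptyset$" is easy and there are few enough of them, and distinctness from members of $\calF$ follows from $|\calF| \le \kappa < 2^\kappa$.
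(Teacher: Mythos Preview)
Your proposal is correct and follows essentially the same approach as the paper: counting Boolean combinations for the outer inequalities, and for the strict inequality $\kappa < |\calF|^{<\mu}$, constructing a set $Y \notin \calF$ that meets and avoids every $X_p$ to contradict maximality. The only cosmetic difference is that the paper obtains $Y$ by invoking the Disjoint Refinement Lemma (finding pairwise disjoint $Y_p \subseteq X_p$ of size $\kappa$ and setting $Y = \bigcup_p Y_p^0$ for a splitting $Y_p = Y_p^0 \sqcup Y_p^1$), whereas you diagonalize directly---the same idea in slightly different packaging.
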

\begin{proof}
First that if $\calF$ is a $(\mu,\kappa)$-mif, then every distinct partial function $p : \calF \to 2$ of size $<\mu$ determines a distinct subset of $\kappa$, so $\fri_\mu(\kappa)^{<\mu}\leq 2^\kappa$.

Let us show $\kappa<\fri_\mu(\kappa)^{<\mu}$.  Suppose $\calF\subseteq\p(\kappa)$ is $(\mu,\kappa)$-independent, and $|\calF|^{<\mu} \leq \kappa$.  By the Disjoint Refinement Lemma (see~\cite{BBPSPV}) we can find a pairwise disjoint collection $\{ Y_p : p \in \Add(\mu,|\calF|)\}$, where each $Y_p$ is contained in $X_p$ and has size $\kappa$.  Then we can partition each $Y_p$ into two disjoint sets of the same cardinality, $Y_p^0,Y_p^1$.  Let $Y^0 = \bigcup \{ Y_p^0 : p \in \Add(\mu,|\calF|) \}$.  Then $\calF \cup \{ Y^0 \}$ is a strictly larger $(\mu,\kappa)$-independent family.

To see $2^{<\mu} \leq \kappa$, let $\calF\subseteq\p(\kappa)$ be a $(\mu,\kappa)$-independent family of size $<\mu$.  The collection $\{ X_p : p \in \,^\calF 2\}$ is a partition of $\kappa$, so $2^{|\calF|}\leq\kappa$.  Since each $X_p$ has size $\kappa$, we can choose a set $Y$ such that for each $p : \calF \to 2$, $X_p \cap Y$ and $X_p \setminus Y$ both have size $\kappa$.  Then $\calF \cup \{ Y \}$ is a strictly larger $(\mu,\kappa)$-independent family, and $\calF$ is not maximal.  Therefore, $2^\alpha \leq\kappa$ for each $\alpha<\mu$.
\end{proof}

Note that if $\mu$ is regular, then the inequality $2^{<\mu} < \fri_\mu(\kappa)^{<\mu}$ implies $\mu<\fri_\mu(\kappa)$.



Suppose $\calF\subseteq\p(S)$ is a $\mu$-free family.  An equivalent way of stating that $\calF$ is maximal is to say that for all $A\subseteq S$, there is $p \in \Add(\mu,|\calF|)$ such that either $X_p \cap A = \emptyset$ or $X_p \subseteq A$.  We say that $\calF$ is \emph{densely maximal} when for all $p \in \Add(\mu,|\calF|)$ and all $A \subseteq X_p$, there is $q \leq p$ such that either $X_q \cap A = \emptyset$ or $X_q \subseteq A$. The notion of dense maximality was studied in~\cite{VFDM1} in the countable case and introduced for higher independent families in~\cite{VFDM2}.  A $(\mu,\kappa)$-mif that is moreover densely maximal will be called a \emph{$(\mu,\kappa)$-densely maximal independent family}, or $(\mu,\kappa)$-dmif.


\begin{lem}[Kunen,~\cite{Kunen}]
\label{lem:ideal}
Suppose there is a maximal $\mu$-free family of size $\theta\geq\mu$.  Then for some $\kappa$, there is a $(\mu,\kappa)$-dmif $\calF \subseteq \p(\kappa)$, and $\theta \in \spec(\mu,\kappa)$.  Furthermore, whenever $\theta \in \spec(\mu,\kappa)$, there is a uniform $\mu$-complete ideal $\calI$ on $\kappa$ and a dense embedding $e : \Add(\mu,\theta) \to \p(\kappa)/\calI$.
\end{lem}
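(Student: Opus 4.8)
The organizing notion is the \emph{density ideal}, and the argument is essentially Kunen's. Fix a bijection between a maximal $\mu$-free family $\calF \subseteq \p(S)$ of size $\theta \geq \mu$ and $\theta$, and set $\bbP = \Add(\mu,\theta)$; recall that conditions $p \in \bbP$ code Boolean combinations $X_p$ with $p \leq q \Rightarrow X_p \subseteq X_q$, with $X_p \cap X_q = \emptyset$ when $p \perp q$, with $X_{p\cup q} = X_p \cap X_q$ when $p,q$ are compatible, and (by $\mu$-freeness) with $X_p \neq \emptyset$ always. For $A \subseteq S$ put $D_A = \{p \in \bbP : X_p \cap A = \emptyset\}$ and $\calI_\calF = \{A \subseteq S : D_A \text{ is dense in } \bbP\}$. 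Two things are to be proved: (i) $\calF$ can be localized to a $(\mu,\kappa)$-dmif of size $\theta$ for some $\kappa$; (ii) given any witness to $\theta \in \spec(\mu,\kappa)$, its density ideal is the $\calI$ we want and $p \mapsto [X_p]_\calI$ is the dense embedding.

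For (i) I would first produce $p$ such that $\calF$ is ``densely maximal below $p$'', i.e. such that no $q \leq p$ carries a set $A \subseteq X_q$ with $X_r \cap A$ and $X_r \setminus A$ both nonempty for every $r \leq q$ (say such $A$ \emph{splits} each such $X_r$); equivalently $\calF \restr X_p := \{X \cap X_p : X \in \calF \setminus \dom p\}$ is densely maximal on $X_p$. If no such $p$ existed, the set of ``bad'' $q$ (those carrying a splitting $A_q$) would be dense; choose a maximal antichain $\{q_i\}$ of bad conditions — maximal in \emph{all} of $\bbP$ — and let $B = \bigcup_i A_{q_i}$. Pairwise disjointness of the $X_{q_i}$ gives $B \cap X_{q_i} = A_{q_i}$, and any $r \in \bbP$ is compatible with some $q_i$, so $X_{r \cup q_i} \subseteq X_{q_i}$ is split by $A_{q_i}$, hence by $B$, hence $X_r \supseteq X_{r\cup q_i}$ is split by $B$. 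Then no $X_r$ is contained in, or disjoint from, $B$, contradicting the maximality of $\calF$ in the form recalled before the lemma. Fixing such a $p$, let $\kappa = \min\{|X_q| : q \leq p\}$, witnessed by $q_0 \leq p$; then $|X_q| = \kappa$ for all $q \leq q_0$, and $\kappa$ is infinite ($\calF$ infinite and $\mu$-free forces $|X_q| \geq 2^n$ for all $n$). Now $\calF \restr X_{q_0}$ is a $(\mu,\kappa)$-dmif: its Boolean combinations are exactly the $X_q$, $q \leq q_0$, all of size $\kappa$; it is densely maximal since $q_0 \leq p$; and it has size $\theta$, since $\theta \geq \mu$ is infinite, $|\dom q_0| < \mu$, and $\mu$-freeness keeps the restrictions $X \cap X_{q_0}$ pairwise distinct. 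Identifying $X_{q_0}$ with $\kappa$ yields $\theta \in \spec(\mu,\kappa)$ and the first assertion.

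For (ii), let $\calF$ be a $(\mu,\kappa)$-mif on $\kappa$ of size $\theta$. Apply the previous paragraph — the cardinality step is now vacuous, every Boolean combination of $\calF$ having size $\kappa$ — to get $p^*$ with $\calF$ densely maximal below $p^*$, and transport along a bijection $X_{p^*} \leftrightarrow \kappa$ so as to replace $\calF$ by a $(\mu,\kappa)$-dmif on $\kappa$ of size $\theta$; $\bbP$ is unchanged. Set $\calI = \calI_\calF$. That $\calI$ is a $\mu$-complete ideal is routine: downward closure is immediate, and for a $<\mu$-indexed union of members one builds, below any given condition, a decreasing sequence $\langle p_\alpha : \alpha \leq \beta \rangle$ with $X_{p_{\alpha+1}} \cap A_\alpha = \emptyset$, taking unions at limits (a condition, since $\mu$ is regular). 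It is proper and uniform: $X_p \notin \calI$ for every $p$ (no $q \leq p$ lies in $D_{X_p}$), so in particular $\kappa \notin \calI$; and if $\alpha \in X_p$, dense maximality at $p$ and $\{\alpha\}$ gives $q \leq p$ with $\alpha \notin X_q$ (since $X_q \subseteq \{\alpha\}$ is impossible), so $D_{\{\alpha\}}$ is dense. Finally $e(p) := [X_p]_\calI$ is monotone; $p \perp q$ gives $X_p \cap X_q = \emptyset$, so $e(p) \perp e(q)$; $p \compat q$ gives $X_{p\cup q} = X_p \cap X_q \notin \calI$, so $e(p) \compat e(q)$; and $e$ has dense range, because $A \notin \calI$ means $D_A$ is not dense, so some $p_0$ has $X_q \cap A \neq \emptyset$ for all $q \leq p_0$, and dense maximality at $p_0$ and $A \cap X_{p_0}$ forces $q \leq p_0$ with $X_q \subseteq A$, giving $0 \neq e(q) \leq [A]$. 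Hence $e$ is a dense embedding.

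The crux is the localization step in (i): replacing maximality by dense maximality. Mere maximality is not enough for the density ideal to work — the dense-range verification uses that $A \cap X_{p_0}$ can be refined \emph{into} $A$ below $p_0$, which is exactly dense maximality. The only delicate point in the splitting-set construction is that the antichain of bad conditions must be maximal in $\bbP$ as a whole, not merely below some auxiliary condition: otherwise $B$ could be ``decided'' by a condition off to the side and would fail to contradict maximality of $\calF$. The rest — the ideal properties and the embedding clauses — is routine verification.
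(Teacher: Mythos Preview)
Your proof is correct and follows essentially the same route as the paper: the antichain-of-bad-conditions argument to localize to a densely maximal restriction, then the density ideal and dense maximality to verify that $p \mapsto [X_p]_\calI$ is a dense embedding. One minor point: your explicit verification of ``uniform'' only shows $\{\alpha\} \in \calI$, which with $\mu$-completeness gives merely $[\kappa]^{<\mu} \subseteq \calI$; full uniformity ($[\kappa]^{<\kappa} \subseteq \calI$) follows instead from the dense-range property of $e$ that you establish afterward, since any $\calI$-positive set must then contain some $X_q$ of size $\kappa$ --- which is exactly how the paper deduces it.
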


\begin{proof}
Suppose $\calF = \{ Y_\alpha : \alpha < \theta \}$ is a maximal $\mu$-free family.  First we claim that for some $p^* \in \Add(\mu,\theta)$, the collection $\{ X_{p^*} \cap Y_\alpha : \alpha \notin \dom p^* \}$ is a densely maximal $\mu$-free family of subsets of $X_{p^*}$.  If this fails, then for all $p \in \Add(\mu,\theta)$, there is $q \leq p$ and $B_q \subseteq X_q$ such that no $r \leq q$ has the property that either $X_r \cap B_q = \emptyset$ or $X_r \subseteq B_q$.  Let $\calA \subseteq \Add(\mu,\theta)$ be a maximal antichain of such $q$ and let $B = \bigcup_{q \in \calA} B_q$.  By the maximality of $\calF$, there is some $p \in \Add(\mu,\theta)$ such that either $X_p \cap B = \emptyset$ or $X_p \subseteq B$.  There is $r \leq p,q$ for some $q \in \calA$, and by construction, both $X_r \cap B_q$ and $X_r \setminus B_q$ are nonempty.  Noting that $X_r \cap B_q \subseteq X_p \cap B$ and $X_r \setminus B_q= X_r \setminus B \subseteq X_p \setminus B$, we have a contradiction.  Thus let $p^* \in \Add(\mu,\theta)$ be as claimed, and let $\kappa = \min\{ |X_q| : q \leq p^* \}$.  Then for some $X_q$ of size $\kappa$, there is a family of $\theta$-many subsets of $X_q$ that is densely maximal and $(\mu,\kappa)$-independent. 

Now suppose we are given a $(\mu,\kappa)$-dmif $\calF\subseteq\p(\kappa)$.
Let $\calI$ be the collection of $Y \subseteq \kappa$ such that for dense set of $p \in \Add(\mu,\theta)$, $X_p \cap Y =\emptyset$.
 Since the intersection of less that $\mu$-many dense open subsets of $\Add(\mu,\theta)$ is dense, $\calI$ is a $\mu$-complete ideal.
 If $Y \subseteq \kappa$ is $\calI$-positive, there is $p \in \Add(\mu,\theta)$ such that for all $q \leq p$, $X_q \cap Y \not=\emptyset$.  By dense maximality, there is $r \leq p$ such that $X_r \subseteq Y$.  This shows that the map $p \mapsto [X_p]_{\calI}$ is a dense embedding of $\Add(\mu,\theta)$ into $\p(\kappa)/\calI$.  Since every $X_p$ has size $\kappa$, $\calI$ contains $[\kappa]^{<\kappa}$.
 \end{proof}

\begin{cor}\label{mu.kappa.cof}
If $\mathfrak{sp}_\fri(\mu,\kappa) \not=\emptyset$, then $\cf(\kappa) \geq \mu$.
\end{cor}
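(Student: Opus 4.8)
The plan is to derive this immediately from the structural information packaged in Lemma~\ref{lem:ideal}. Suppose $\mathfrak{sp}_\fri(\mu,\kappa)\neq\emptyset$, say $\theta\in\spec(\mu,\kappa)$. By the second part of Lemma~\ref{lem:ideal}, there is a uniform $\mu$-complete ideal $\calI$ on $\kappa$ with a dense embedding $e:\Add(\mu,\theta)\to\p(\kappa)/\calI$; in particular $\calI\supseteq[\kappa]^{<\kappa}$, and $\calI$ is proper (that is, $\kappa\notin\calI$), since $\p(\kappa)/\calI$ carries a nontrivial forcing and so cannot be the trivial Boolean algebra. These are the only two features of $\calI$ I will use.

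Now I would argue by contradiction: assume $\cf(\kappa)<\mu$. Then we may write $\kappa=\bigcup_{i<\delta}\kappa_i$ for some $\delta=\cf(\kappa)<\mu$ and some increasing sequence of ordinals $\kappa_i<\kappa$. Each initial segment $\kappa_i$, viewed as a subset of $\kappa$, lies in $[\kappa]^{<\kappa}$, hence in $\calI$. Since $\delta<\mu$ and $\calI$ is $\mu$-complete, the union $\kappa=\bigcup_{i<\delta}\kappa_i$ belongs to $\calI$, contradicting properness of $\calI$. Therefore $\cf(\kappa)\geq\mu$.

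There is essentially no obstacle in this deduction once Lemma~\ref{lem:ideal} is in hand; all the real work has been done there in extracting a densely maximal family and the associated $\mu$-complete uniform ideal. The only point worth a line of care is the justification that $\calI$ is proper, which follows because $\Add(\mu,\theta)$ is a nontrivial (separative) forcing densely embedding into $\p(\kappa)/\calI$, so the latter has a condition below $[\kappa]_{\calI}$, forcing $\kappa\notin\calI$. One could alternatively phrase the contradiction directly at the level of the forcing $\p(\kappa)/\calI$, but the ideal-theoretic formulation above is the cleanest.
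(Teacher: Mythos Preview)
Your proof is correct and follows the same approach as the paper: both invoke Lemma~\ref{lem:ideal} to obtain a uniform $\mu$-complete ideal on $\kappa$, from which $\cf(\kappa)\geq\mu$ follows immediately. You have simply spelled out the standard argument (a union of fewer than $\mu$ bounded sets cannot cover $\kappa$) that the paper leaves implicit.
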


\begin{proof}
If $\mathfrak{sp}_\fri(\mu,\kappa) \not=\emptyset$, then there is a $\mu$-complete uniform ideal on $\kappa$.
\end{proof}

\section{Density ideals and density completeness numbers}
\label{section.density}

This brings us to the following definition:

\begin{defi}\label{def.density.numbers}\hfill
  \begin{enumerate}
  \item Let $\calF \subseteq \p(\kappa)$ be a $\mu$-free family. Then, the collection  $\calI_{\calF}$  of all $Y \subseteq \kappa$ such that for dense set of $p \in \Add(\mu,|\calF|)$, $X_p \cap Y = \emptyset$ is referred to as the \emph{density ideal} of $\calF$ and $\lambda_{\calF}$ denotes the completeness of $\calI_{\calF}$.  Note that $\lambda_\calF \geq\mu$ by the distributivity of $\Add(\mu,|\calF|)$.
  \item If $\mu$ is such that for some $\kappa$, there exists a $(\mu,\kappa)$-dmif $\calF$, let $\lambda_\mu$ be the minimal value of $\lambda_{\calF}$ for such a family. We refer to $\lambda_\calF$ as the density completeness number of $\calF$ and to $\lambda_\mu$ as the density completeness number at $\mu$. 
 \end{enumerate}
\end{defi}

The notion of a density ideal for independent families on $\omega$ has been introduced in~\cite{VFDM1} and a variant of it for $(\omega,\kappa)$-maximal independent families in~\cite{VFDM2}. In both of those cases, the density ideal plays an important role in obtaining maximal independent families which are indestructible by Sacks forcing (in the former case) and the higher analogue of Sacks forcing (in the latter). 


The following gives a partial converse of Lemma \ref{lem:ideal}.  The key extra assumption is (\ref{cofinalJ}).  

\begin{lem}
\label{converse}
Suppose $\mu,\kappa,\theta$ are cardinals and $\calI$ is a $\mu$-complete ideal on $\kappa$ such that:
\begin{enumerate}
\item $\calI$ contains $[\kappa]^{<\kappa}$.
\item There is a dense embedding $e : \Add(\mu,\theta)\to\p(\kappa)/\calI$.
\item \label{cofinalJ} There is $\calJ \subseteq \calI$ that is $\subseteq$-cofinal in $\calI$, and $|\calJ| \leq \theta$.
\end{enumerate}
Then $\theta \in \mathfrak{sp}_\fri(\mu,\kappa)$.  Futhermore, $\calI = \calI_\calF$ for some $(\mu,\kappa)$-dmif $\calF$ of size $\theta$.
\end{lem}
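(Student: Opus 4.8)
The plan is to build the family $\calF$ directly by pulling back a generic-like filter through the dense embedding $e$. First I would fix a bijection $\langle p_\xi : \xi < \theta\rangle$ enumerating $\Add(\mu,\theta)$, and also fix, using hypothesis~(\ref{cofinalJ}), a $\subseteq$-cofinal sequence $\langle J_\xi : \xi < \theta\rangle$ in $\calI$. For each $\alpha<\theta$, the $\alpha$-th coordinate function $c_\alpha : \Add(\mu,\theta)\to 2$, defined on the dense set $\{p : \alpha\in\dom p\}$ by $c_\alpha(p) = p(\alpha)$, is a "name" for a subset of $\kappa$ modulo $\calI$ via $e$; concretely, I would choose $Y_\alpha\subseteq\kappa$ so that $[Y_\alpha]_\calI$ is the supremum in $\p(\kappa)/\calI$ of $\{e(p) : p\in\Add(\mu,\theta),\ p(\alpha)=1\}$, equivalently $Y_\alpha$ is the "interpretation" of the $\alpha$-th Cohen subset. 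Set $\calF = \{Y_\alpha : \alpha<\theta\}$. The point is that for $p\in\Add(\mu,\theta)$ the Boolean combination $X_p^\calF = \bigcap\{Y_\alpha : p(\alpha)=1\}\setminus\bigcup\{Y_\alpha : p(\alpha)=0\}$ should satisfy $[X_p^\calF]_\calI = e(p)$; this is where $\mu$-completeness of $\calI$ and the fact that $e$ is a dense embedding of a $\mu$-distributive (in fact $\mu$-Cohen) poset get used, to push the sup/inf computations through intersections of fewer than $\mu$ many sets.

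The next steps are to verify the three required properties of $\calF$. For $\mu$-freeness and $(\mu,\kappa)$-independence: since $e$ is a dense embedding into $\p(\kappa)/\calI$ and $\calI\supseteq[\kappa]^{<\kappa}$, every $e(p)$ is nonzero, so $X_p^\calF\notin\calI$, hence $|X_p^\calF| = \kappa$ (it is $\calI$-positive and $\calI$ contains all bounded sets, so it has size $\kappa$). For maximality, and in fact dense maximality: given $p$ and $A\subseteq X_p^\calF$, consider $[A]_\calI$ in $\p(\kappa)/\calI$. If $[A]_\calI = 0$ then $A\in\calI$; using a cofinal $J_\xi\supseteq A$ one argues there is $q\le p$ with $X_q^\calF\cap A=\emptyset$ — here is exactly where hypothesis~(\ref{cofinalJ}) is essential, because $A\in\calI$ only gives a dense set of $q$ with $X_q^\calF$ avoiding $A$ "in the ideal sense", and to get genuine disjointness on a single $X_q^\calF$ one needs $A$ to be below one of the $\theta$-many fixed generators, so that avoiding that generator is itself a condition. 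If $[A]_\calI\neq 0$, then by density of $e$ there is $q$ with $e(q)\le[A]_\calI$ and $e(q)\le e(p)$, so we may take $q\le p$; then $X_q^\calF\setminus A\in\calI$, and again using cofinality we can shrink to $r\le q$ with $X_r^\calF\subseteq A$. This gives dense maximality, hence $\calF$ is a $(\mu,\kappa)$-dmif of size $\theta$, so $\theta\in\mathfrak{sp}_\fri(\mu,\kappa)$.

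Finally I would check $\calI = \calI_\calF$. The inclusion $\calI_\calF\subseteq\calI$: if $Y$ is such that $\{p : X_p^\calF\cap Y=\emptyset\}$ is dense, then $\{e(p) : X_p^\calF\cap Y=\emptyset\}$ is predense below... and since each such $e(p)$ is disjoint from $[Y]_\calI$ while the $e(p)$ are dense, $[Y]_\calI = 0$. Conversely if $Y\in\calI$, pick $J_\xi\supseteq Y$; the set of $q$ with $X_q^\calF\cap J_\xi=\emptyset$ is dense (by the dense-maximality argument applied to $A = J_\xi\cap X_p^\calF$, which is in $\calI$), and for such $q$, $X_q^\calF\cap Y=\emptyset$, so $Y\in\calI_\calF$. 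I expect the main obstacle to be the first step — rigorously showing $[X_p^\calF]_\calI = e(p)$, i.e. that the naive "interpretation" of the Cohen names behaves correctly under Boolean combinations of size $<\mu$; this requires care with the $\mu$-completeness of $\calI$ (to compute a $<\mu$-intersection of the $[Y_\alpha]$'s as an infimum that is again realized by the expected set) and with the structure of $\Add(\mu,\theta)$ as a product of $\theta$ copies of $\Add(\mu,1)$, together with hypothesis~(\ref{cofinalJ}) to control the "error terms" lying in $\calI$.
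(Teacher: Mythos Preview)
Your plan has a genuine gap in the dense-maximality step. You correctly identify that hypothesis~(\ref{cofinalJ}) is needed precisely to upgrade ``$X_q^\calF$ avoids $A$ modulo $\calI$'' to literal disjointness $X_q^\calF \cap A = \emptyset$, but your proposed mechanism---``avoiding that generator is itself a condition''---does not work. The set $J_\xi$ lies in $\calI$, so $[\kappa\setminus J_\xi]_\calI = 1$; there is no Cohen condition that corresponds to ``avoid $J_\xi$''. As you have defined them, the $Y_\alpha$ are chosen only up to $\calI$-equivalence, with no relationship to the $J_\xi$, so no Boolean combination $X_q^\calF$ has any reason to literally miss a given $J_\xi$. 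The same problem reappears in your second case, when you try to ``shrink to $r\le q$ with $X_r^\calF\subseteq A$'': you have $X_q^\calF\setminus A\subseteq J_\xi$ for some $\xi$, but no way to pass to an $r$ whose Boolean combination is actually disjoint from $J_\xi$.

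The paper fixes this by building the link into the \emph{choice} of representatives. Using $|\calJ|\le\theta$, one first fixes $f:\theta\to\calJ$ so that every $Y\in\calI$ satisfies $Y\subseteq f(\alpha)$ for $\theta$-many $\alpha$. Then one takes $A_\alpha$ to be a representative of $e(\{\langle\alpha,1\rangle\})$ that is moreover \emph{disjoint from $f(\alpha)$}, which is possible since $f(\alpha)\in\calI$. Now maximality is immediate: if $B\in\calI$ then $B\subseteq f(\alpha)$ for some $\alpha$, whence $A_\alpha\cap B=\emptyset$; if $B\notin\calI$ then some $p$ has $X_p\subseteq_\calI B$, and since $X_p\setminus B\in\calI$ there is $\alpha\notin\dom p$ with $X_p\setminus B\subseteq f(\alpha)$, so $X_{p^\frown\langle\alpha,1\rangle}=X_p\cap A_\alpha\subseteq B$. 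Contrary to your expectation, the step $[X_p^\calF]_\calI=e(p)$ is routine from $\mu$-completeness; the real work is this careful alignment of the $A_\alpha$ with a cofinal family in $\calI$.
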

\begin{proof}
Let $f : \theta \to \calJ$ be a function such that for all $Y \in \calI$, there are $\theta$-many $\alpha$ such that $Y \subseteq f(\alpha)$. For $\alpha<\theta$, let $A_\alpha$ be an equivalence-class representative of $e(\{ \langle \alpha,1 \ra \})$ that is disjoint from $f(\alpha)$.  For all $p \in \Add(\mu,\theta)$, if $Y$ is any equivalence-class representative of $e(p)$, then $Y \subseteq_{\calI} A_\alpha$ if and only if $\alpha \in \dom(p)$ and $p(\alpha) = 1$.  Since $\calI$ is $\mu$-complete, we can subtract away all the errors and get that $e(p) = \left[\bigcap_{p(\alpha) = 1} A_\alpha\setminus \bigcup_{p(\alpha) = 0} A_\alpha \right]_{\calI}$.  Thus $\{ A_\alpha : \alpha < \theta \}$ is $\mu$-free.

Let us show it is maximal.  
Let $B \subseteq \kappa$ be arbitrary.  If $B \in \calI$, then there is $\alpha$ such that $A_\alpha \cap B = \emptyset$.  Otherwise, there is $p \in \Add(\mu,\theta)$ such that $X_p \subseteq_{\calI} B$.  There is $\alpha \in \theta \setminus \dom(p)$ such that $f(\alpha) \supseteq X_p \setminus B$.  Then $X_p \cap A_\alpha \subseteq B$.

Finally we show that $\calI = \calI_\calF$, where $\calF = \{ A_\alpha : \alpha < \theta \}$.  Suppose $B \in \calI$.  Then for all $p \in \Add(\mu,\theta)$, there is some $\alpha \in \theta \setminus \dom(p)$ such that $A_\alpha \cap B = \emptyset$, and so $X_{p ^\frown \lan \alpha,1 \ra} \cap B = \emptyset$.  Thus $B \in \calI_\calF$.  Conversely, suppose $B \in \p(\kappa) \setminus \calI$.  Then there is $p \in \Add(\mu,\theta)$ such that $X_p \subseteq B$.  Thus no $q \leq p$ can satisfy $X_q \cap B = \emptyset$, so $B \notin \calI_\calF$.
\end{proof}

We would like to describe how large a cardinal of the form $\lambda_\mu$ must be.  The following is an elaboration on results of Kunen~\cite{Kunen}, using a different argument.

\begin{lem}
\label{lem:gch}
Suppose $\mu$ is regular and uncountable and $\calI$ is a $\mu$-complete ideal on $\kappa$ such that $\p(\kappa)/\calI$ is forcing-equivalent to $\Add(\mu,\theta)$, where $\theta\not=0$.  Then:
\begin{enumerate}
\item\label{gch} $2^{<\mu} = \mu$.
\item\label{sat} $\calI$ is $\mu^+$-saturated.
\item The completeness of $\calI$ is a cardinal $\lambda\geq\mu$ with the following properties:
\begin{enumerate}
\item\label{muinacc}For all $\alpha<\lambda$ and $\beta<\mu$, $\alpha^\beta<\lambda$.
\item\label{muacc} $2^\mu\geq\lambda$.
\item\label{mahlo} $\lambda$ is weakly $\lambda^+$-Mahlo.
\item\label{wc} $\lambda$ has the tree property, so if $\mu=\lambda$, then $\lambda$ is weakly compact.
\end{enumerate}
\end{enumerate}
\end{lem}

\begin{proof}
Let $\lambda \geq \mu$ be the completeness of $\calI$.  By passing to a positive set if necessary, we may assume that $\kappa$ is the union of $\lambda$-many sets from $\calI$.
Since $\p(\kappa)/\calI$ is equivalent to a countably closed forcing,
it follows that $\calI$ is precipitous (see~\cite{TJKP}).  Let $j : V \to M$ be the generic ultrapower embedding, where $M$ is a transitive subclass of $V[G]$.  The critical point of $j$ is $\lambda$.  Since $G$ introduces no new ${<}\mu$-sequences of ordinals, we must have that $\beta^\alpha < \crt(j)$ for each $\beta<\lambda$ and $\alpha<\mu$.  This shows (\ref{muinacc}), which implies (\ref{gch}) in the case $\mu=\lambda$.

Since $\Add(\mu,\theta)$ is $(2^{<\mu})^+$-c.c.\ and $\calI$ is $2^{<\mu}$-complete, $\calI$ is saturated and thus $M^{\lambda} \cap V[G] \subseteq M$ (see~\cite{Foreman}).  Since $G$ does introduce new subsets of $\mu$, and $\p(\mu)^M = \p(\mu)^{V[G]}$, we must have $2^\mu \geq \crt(j)$, establishing (\ref{sat}) and (\ref{muacc}).

Let us show (\ref{gch}) in the case $\mu<\lambda$.  Since $\Add(\mu,\theta)$ forces that $2^{<\mu} = \mu$, this also holds in $M$ by $\mu$-closure.  By elementarity and the fact that $\mu<\crt(j)$, $2^{<\mu} = \mu$ holds in $V$ as well.  Since $\calI$ is $\mu^+$-saturated, $\lambda$ cannot be a successor cardinal.

To see that $\lambda$ is weakly $\lambda^+$-Mahlo, it suffices to show that there is a normal filter $\calF$ over $\lambda$ that is closed under the Mahlo operation $\mathsf{m} : X \mapsto \{ \alpha \in X : X \cap \alpha$ is stationary$\}$.  Let $\calF = \{ X \subseteq \lambda : 1 \Vdash \lambda \in j(X) \}$.  Clearly $\calF$ is normal, so all $X \in \calF$ are stationary.  Since stationarity in $\lambda$ is preserved by both $\lambda$-c.c.\ and $\lambda$-closed forcing preserves stationarity, $X = j(X) \cap \lambda$ is stationary in $M$, so $\lambda \in j(\mathsf{m}(X))$.

To show the tree property, suppose $T$ is a $\lambda$-tree.   In $V[G]$, we find a cofinal branch of $T$ by looking below a node at level $\lambda$ of $j(T)$.  The forcing to produce the branch is either $\lambda$-closed or has chain condition below $\lambda$.  In neither case can the forcing add branches to $\lambda$-Aronszajn trees.
\end{proof}

\begin{cor}
\label{separation}
Suppose $\mu_0<\mu_1$ are uncountable cardinals and for $i=0,1$, $\calF_i$ is a $(\mu_i,\kappa_i)$-dmif. Then $\lambda_{\calF_0}\leq\mu_1$ and $\lambda_{\calF_0}<\lambda_{\calF_1}$.
Furthermore, if $|\calF_0|<\lambda_{\calF_1}$, then $\lambda_{\calF_0}\leq\kappa_0<\mu_1$.
\end{cor}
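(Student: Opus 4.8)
I would first rephrase everything in terms of the density ideals. By Lemma~\ref{lem:ideal}, for $i=0,1$ the ideal $\calI_{\calF_i}$ is $\mu_i$-complete, contains $[\kappa_i]^{<\kappa_i}$, and $\p(\kappa_i)/\calI_{\calF_i}$ is forcing-equivalent to $\Add(\mu_i,|\calF_i|)$, so Lemma~\ref{lem:gch} applies to each $\calF_i$ with $\mu=\mu_i$ and $\lambda=\lambda_{\calF_i}$. For $\lambda_{\calF_0}\le\mu_1$: Lemma~\ref{lem:gch}(\ref{muacc}) for $\calF_0$ gives $\lambda_{\calF_0}\le 2^{\mu_0}$, while Lemma~\ref{lem:gch}(\ref{gch}) for $\calF_1$ gives $2^{<\mu_1}=\mu_1$; since $\mu_0<\mu_1$ this yields $2^{\mu_0}\le\mu_1$, hence $\lambda_{\calF_0}\le\mu_1$. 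For $\lambda_{\calF_0}<\lambda_{\calF_1}$: we already have $\lambda_{\calF_0}\le\mu_1\le\lambda_{\calF_1}$, so it suffices to exclude equality; if $\lambda_{\calF_0}=\lambda_{\calF_1}$ then $\mu_1=\lambda_{\calF_1}$, so by Lemma~\ref{lem:gch}(\ref{wc}) for $\calF_1$ (the case $\mu=\lambda$) $\mu_1$ is weakly compact, in particular strongly inaccessible, giving $2^{\mu_0}<\mu_1=\lambda_{\calF_0}$ and contradicting Lemma~\ref{lem:gch}(\ref{muacc}) for $\calF_0$.

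For the ``Furthermore'' clause, the bound $\lambda_{\calF_0}\le\kappa_0$ is immediate: $[\kappa_0]^{<\kappa_0}\subseteq\calI_{\calF_0}$ while $\kappa_0\notin\calI_{\calF_0}$ (every $X_p$ is nonempty), so writing $\kappa_0$ as the union of its $\kappa_0$ singletons shows the completeness of $\calI_{\calF_0}$ is $\le\kappa_0$. For $\kappa_0<\mu_1$ I would start from the observation, extracted from the proof of Proposition~\ref{ineq2}, that a $(\mu_0,\kappa_0)$-mif $\calF_0$ satisfies $\kappa_0<|\calF_0|^{<\mu_0}$. Since by hypothesis $|\calF_0|<\lambda_{\calF_1}$ and $\mu_0<\mu_1$, Lemma~\ref{lem:gch}(\ref{muinacc}) for $\calF_1$ gives $|\calF_0|^{\beta}<\lambda_{\calF_1}$ for every $\beta<\mu_0$, and as $\lambda_{\calF_1}$ is regular (Lemma~\ref{lem:gch}(\ref{mahlo})) and $\mu_0<\lambda_{\calF_1}$, also $|\calF_0|^{<\mu_0}<\lambda_{\calF_1}$, so $\kappa_0<\lambda_{\calF_1}$.

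To sharpen $\kappa_0<\lambda_{\calF_1}$ to $\kappa_0<\mu_1$, suppose toward a contradiction that $\mu_1\le\kappa_0$ and force with $\bbQ:=\p(\kappa_1)/\calI_{\calF_1}\cong\Add(\mu_1,|\calF_1|)$; let $j:V\to M$ be the generic ultrapower, so $\crt(j)=\lambda_{\calF_1}$ and $M$ is closed under $\lambda_{\calF_1}$-sequences in $V[G]$ (Lemma~\ref{lem:gch}). Since $\kappa_0,|\calF_0|,\mu_0<\lambda_{\calF_1}=\crt(j)$, the map $j$ fixes $\calF_0,\kappa_0,\mu_0,|\calF_0|$, so $M$ believes $\calF_0$ is a $(\mu_0,\kappa_0)$-dmif; and since $\kappa_0<\lambda_{\calF_1}$ and $\bbQ$ is $<\mu_1$-closed, closure of $M$ gives $\p(\kappa_0)^M=\p(\kappa_0)^{V[G]}$ and $\Add(\mu_0,|\calF_0|)^M=\Add(\mu_0,|\calF_0|)^{V[G]}=\Add(\mu_0,|\calF_0|)^V$. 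It follows that $\calF_0$ is still a $(\mu_0,\kappa_0)$-mif in $V[G]$. But $\bbQ$ is, up to isomorphism, $\Add(\mu_1,|\calF_1|)$ with $\mu_1\le\kappa_0$, and the point I would prove is that such a Cohen-type forcing always adds a set $Y\subseteq\kappa_0$ with $\calF_0\cup\{Y\}$ still $\mu_0$-free, contradicting the maximality of $\calF_0$ in $V[G]$: when $\mu_1=\kappa_0$ a single generic subset of $\kappa_0$ meets and misses every ground-model subset of $\kappa_0$ of size $\kappa_0$, hence splits every Boolean combination $X_p$; when $\mu_1<\kappa_0$ one splices the mutually generic Cohen subsets of $\mu_1$ added by $\bbQ$ along a partition of $\kappa_0$ and runs the analogous density argument.

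The main obstacle is exactly this last step: showing that $\Add(\mu_1,|\calF_1|)$ with $\mu_1\le\kappa_0$ cannot preserve a $(\mu_0,\kappa_0)$-mif. The delicate case is $\mu_1<\kappa_0$ with $|\calF_1|<\kappa_0$, where the generic object is supported on a set of size $<\kappa_0$; there one must choose the amplifying partition of $\kappa_0$ with care and then verify, by a density argument over $\bbQ$, that the resulting $Y$ simultaneously splits all $|\calF_0|^{<\mu_0}$-many Boolean combinations of $\calF_0$.
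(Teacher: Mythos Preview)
Your argument for the first two inequalities is correct and essentially matches the paper's, though the paper handles $\lambda_{\calF_0}<\lambda_{\calF_1}$ more directly: Lemma~\ref{lem:gch}(\ref{muinacc}) applied to $\calF_1$ (with $\alpha=2$, $\beta=\mu_0<\mu_1$) already gives $2^{\mu_0}<\lambda_{\calF_1}$, and combined with $\lambda_{\calF_0}\le 2^{\mu_0}$ from (\ref{muacc}) this yields the strict inequality without any appeal to weak compactness.

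For the ``Furthermore'' clause your overall strategy is exactly the paper's, but the ``main obstacle'' you identify is a phantom: the case $|\calF_1|<\kappa_0$ never occurs. The paper inserts the short observation that $\lambda_{\calF_1}^{<\mu_1}=\lambda_{\calF_1}$ (from Lemma~\ref{lem:gch}(\ref{muinacc}) and the regularity of $\lambda_{\calF_1}$), while $|\calF_1|^{<\mu_1}>\kappa_1\ge\lambda_{\calF_1}$ (from Proposition~\ref{ineq2}); together these force $|\calF_1|>\lambda_{\calF_1}$, and you have already shown $\lambda_{\calF_1}>\kappa_0$. With $|\calF_1|>\kappa_0$ in hand, one simply selects $\kappa_0$ distinct coordinates $\langle\xi_\alpha:\alpha<\kappa_0\rangle$ of $\Add(\mu_1,|\calF_1|)$ and sets $Y=\{\alpha<\kappa_0:G(\xi_\alpha)=1\}$. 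Since conditions have domain of size $<\mu_1$ and every ground-model $X\in[\kappa_0]^{\ge\mu_1}$ (in particular every $X_p$, as $|X_p|=\kappa_0\ge\mu_1$) has at least $\mu_1$ points, an easy density argument shows $Y$ splits each such $X$. No splicing along a partition is needed, and the case distinction on $\mu_1=\kappa_0$ versus $\mu_1<\kappa_0$ disappears.
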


\begin{proof}
Let $\lambda_i$ denote $\lambda_{\calF_i}$.  Let us first show that $\lambda_0\leq\mu_1$.  Applying part (\ref{muacc}) of Lemma \ref{lem:gch} to the case $\mu = \mu_0$, we have $2^{\mu_0} \geq \lambda_0$.  If $\mu_1 < \lambda_0$, then by part (\ref{gch}) applied to the case $\mu= \mu_1$, $2^{\mu_0} \leq \mu_1<\lambda_0$, a contradiction.
To show that $\lambda_0<\lambda_1$, let us assume that $\lambda_0 \geq \lambda_1$.  By part (\ref{muinacc}) applied to the case $\mu= \mu_1$, $2^{\mu_0} < \lambda_1\leq\lambda_0$, again a contradiction.

To show the final claim, first note that $\lambda_i\leq\kappa_i$, since $\lambda_i$ is the completeness of an ideal on $\kappa_i$ that extends the ideal of bounded sets.  Suppose towards a contradiction that $|\calF_0|<\lambda_1$ and $\mu_1\leq\kappa_0$.  By Proposition \ref{ineq2}, $\kappa_0<|\calF_0|^{<\mu_0} <\lambda_1$.   Since $\lambda_1^{<\mu_1} = \lambda_1$ by Lemma \ref{lem:gch} and $|\calF_1|^{<\mu_1} > \kappa_1 \geq \lambda_1$ by Proposition \ref{ineq2}, we must have $|\calF_1| > \lambda_1>\kappa_0$.  Let 
$j : V \to M \subseteq V[G]$ be a generic embedding arising from $\calI_{\calF_1}$, where $G$ is $\Add(\mu_1,|\calF_1|)$-generic over $V$.  By elementarity and the fact that $\crt(j) = \lambda_1$, $M$ satisfies that $\calF_0$ is a $(\mu_0,\kappa_0$)-mif.  However, by a basic property of Cohen forcing, $G$ adds a set $Y\subseteq\kappa_0$ of size $\kappa_0$ that splits every $X \in \p(\kappa_0)^V$ of size $\geq\mu_1$.  Thus $\calF_0 \cup \{ Y \}$ is $\mu_0$-independent in $V[G]$.  By the closure of $M$, $Y \in M$, and so $\calF_0$ is not maximal in $M$, a contradiction.
\end{proof}

%

Contrasting somewhat with the above result, we will show later using supercompacts that consistently there is a single $\kappa$ with more than one uncountable $\mu<\kappa$, such that $\mathfrak{sp}_\fri(\mu,\kappa) \not=\emptyset$.  It follows from the above result that the associated density completeness numbers must be different.

We obtain the following property of the density ideals:

\begin{prop}
Suppose $\calF$ is a $(\mu,\kappa)$-mif of size $\theta$.  Then $\calI_{\calF}$ has a $\subseteq$-cofinal subset of size $\leq \theta^\mu$.
\end{prop}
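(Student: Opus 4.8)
The plan is to produce an explicit $\subseteq$-cofinal subfamily of $\calI_\calF$. Write $\theta=|\calF|$, fix a bijection of $\calF$ with $\theta$, and code the Boolean combinations of $\calF$ by conditions in $\bbP:=\Add(\mu,\theta)$, so that $|\bbP|=\theta^{<\mu}$, $X_p\subseteq X_q$ whenever $p\leq q$, and $X_p\cap X_q=\emptyset$ whenever $p\perp q$. The candidate cofinal family is
$$\calJ:=\Big\{\,\kappa\setminus\bigcup\nolimits_{p\in A}X_p \;:\; A\ \text{a maximal antichain of}\ \bbP\,\Big\}.$$

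First I would check the two containments that say $\calJ$ works. For $\calJ\subseteq\calI_\calF$: given a maximal antichain $A$ and any $q\in\bbP$, pick $p\in A$ compatible with $q$ and a common extension $r\leq p,q$; then $X_r\subseteq X_p$, so $X_r$ is disjoint from $\kappa\setminus\bigcup_{p'\in A}X_{p'}$. Hence the conditions witnessing disjointness from $\kappa\setminus\bigcup_{p\in A}X_p$ are dense, i.e.\ $\kappa\setminus\bigcup_{p\in A}X_p\in\calI_\calF$. For cofinality: given $Y\in\calI_\calF$, the set $D_Y=\{p\in\bbP:X_p\cap Y=\emptyset\}$ is dense (this is precisely what $Y\in\calI_\calF$ asserts) and downward closed, hence it contains a maximal antichain $A$ of $\bbP$ (a standard point: a maximal antichain inside a dense subset is a maximal antichain of the whole poset). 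Since every $p\in A$ satisfies $X_p\cap Y=\emptyset$, we obtain $Y\subseteq\kappa\setminus\bigcup_{p\in A}X_p\in\calJ$.

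The crux, and the only nontrivial ingredient, is the cardinality estimate, which reduces to showing every antichain of $\bbP$ has size $\leq\mu$ — equivalently, that $2^{<\mu}=\mu$. For $\mu=\omega$ this is immediate. For $\mu$ uncountable I would invoke the earlier machinery: since $\theta=|\calF|\in\spec(\mu,\kappa)$, Lemma~\ref{lem:ideal} furnishes a $\mu$-complete ideal $\calI$ on $\kappa$ with $\p(\kappa)/\calI$ forcing-equivalent to $\Add(\mu,\theta)$, and then part~(\ref{gch}) of Lemma~\ref{lem:gch} gives $2^{<\mu}=\mu$. Consequently $\bbP=\Add(\mu,\theta)$ is $(2^{<\mu})^+$-c.c., i.e.\ $\mu^+$-c.c., so all its antichains have size $\leq\mu$. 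Therefore $\calJ$ is indexed by a subfamily of $[\bbP]^{\leq\mu}$, so $|\calJ|\leq|\bbP|^\mu=(\theta^{<\mu})^\mu=\theta^\mu$, the last equality because $\theta^{<\mu}\leq\theta^\mu$ and hence $(\theta^{<\mu})^\mu\leq(\theta^\mu)^\mu=\theta^\mu$. Granting $2^{<\mu}=\mu$, everything else is bookkeeping.
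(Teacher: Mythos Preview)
Your proof is correct and follows essentially the same approach as the paper: both take the cofinal family to consist of complements $\kappa\setminus\bigcup_{p\in A}X_p$ over maximal antichains $A\subseteq\Add(\mu,\theta)$, and both invoke Lemma~\ref{lem:gch} to obtain the $\mu^+$-c.c.\ needed for the cardinality bound. You are slightly more careful than the paper in explicitly verifying $\calJ\subseteq\calI_\calF$ and in handling the case $\mu=\omega$ separately (Lemma~\ref{lem:gch} assumes $\mu$ uncountable), but the core argument is identical.
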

\begin{proof}
If $Y \in \calI_{\calF}$, then there is a maximal antichain $\calA \subseteq \Add(\mu,\theta)$ such that $Y \cap X_p = \emptyset$ for all $p \in \calA$.  Then $X_{\calA} = \bigcup_{p \in A} X_p$ is in the dual filter to $\calI_{\calF}$, and $\kappa \setminus X_{\calA} \supseteq Y$.  By Lemma \ref{lem:gch}, $\Add(\mu,\theta)$ has the $\mu^+$-c.c., so the number of such $X_{\calA}$ is at most $\theta^\mu$.
\end{proof}

\section{Fragility and indestructibility}
\label{section.fragile}

On one side, $2^\kappa\in\mathfrak{sp}_\mathfrak{i}(\omega,\kappa)$ for every regular uncountable $\kappa$, and on the other, by the results in the previous sections, for many natural choices of $\omega<\mu,\kappa$, the spectrum $\mathfrak{sp}_\mathfrak{i}(\omega,\kappa)=\emptyset$. Note that, if $\spec(\mu,\kappa) \not=\emptyset$, then $2^\mu$ and $\kappa$ are at least as large as a weakly Mahlo cardinal.  Furthermore, Corollary \ref{separation} implies that if $\{ (\mu_i,\kappa_i) : i < \theta \}$ is a collection of pairs of uncountable cardinals such that the $\mu_i$ are distinct and $\spec(\mu_i,\kappa_i) \not=\emptyset$ for all $i$, then the $\mu_i$ are separated by a sequence of distinct weakly Mahlo cardinals.
In particular, $\spec(\mu,\kappa)$ is nonempty for at most one uncountable $\mu$ below the first weakly Mahlo and consistently $\spec(\mu,\kappa) = \emptyset$ for all uncountable $\mu,\kappa$. 

In this section we will show that the property $\spec(\mu,\kappa) \neq \emptyset$ for uncountable $\mu$ is rather fragile, as it is destroyed by relatively small forcings (see Theorem~\ref{fragile}). Nevertheless, the property $\spec(\kappa,\kappa) \neq \emptyset$ can be indestructible by a wide class of $\kappa$-directed-closed forcings (see Theorem~\ref{indest}).

 We will employ the notion $\kappa$-approximations due to Hamkins~\cite{Hamkins}. Recall that if $V \subseteq W$ are models of set theory, the pair $(V,W)$ satisfies the $\kappa$-approximation property when for all $X \in V$ and all $Y \subseteq X$ in $W$, if $Y \cap a \in V$ for all $a \in \p_\kappa(X)^V$, then $Y \in V$.  We say that a partial order $\bbP$ has the $\kappa$-approximation property if it forces that the pair $(V,V[G])$ has this property.  

Usuba~\cite{Usuba} defined a forcing to have the \emph{strong $\kappa$-chain condition} when it has the $\kappa$-c.c.\ and does not add branches to $\kappa$-Suslin trees.  He noted that $\bbP$ is strongly $\kappa$-c.c.\ if either $\bbP^2$ is $\kappa$-c.c.\ or $\bbP$ is $\mu$-c.c.\ for some $\mu<\kappa$.  
Improving on results of Hamkins~\cite{Hamkins} and Unger~\cite{Unger}, Usuba showed the following:

\begin{thm}[Usuba]
Suppose $\kappa$ is a regular cardinal, and $\bbP$ is a nontrivial $\kappa$-c.c.\ partial order, and $\dot{\bbQ}$ is a $\bbP$-name for a $\kappa$-strategically-closed forcing.  Then $\bbP * \dot{\bbQ}$ has the $\kappa$-approximation property if and only if $\bbP$ has the strong $\kappa$-c.c.
\end{thm}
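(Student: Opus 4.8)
The plan is to prove the two implications separately, using throughout that $\bbP$ is $\kappa$-c.c.\ (as hypothesised), so that ``$\bbP$ has the strong $\kappa$-c.c.''\ just means ``$\bbP$ adds no cofinal branch to any $\kappa$-Suslin tree of $V$''. For the direction ``$\kappa$-approximation $\Rightarrow$ strong $\kappa$-c.c.'' I would argue the contrapositive. If $\bbP$ is $\kappa$-c.c.\ but not strongly $\kappa$-c.c., fix a $\kappa$-Suslin tree $T\in V$ and a $\bbP$-name $\dot b$ for a cofinal branch of $T$ together with a condition forcing $\dot b\notin\check V$. After the harmless normalisation that the underlying set of $T$ is $\kappa$ and the $\alpha$-th level $T_\alpha$ is a subset of $\kappa$ of size $<\kappa$, the branch $b$ (viewed as a set of nodes) already witnesses failure of the $\kappa$-approximation property for $\bbP$: given $a\in[\kappa]^{<\kappa}\cap V$, regularity of $\kappa$ and smallness of the levels give an $\alpha<\kappa$ with $a\subseteq T\restr(\alpha{+}1)$, and then $b\cap a=\{y\in T:y\le_T b(\alpha)\}\cap a\in V$ since $T,b(\alpha)\in V$, while $b\notin V$. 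As $b$ remains a cofinal branch of $T$ lying outside $V$ after the further $\dot\bbQ$-forcing, $\bbP*\dot\bbQ$ fails the $\kappa$-approximation property.

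For the converse, assume $\bbP$ is strongly $\kappa$-c.c.\ and suppose toward a contradiction that some condition of $\bbP*\dot\bbQ$ forces that $\dot Y$ is a subset of a cardinal, all of whose $<\kappa$-sized $V$-approximations lie in $V$, but $\dot Y\notin\check V$. \textbf{Step 1 (reflection to $\bbP$).} The goal is to see that $\bbP$ alone already adds a \emph{fresh subset of $\kappa$}, i.e.\ a $Y\subseteq\kappa$ in $V[G]$ with $Y\cap\alpha\in V$ for every $\alpha<\kappa$ and $Y\notin V$. If the witness happens to lie in $V[G]$, this is a chain-condition exercise: minimising the length of the witness and using that $\bbP$ is $\kappa$-c.c.\ (so each relevant restriction of $\dot Y$ has $<\kappa$ possible values, which can be separated by a $<\kappa$-sized ground-model set) one reduces to length $\kappa$. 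If the witness is not in $V[G]$, then it is added by the $\kappa$-strategically-closed $\dot\bbQ$ over $V[G]$, and here one runs the Hamkins--Unger argument \cite{Hamkins,Unger}: using a (name for a) winning strategy $\sigma$ for the closure game together with $\dot Y$, build \emph{in $V$} a tree whose nodes are partial $\sigma$-plays (of lengths $<\kappa$) deciding longer and longer initial segments of $\dot Y$, strategic closure supplying lower bounds at limit stages below $\kappa$; a sufficiently generic branch of this tree, read off from the $\bbP$-generic --- and this is where nontriviality of $\bbP$ is used --- yields a fresh subset of $\kappa$ lying already in $V[G]$.

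\textbf{Step 2 (extracting a $\kappa$-Suslin tree).} Let $\dot Z$ be a $\bbP$-name for the fresh subset and fix $b_0\in\bbP$ forcing ``$\dot Z$ is a fresh subset of $\kappa$ and $\dot Z\notin\check V$''. Put $T_\alpha=\{t\in\p(\alpha)^V:(\exists p\le b_0)\ p\Vdash_\bbP\dot Z\cap\check\alpha=\check t\}$, ordered by end-extension. Each $T_\alpha$ is nonempty (as $\dot Z\cap\check\alpha$ is forced into $V$), and $T$ has no antichain of size $\kappa$: nodes that are decided by incompatible partial functions are decided by incompatible conditions below $b_0$, so a $\kappa$-sized antichain of $T$ would produce one in $\bbP$; in particular each level has size $<\kappa$, so $T$ is a tree of $V$ of height $\kappa$ and size $\kappa$. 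Moreover $\bbP$ adds to $T$ the cofinal branch $\langle Z\cap\alpha:\alpha<\kappa\rangle$, which is not in $V$ (otherwise $Z\in V$). Finally $T$ has no cofinal branch in $V$: if $Z^*\in V$ were one, then for every $\alpha<\kappa$ some $p_\alpha\le b_0$ would force $\dot Z$ to agree with $Z^*$ below $\alpha$; but $b_0$ forces $\dot Z\neq\check Z^*$, so a maximal antichain $A\le b_0$ deciding the least place at which $\dot Z$ and $Z^*$ differ has size $<\kappa$, hence the decided values are bounded by some $\beta^*<\kappa$, whence $p_{\beta^*+1}$ is incompatible with every member of $A$ --- contradicting maximality of $A$. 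Thus $T$ is a $\kappa$-Suslin tree of $V$ to which $\bbP$ adds a branch, contradicting the strong $\kappa$-c.c.

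I expect Step 1 --- the reflection of the new subset from $\bbP*\dot\bbQ$ down to $\bbP$, absorbing the $\kappa$-strategically-closed factor into a tree-of-plays construction carried out over $V$ --- to be the main obstacle, the chain-condition/minimisation bookkeeping inside it being standard but fiddly; the sharp form of this reflection is exactly the content of Usuba's theorem \cite{Usuba}. Step 2 as sketched is comparatively clean, and the forward direction is essentially immediate.
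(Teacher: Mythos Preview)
The paper does not prove this theorem: it is stated with attribution to Usuba and a citation to \cite{Usuba}, and is then used as a black box in the proofs of Lemma~\ref{strongmucc} and Lemma~\ref{fragilemm}. There is therefore no proof in the paper against which to compare your attempt.

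For what it is worth, your outline tracks the standard argument. The forward direction is correct as written (a branch through a $\kappa$-Suslin tree is automatically a fresh subset of $\kappa$, and it survives the $\kappa$-strategically-closed tail). Your Step~2 is the right construction: incomparable nodes of $T$ are forced by incompatible conditions, so the $\kappa$-c.c.\ of $\bbP$ bounds antichains in $T$, and your argument ruling out a ground-model branch via a maximal antichain deciding the first point of disagreement is clean. You are also right that Step~1 is where the content lies; the tree-of-$\sigma$-plays idea you describe is precisely the Hamkins--Unger mechanism that Usuba refines, and the nontriviality of $\bbP$ is what lets the generic select a splitting path through that tree. The only place your sketch is a bit thin is the first sub-case of Step~1 (reducing a fresh subset of an arbitrary ordinal in $V[G]$ to a fresh subset of $\kappa$); this is routine but does need a line or two, e.g.\ iteratively finding levels at which the possible values of $\dot Y$ split below a given condition.
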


Another important tool in our analysis is the Duality Theorem of Foreman (see~\cite{Foreman}), which gives an exact characterization of how the structure of quotient algebras of precipitous ideals is affected by forcing.  If $V \subseteq W$ are models of set theory and $\calI$ is an ideal in $V$, then in $W$, let $\bar\calI$ denote the ideal generated by $\calI$, i.e.\ the set of all $X \subseteq \bigcup \calI$ such that $X \subseteq Y$ for some $Y \in \calI$.  It is easy to see that if $\calI$ is $\kappa$-complete in $V$ and $W$ is a $\kappa$-c.c.\ forcing extension, then $\bar\calI$ is $\kappa$-complete in $W$.

\begin{thm}[Foreman]
\label{foreman}
Suppose $\calI$ is a $\kappa$-complete precipitous ideal on $Z$ and $\bbQ$ is a $\kappa$-c.c.\ forcing.  Then there is a regular embedding $e : \bbQ \to \p(Z)/\calI * j(\bbQ)$ such that whenever $G * \bar H$ is generic for 
$\p(Z)/\calI * j(\bbQ)$ and $H = e^{-1}[\bar H]$, the generic embedding $j_{\dot G}$ lifts to $V[H]$.  Moreover, $\bar\calI$ is precipitous in $V[H]$, and there is a dense embedding
$$d : \bbQ * \p(Z)/\bar\calI \to \calB\left(\p(Z)/\calI * j_{\dot G}(\bbQ)\right).$$
The map $d$ extends $e$ and is given by $d(q,\dot A) =  j_{\dot G}(q) \wedge ||[\id]_{\dot G} \in j_{\dot G}(\dot A)||$.

Furthermore, suppose $H * \bar G \subseteq  \bbQ * \p(Z)/\bar\calI$ and $G * H' \subseteq \p(Z)/\calI * j_{\dot G}(\bbQ)$ are two generics generated from each other by the map $d$.  Let $j_0 : V \to M_0 \cong V^Z/G$ and $j_1 : V[H] \to M_1 \cong V[H]^Z/\bar G$ be the two ultrapower maps.  Then $j_1 \restriction V = j_0$, $j_1(H)=H'$, and $M_1 = M_0[H']$.
\end{thm}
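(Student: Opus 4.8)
The plan is to produce the two embeddings $e$ and $d$ explicitly and then read every remaining assertion off those descriptions, so the work is mostly bookkeeping with generic ultrapowers.

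First I would define $e:\bbQ\to\p(Z)/\calI * j(\bbQ)$ by $e(q)=(\mathbf 1,\,j_{\dot G}(\check q))$, where $j_{\dot G}(\check q)$ is the canonical $\p(Z)/\calI$--name for the image of $q$ under the generic ultrapower (equivalently, the value of the constant function $a\mapsto q$). Order and incompatibility preservation are immediate from elementarity; the only real point is that $e$ carries maximal antichains to predense sets, and this is exactly where the hypothesis on $\bbQ$ enters. Since $\calI$ is $\kappa$--complete, $\kappa\le\crt(j_{\dot G})$, so any maximal antichain $A\subseteq\bbQ$ has $|A|<\kappa\le\crt(j_{\dot G})$ and hence $j_{\dot G}(A)=j_{\dot G}''A$, which $M$ — and therefore the second-step forcing — sees as a maximal antichain of $j_{\dot G}(\bbQ)$; thus $e''A$ is predense below $\mathbf 1$ and $e$ is regular. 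For generic $G*\bar H$ this makes $H:=\{q\in\bbQ:j_{\dot G}(q)\in\bar H\}$ $\bbQ$--generic over $V$, and since $j_{\dot G}''H\subseteq\bar H$ and $\bar H$ is $j_{\dot G}(\bbQ)$--generic over $M$, the standard lifting lemma gives $\hat j:V[H]\to M[\bar H]$, $\hat j(\tau^H)=j_{\dot G}(\tau)^{\bar H}$, extending $j_{\dot G}$.

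Next I would put $d(q,\dot A)=j_{\dot G}(q)\wedge\big\|\,[\id]_{\dot G}\in j_{\dot G}(\dot A)\,\big\|$ in $\calB(\p(Z)/\calI*j(\bbQ))$ for pairs with $q\forces_\bbQ\dot A\in\p(Z)/\bar\calI$. Order and incompatibility preservation reduce to one observation: $[\id]_{\dot G}\notin j_{\dot G}(B)$ for every $B$ in the ideal generated by $\calI$ (such a $B$ sits inside some $C\in\calI$, and $[\id]_{\dot G}\notin j_{\dot G}(C)$ because $[C]_{\dot G}=0$), so mod--$\bar\calI$ differences are invisible to the seed. For density of the range I would, given $([C]_\calI,\dot r)$, first shrink $C$ so that $\dot r$ is forced to equal $j_{\dot G}(g)([\id]_{\dot G})$ for a fixed $g:Z\to\bbQ$ in $V$, then let $\dot A$ name $\{a\in C:g(a)\in\dot H_\bbQ\}$; a $\kappa$--c.c.\ plus $\kappa$--completeness argument shows $\dot A$ is forced $\bar\calI$--positive below some $q$ (otherwise $g$ would be trivial on a $G$--large part of $C$, forcing $\dot r=\mathbf 0$), and computing $j_{\dot G}(\dot A)$ gives $\|\,[\id]_{\dot G}\in j_{\dot G}(\dot A)\,\|=([C]_\calI,\dot r)$, so $d(q,\dot A)\le([C]_\calI,\dot r)$.

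With $d$ in hand, forcing with $\bbQ*\p(Z)/\bar\calI$ coincides with forcing with $\p(Z)/\calI*j(\bbQ)$; in the latter the first factor makes $\mathrm{Ult}(V,G)=M_0$ well founded by precipitousness of $\calI$, and the second produces the transitive $M_0[\bar H]$. Writing a $j_{\dot G}(\bbQ)$--name in $M_0$ as $[a\mapsto\sigma_a]_{\dot G}$ with each $\sigma_a$ a $\bbQ$--name, one checks $\hat j(a\mapsto\sigma_a^H)([\id]_{\dot G})=\big([a\mapsto\sigma_a]_{\dot G}\big)^{\bar H}$, so $M_0[\bar H]$ is exactly the transitive collapse of $\mathrm{Ult}(V[H],\bar G)$ for $\bar G=\{[A]_{\bar\calI}:[\id]_{\dot G}\in\hat j(A)\}$, with collapse map $\hat j$; this is precipitousness of $\bar\calI$ in $V[H]$, and tracing the correspondence through $d$ gives $j_1\restriction V=j_0$, $j_1(H)=H'$, $M_1=M_0[H']$ and the last displayed dense embedding. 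I expect the main obstacle to be the density of $\ran(d)$: there one must simultaneously check that the witness $\dot A$ is forced $\bar\calI$--positive and that its generic-ultrapower image has Boolean value precisely $([C]_\calI,\dot r)$. By contrast the chain-condition step — moving maximal antichains with $j_{\dot G}$ — is conceptually central but short, and identifying $M_0[\bar H]$ with the $V[H]$--ultrapower is routine once one keeps straight which generic feeds which forcing.
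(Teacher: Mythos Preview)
The paper does not supply its own proof of this statement; it is quoted as Foreman's Duality Theorem with a citation to \cite{Foreman} and used as a black box throughout. So there is no in-paper argument to compare your sketch against.

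On its own merits, your outline is the standard proof and is essentially correct. The regularity of $e$ via $j_{\dot G}(A)=j_{\dot G}{}''A$ for maximal antichains $A\subseteq\bbQ$ of size ${<}\kappa$ is exactly the right use of the $\kappa$-c.c.\ together with $\kappa$-completeness. Your formula for $d$ matches the statement, and you correctly identify the density of $\ran(d)$ as the substantive step; the construction you indicate---represent $\dot r$ below a refinement of $[C]_\calI$ by a fixed $g:Z\to\bbQ$, set $\dot A$ to name $\{a\in C:g(a)\in\dot H\}$, and combine the chain condition with $\kappa$-completeness to find $q$ forcing $\dot A\notin\bar\calI$---is the right one and goes through once one first shrinks $C$ to $\{a:g(a)\not=0\}$. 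The one place a reader might want another sentence is incompatibility preservation for $d$: your observation that $[\id]_{\dot G}\notin j_{\dot G}(B)$ for $B\in\calI$ does suffice, but one should say explicitly that if $d(q_0,\dot A_0)$ and $d(q_1,\dot A_1)$ were compatible, a generic through both would yield $q_0,q_1\in H$ and $[\id]_{\dot G}\in\hat j(A_0\cap A_1)\subseteq j_{\dot G}(B)$ for some $B\in\calI$, a contradiction. The identification $M_1=M_0[H']$ and the equalities $j_1\restriction V=j_0$, $j_1(H)=H'$ follow from the \L o\'s computation you describe and are routine.
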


\subsection{Fragility} We will start with establishing our non-existence result:

\begin{thm}\label{fragile}
If $\mu$ is a regular cardinal and $\bbP$ is a nontrivial forcing either of size $<\mu$ or satisfying the $\nu$-c.c.\ for some $\nu<\mu$, then $\bbP$ forces $\spec(\mu,\kappa) = \emptyset$ for all $\kappa$.  
\end{thm}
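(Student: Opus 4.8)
The plan is to argue by contradiction: suppose in $V$ we have such a forcing $\bbP$ and a $\bbP$-name $\dot\calF$ for a $(\mu,\kappa)$-mif of size $\theta$ for some $\kappa$. By Lemma~\ref{lem:ideal} applied in $V^\bbP$, there is (in $V^\bbP$) a uniform $\mu$-complete ideal $\calI$ on $\kappa$ and a dense embedding $e : \Add(\mu,\theta) \to \p(\kappa)/\calI$. The strategy is to exploit the fact that $\bbP$ is ``small'' relative to $\mu$ — either of size $<\mu$ or $\nu$-c.c.\ with $\nu<\mu$ — to pull the ideal $\calI$, or at least enough of its structure, back into $V$, and then derive a contradiction from the fact that $\mu$ is regular: a small nontrivial forcing cannot ``create'' a $\mu$-complete uniform ideal where there was none, because any element of the quotient realized generically is already essentially coded in $V$. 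Concretely, I would isolate a single condition $p \in \bbP$ deciding the relevant data and consider the factor forcing below $p$.

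The key technical point is the following. Work below a condition forcing the existence of $\calI$ and $e$ as above. Since $\bbP$ has the $\nu$-c.c.\ (or size $<\mu$, which gives $\mu$-c.c.) and $\mu$ is regular with $\nu < \mu$, the forcing $\bbP$ is $\mu$-c.c.; hence every subset of $\kappa$ in $V^\bbP$ is covered by a set in $V$ of size $<\mu\cdot|\kappa|$, and more importantly, for a name $\dot Y$ for a subset of $\kappa$, the Boolean values $\llbracket \alpha \in \dot Y\rrbracket$ form an antichain-indexed family of size $<\mu$ for each $\alpha$. The heart of the matter: I claim that the density ideal structure forces a maximal $\mu$-free family to ``survive'' into a model where it is no longer $\mu$-free. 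More precisely, take the generic Cohen set added by one coordinate of $\Add(\mu,\theta)$ — no, better: I would instead directly contradict Proposition~\ref{ineq2} or Corollary~\ref{separation}. The cleanest route is: in $V^\bbP$ we have $2^{<\mu} \le \kappa$ (Proposition~\ref{ineq2}), and since $\bbP$ is $\mu$-c.c.\ of size $<\mu$ (or more generally adds no new subsets of ordinals $<\mu$ beyond what $\mu$-c.c.\ allows), we would need to track how $2^{<\mu}$ behaves. That is not quite the contradiction either.

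The right contradiction, I believe, is obtained by lifting an embedding. Since $\bbP$ is $\mu$-c.c.\ and $\calI$ (in $V^\bbP$) is $\mu$-complete with quotient $\Add(\mu,\theta)$, Lemma~\ref{lem:gch} gives that the completeness $\lambda$ of $\calI$ is a large cardinal (weakly Mahlo, tree property), in particular $\lambda > \mu > \nu \ge |\bbP|$ or $> \nu$. Now I would argue that $\calI \cap V$ — the trace of the $V^\bbP$-ideal on $V$ — is itself a $\mu$-complete ideal in $V$: this uses that $\bbP$, being of size $<\mu$ or $\nu$-c.c.\ with $\nu<\mu$, is ``$\mu$-approximated'' in the relevant sense, so that $V$-subsets of $\kappa$ are decided by small pieces and $\mu$-completeness reflects down. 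But then $\p(\kappa)^V / (\calI\cap V)$ densely embeds into a $\mu$-closed, $\mu$-distributive forcing (a regular suborder of $\Add(\mu,\theta)$-generic structure), forcing $2^{<\mu}$ up while $\bbP$ already fixed the continuum function below $\mu$ — and the point is that $\bbP$ being nontrivial and small means it added a \emph{new} subset of some $\rho < \mu$, or collapsed nothing, so one of the equalities $2^{<\mu}=\mu$ from Lemma~\ref{lem:gch}(\ref{gch}), which must hold in $V^\bbP$, fails to be explicable: a nontrivial $\nu$-c.c.\ forcing with $\nu<\mu$ that forces $2^{<\mu}=\mu$ together with the existence of such a precipitous ideal forces the \emph{same} configuration to have essentially held in $V$ up to a small perturbation, and then $\bbP$ itself would have had to add a branch or a new subset incompatible with the $\mu$-approximation property guaranteed by Usuba's theorem for strongly-$\kappa$-c.c.\ forcings followed by strategically closed forcing.

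\textbf{Main obstacle.} The crux — and what I expect to be the genuinely delicate step — is showing that a nontrivial forcing of size $<\mu$ (or $\nu$-c.c., $\nu<\mu$) \emph{cannot} force the existence of a $\mu$-complete uniform ideal $\calI$ on any $\kappa$ with $\p(\kappa)/\calI \cong \Add(\mu,\theta)$ when none existed in $V$ with comparable completeness. One wants to say: such $\bbP$ has the $\mu$-approximation property (it does, by Usuba's theorem, since $\bbP * \{\mathbbm 1\}$ or any small $\bbP$ is strongly $\mu$-c.c.), and the $\mu$-approximation property means $\p(\kappa)^{V^\bbP}$ is ``generated'' over $\p(\kappa)^V$ by $<\mu$-approximations, so the density ideal $\calI_{\dot\calF}$ reflects to a $\mu$-complete ideal $\calI \cap V$ in $V$ whose quotient still densely embeds a (possibly smaller) Cohen algebra $\Add(\mu,\theta')$ with $\theta' \ge 1$ — but then $\mathfrak{sp}_\fri(\mu,\kappa)\ne\emptyset$ already in $V$, and moreover a nontrivial small $\bbP$ would add a set witnessing non-maximality of the corresponding $V$-mif, exactly as in the proof of the final claim of Corollary~\ref{separation} (where the Cohen generic splits all large ground-model sets). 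I would run precisely that splitting argument: $\bbP$ nontrivial adds some new real-like object, which one amplifies (via the $\mu$-freeness and the disjoint refinement coding Boolean combinations by $\Add(\mu,|\calF|)$-conditions) into a set $Y \subseteq \kappa$ splitting every Boolean combination $X_p$, contradicting maximality of the reflected family — and if the family does not reflect, then the $\mu$-approximation property is violated, contradicting Usuba. Packaging these two horns cleanly, and handling the bookkeeping of which $\kappa$ and $\theta$ one gets in $V$ versus $V^\bbP$, is where the real work lies.
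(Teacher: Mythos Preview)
Your proposal correctly identifies the relevant toolkit --- Lemma~\ref{lem:ideal} to pass to an ideal with Cohen quotient, Lemma~\ref{lem:gch} for the large-cardinal properties of the completeness $\lambda$, and Usuba's theorem on the $\mu$-approximation property --- but it does not assemble them into a proof, and the contradictions you propose are not well-formed.

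First, you are missing the case split on whether $\lambda_{\calF} > \mu$ or $\lambda_{\calF} = \mu$. The paper handles these by two genuinely different arguments (Lemmas~\ref{strongmucc} and~\ref{fragilemm}), and the distinction matters: when $\lambda_{\calF}=\mu$, the completeness of the ideal equals the closure of the Cohen forcing, so the ``small forcing doesn't affect $\lambda$-complete ideals'' heuristic fails outright and one must instead work inside the generic ultrapower target model $M$, using Hamkins' observation that $V_\mu^N = V_\mu^V$ (where $M=N[G][H']$) to apply approximation to a \emph{name} for a new subset of $\mu$.

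Second, and more seriously, your ``main obstacle'' paragraph contains the actual gap. You assert that the $\mu$-approximation property lets you reflect $\calI$ to an ideal $\calI\cap V$ in $V$ whose quotient \emph{still densely embeds a Cohen algebra}; this is exactly the step that requires real work, and approximation alone does not give it. The paper does define $\calJ=\{X:\mathbbm 1\Vdash\check X\in\dot\calI\}$ in $V$, but to make the approximation argument bite one needs that $\p(B)/\calJ$ is $\mu$-\emph{distributive} for a suitable positive $B$ --- and this is proved via Foreman's Duality Theorem (Theorem~\ref{foreman}), which identifies $\bbP * \p(\kappa)/\bar\calJ$ with $\p(\kappa)/\calJ * j(\bbP)$ and lets one transfer the $\mu$-closure of $\Add(\mu,\theta)$ on the $V[G]$-side to distributivity on the $V$-side. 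Only then does one get an object (the $\p(B)/\calJ$-generic $H$) all of whose ${<}\mu$-pieces lie in $V$, living inside a $\bbP*\dot\Add(\mu,\theta)$-extension, so that Usuba forces $H\in V$, a contradiction. Your proposal never isolates such an object.

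Finally, your fallback contradiction --- that $\spec(\mu,\kappa)\neq\emptyset$ would already hold in $V$ and then $\bbP$ would ``split'' the ground-model mif --- is a non-sequitur: even if true, it says nothing about the $V^{\bbP}$-mif $\calF$, which need not come from $V$, and the theorem makes no hypothesis about $V$ at all.
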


The theorem follows from the conjunction of two lemmas, each of which gives a stronger conclusion about a special case.

\begin{lem}
\label{strongmucc}
Let $\mu$ be a regular cardinal and $\bbP$ be a nontrivial strongly $\mu$-c.c.\ forcing.
Then $\bbP$ forces that there is no $(\mu,\kappa)$-dmif $\calF$ with $\lambda_{\calF} > \mu$.  
\end{lem}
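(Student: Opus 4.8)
The plan is to work in the extension $V[G]$ by the strongly $\mu$-c.c.\ forcing $\bbP$ and derive a contradiction from the assumption that some $\calF$ is a $(\mu,\kappa)$-dmif there with $\lambda_\calF > \mu$. By Lemma~\ref{lem:ideal} applied in $V[G]$, from such an $\calF$ we obtain a uniform $\mu$-complete ideal $\bar\calI$ on $\kappa$ with $\p(\kappa)/\bar\calI \cong \Add(\mu,\theta)$ for $\theta = |\calF|$, and — crucially — by Lemma~\ref{lem:gch}(\ref{muinacc}) the completeness $\lambda_\calF$ of this ideal satisfies $\alpha^\beta < \lambda_\calF$ for all $\alpha < \lambda_\calF$, $\beta < \mu$; since $\lambda_\calF > \mu$ this makes $\lambda_\calF$ in particular inaccessible, so $\mu < \lambda_\calF \le \kappa$. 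The first key step is to use the $\mu$-completeness of $\bar\calI$ together with the small chain condition of $\bbP$ to find a ground-model ideal $\calI$ such that $\bar\calI$ is exactly (or contains) the ideal generated by $\calI$; concretely, since $\bbP$ is $\nu$-c.c.\ for $\nu < \mu$ (or $|\bbP| < \mu$), every $\bar\calI$-small set is covered by a ground-model set that is a union of $<\mu$ many slices, and one checks the collection $\calI = \{Y \in V : \bbP \forces Y \in \bar\calI\}$ (equivalently, $\bar\calI \cap V$) is a $\mu$-complete uniform ideal on $\kappa$ in $V$ whose generated ideal in $V[G]$ is $\bar\calI$, using that $\bbP$ adds no new $<\mu$-sequences from $V$ and that names for $\bar\calI$-small sets are decided by antichains of size $<\mu$.

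The second key step is to apply Foreman's Duality Theorem (Theorem~\ref{foreman}) in the ground model to $\calI$ and the $\mu$-c.c.\ forcing $\bbQ = \bbP$: this requires $\calI$ to be precipitous in $V$, which follows from the fact that in $V[G]$ the quotient $\p(\kappa)/\bar\calI \cong \Add(\mu,\theta)$ is $\mu$-closed hence precipitous, and precipitousness of $\bar\calI$ in a $\mu$-c.c.\ extension reflects down to precipitousness of $\calI$ in $V$ (alternatively, one argues directly that a non-wellfounded generic ultrapower of $V$ by $\calI$ would give one for $\bar\calI$). Duality then yields a dense embedding
$$d : \bbP * \p(\kappa)/\bar\calI \to \calB\bigl(\p(\kappa)/\calI * j_{\dot G}(\bbP)\bigr),$$
so in particular $\bbP * \Add(\mu,\theta)$ (as computed in $V[G]$) is forcing-equivalent below a condition to $\p(\kappa)/\calI * j_{\dot G}(\bbP)$. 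The third step is to compute the critical point $\lambda$ of the generic embedding $j : V \to M$ coming from $\calI$: since $\bar\calI$ has completeness $\lambda_\calF$ in $V[G]$ and $\bbP$ is small relative to $\mu \le \lambda_\calF$, the completeness of $\calI$ in $V$ is also $\lambda_\calF =: \lambda$, and this is $\crt(j)$.

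The contradiction comes from the structure of $\p(\kappa)/\calI * j_{\dot G}(\bbP)$. Forcing first with $\p(\kappa)/\calI$ — which is $\mu$-closed and $\mu^+$-c.c., adding a subset of $\mu$ but no new $<\mu$-sequences and collapsing nothing below $\lambda$ — we reach $M$ with $\crt(j) = \lambda$; then $j_{\dot G}(\bbP)$ is, from the point of view of $M$, a forcing of size $<\,j(\mu) = \mu$ (since $\bbP$ has size $<\mu$, or is $\nu$-c.c.\ for $\nu<\mu$, and $\mu < \lambda = \crt(j)$ so $j(\mu) = \mu$ and $j(\bbP)$ has the same cardinality/chain condition bound below $\mu$ in $M$). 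Hence the two-step iteration $\p(\kappa)/\calI * j_{\dot G}(\bbP)$ is, below a condition, forcing-equivalent to $\p(\kappa)/\calI$ composed with a forcing of size $<\mu$, which by a standard absorption/factoring argument cannot be isomorphic to the homogeneous measure-algebra-like poset $\bbP * \Add(\mu,\theta) \cong \bbP * \Add(\mu,\theta)$; more precisely, $\bbP * \Add(\mu,\theta)$ densely below every condition has a perfect set of incompatible elements at every level and in particular is nowhere $\mu$-closed-by-small, whereas $\p(\kappa)/\calI$ is $\mu$-closed and $j_{\dot G}(\bbP)$ is too small to destroy this. I expect the main obstacle to be making this last clash fully rigorous — i.e.\ correctly tracking which poset is $\mu$-distributive/closed and which is not across the duality isomorphism, and in particular verifying that $\lambda_\calF > \mu$ is exactly what forces the right-hand side to retain $\mu$-closure that the left-hand side $\bbP * \Add(\mu,\theta)$ lacks (since $\Add(\mu,\theta)$ is $\mu$-closed but $\bbP$, being nontrivial and strongly $\mu$-c.c., adds a real-like object at the bottom, collapsing the closure). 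The cleanest route may be to phrase the contradiction in terms of the critical point: $\bbP$ adds a new subset of some $\nu < \mu$ (or is nontrivial of size $<\mu$, hence adds a new subset of some finite or small set), so $\bbP * \Add(\mu,\theta)$ adds a new bounded subset of $\mu$ appearing already at a bounded stage, while $\p(\kappa)/\calI * j_{\dot G}(\bbP)$ adds its first new subset of $\mu$ only "after" the $\mu$-closed part $\p(\kappa)/\calI$ and via a forcing $j_{\dot G}(\bbP)$ that lives below $\mu$ in $M$ — one then pushes these through $d$ to a genuine contradiction about when a particular generic real appears.
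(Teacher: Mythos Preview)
Your proposal misreads the hypothesis of the lemma. Lemma~\ref{strongmucc} assumes only that $\bbP$ is \emph{strongly $\mu$-c.c.}, i.e.\ $\mu$-c.c.\ and not adding branches to $\mu$-Suslin trees. You repeatedly invoke the stronger assumption ``$|\bbP|<\mu$ or $\bbP$ is $\nu$-c.c.\ for some $\nu<\mu$'' --- that is the hypothesis of Theorem~\ref{fragile}, not of this lemma. Under the correct hypothesis $\bbP$ may have size far above $\mu$ (think of $\Add(\omega,\kappa)$ when $\mu=\omega_1$), may add new bounded subsets of $\mu$, and $j(\bbP)$ need not be small in any useful sense; so your claims that ``$\bbP$ adds no new ${<}\mu$-sequences,'' that ``$j_{\dot G}(\bbP)$ has size $<\mu$,'' and the resulting size-based contradiction all collapse.

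Even setting the hypothesis aside, two further points need work. First, the ideal $\calJ=\{Y\in V:1\Vdash Y\in\dot{\bar\calI}\}$ does \emph{not} in general generate $\bar\calI$ in $V[G]$; the paper shows only $\bar\calJ\subseteq\calI$ and then uses $\mu^+$-saturation to find a positive set on which they agree. Second, and more seriously, you never isolate the mechanism that actually produces the contradiction. The paper's argument runs as follows: after Duality, one proves that the ground-model quotient $\p(B)/\calJ$ is $\mu$-\emph{distributive} (not $\mu$-closed --- this is a nontrivial computation comparing the two ultrapower embeddings $j_0,j_1$ and tracking where a putative new function $f:\nu\to\mu$ lives). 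Then either this quotient is atomic somewhere, which via Duality forces $\p(B')/\bar\calJ\cong j(\bbP)/G$ to be $\mu$-saturated, contradicting that it is $\Add(\mu,\theta)$; or it is nontrivial, in which case its generic $H$ is $\mu$-approximated by $V$, and Usuba's theorem --- which says precisely that strongly $\mu$-c.c.\ followed by $\mu$-strategically-closed has the $\mu$-approximation property --- forces $H\in V$, a contradiction. The strong $\mu$-c.c.\ hypothesis is used \emph{only} at this last step, through the approximation property; it is not a smallness assumption on $\bbP$ and cannot be exploited as one.
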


\begin{proof}
Let $G \subseteq \bbP$ be $\bbP$-generic over $V$.  Suppose that in $V[G]$, there is a $(\mu,\kappa)$-dmif $\calF$ of size $\theta$ with $\lambda_{\calF} =\lambda>\mu$.  By Lemma \ref{lem:ideal}, $\p(\kappa)/\calI_\calF$ is forcing-equivalent to $\Add(\mu,\theta)$.  By Lemma \ref{lem:gch}, $V[G] \models 2^{<\mu} = \mu$.  This must also be true in $V$, since otherwise $\bbP$ would collapse $\mu^+$.


In $V[G]$, let $\calI$ be the density ideal $\calI_\calF$, which has completeness $\lambda$ and is $\mu^+$-saturated.  Let $\dot{\calI}$ be a $\bbP$-name for $\calI$, and in $V$, let $\calJ$ be the ideal $\{ X \subseteq \kappa : 1 \Vdash \check X \in \dot{\calI} \}$.  $\calJ$ is $\lambda$-complete.  Let $\lambda'\geq\lambda$ be the completeness of $\calJ$.  Let $e : \p(\kappa)/\calJ \to \calB(\bbP) * \p(\kappa) / \dot{\calI}$ be the map $[X]_{\calJ} \mapsto (|| \check X \notin \dot{\calI} ||, [\check X]_{\dot{\calI}})$.  It is easy to check that $e$ is order-preserving and antichain-preserving.  Thus $\calJ$ is $\mu^+$-saturated.


Let $\bar{\calJ}$ be the ideal generated by $\calJ$ in $V[G]$.  By the definition of $\calJ$, $\bar\calJ \subseteq \calI$.  Working in $V[G]$, let $\{X_\alpha : \alpha < \delta\leq \mu \}$ be a maximal antichain of $\bar\calJ$-positive sets that are in $\calI$.  Then $X= \bigcup_{\alpha<\delta} X_\alpha \in \calI$ as well.  If $Y \subseteq (\kappa \setminus X)$ is $\bar\calJ$-positive, then $Y$ is $\calI$-positive, so $\calI = \bar\calJ \restriction (\kappa \setminus X)$.  This implies in particular that $\lambda'=\lambda$.  Let $\dot A$ be a $\bbP$-name such that $\Vdash_{\bbP} \dot\calI = \bar\calJ \restriction \dot A$.

Let $d : \bbP * \p(\kappa)/\bar\calJ \to \calB\left(\p(\kappa)/\calJ * j_{\dot H}(\bbP)\right)$ be the map given by Theorem \ref{foreman}.  
Let $(B,\dot q) \leq d(1,[\dot A]_{\bar\calI})$.  We claim that $\p(B)/\calJ$ is a $\mu$-distributive partial order in $V$.  Since $\p(B)/\calJ$ has the $\mu^+$-c.c., it suffices to show that it adds no new functions $f : \nu \to \mu$ for $\nu<\mu$ (see \cite[p.\ 246]{TJ}).

Let $H \subseteq \p(B)/\calJ$ be any generic.  Let $G' \subseteq j(\bbP) \restriction q$ be generic over $V[H]$.  Let $\nu<\mu$ and let $f : \nu \to \mu$ be a function in $V[H*G'] \setminus V$.  It suffices to show that $f \notin V[H]$.  Let $G * \bar H \subseteq \bbP * \p(\dot A)/\bar\calJ$ be the generic filter generated by $d^{-1}$.  Since $\p(\kappa)/\calI = \p(A)/\bar\calJ$ is equivalent to a $\mu$-closed forcing, $f \in V[G]$.  Let $\tau \in V$ be a $\bbP$-name such that $f = \tau^G$, and $\Vdash^V_\bbP \tau \notin V$.  

Let $j_0 : V \to M_0 \cong V^\kappa/H$ and $j_1 : V \to M_1 \cong V[G]^\kappa/\bar H$ be the generic ultrapower embeddings.  Since $\crt(j_0) = \crt(j_1) = \lambda > \mu$, $j_1(f) = f$.  By elementarity, $\Vdash^{M_0}_{j_0(\bbP)} j_0(\tau) \notin M_0$.  
Theorem \ref{foreman} also gives that $j_1 \restriction V = j_0$, $j_1(G) = G'$, and $M_1 = M_0[G']$.  Thus $j_0(\tau)^{G'} = j_1(f) = f$, and $f \notin M_0$.  Since $M_0$ is $\lambda$-closed in $V[H]$, $f \notin V[H]$.


To finish, we have two cases to consider.  In the first case, $\p(B)/\calJ$ is a trivial forcing below some condition.  This is equivalent the existence of some $\calJ$-positive $B' \subseteq B$ such that the dual of $\calJ \cap \p(B')$ is an ultrafilter on $B'$.  But then, Theorem \ref{foreman} tells us that $\bbP$ forces $\p(B')/\bar\calJ \cong j(\bbP)/G$, so $\bar\calJ \restriction B'$ is forced to be $\mu$-saturated.  But this is false, since $\p(A)/\bar\calJ$ is equivalent to $\Add(\mu,\theta)$, which is nowhere $\mu$-c.c.

In the second case, $\p(B)/\calJ$ is a nontrivial forcing. Let $H * G' \subseteq \p(B)/\calJ * j_{\dot H}(\bbP)$ be generic.  Since $\p(B)/\calJ$ is $\mu$-distributive, $H$ itself has the property that $H \cap a \in V$ for all $a \in \p_\mu(\p(B)/\calJ)^V$.   But $V[H*G']$ can also be written as a $\bbP * \dot\Add(\mu,\theta)$-generic extension, and by Usuba's Theorem, $\bbP * \dot\Add(\mu,\theta)$ has the $\mu$-approximation property.  Thus $H \in V$, a contradiction.
\end{proof}

\begin{remark}
The hypothesis of Lemma \ref{strongmucc} is optimal in the following sense.  Suppose $\calF$ is a $(\mu,\kappa)$-dmif with $\lambda_\calF > \mu$.  If there exists a forcing that is $\mu$-c.c.\ but not strongly $\mu$-c.c., then there exists one that preserves the maximality of $\calF$.  This is because any $\mu$-c.c.\ forcing that is not strongly $\mu$-c.c.\ projects to a $\mu$-Suslin tree.  There exists a $\lambda$-complete ideal $\calI$ on some $\kappa$ such that $\p(\kappa)/\calI \cong \Add(\mu,\theta)$, for some $\theta$.   If $T$ is a $\mu$-Suslin tree, then $|T| = \mu< \lambda$.  If $G \subseteq T$ is generic, then $\p(\kappa)^V/\calI$ is dense in $\p(\kappa)^{V[G]}/\bar\calI$.  Since $T$ adds no ${<}\mu$-sequences, $\p(\kappa)/\bar\calI$ is still equivalent to $\Add(\mu,\theta)$ as defined in $V[G]$.
\end{remark}

\begin{lem}
\label{fragilemm}
Suppose $\mu$ is a regular cardinal, $\bbP$ is a nontrivial forcing either of size $<\mu$ or satisfying the $\nu$-c.c.\ for some $\nu<\mu$, and $\Vdash_{\bbP} \dot{\bbQ}$ is $\mu$-strategically-closed.
Then $\bbP * \dot{\bbQ}$ forces that there is no $(\mu,\kappa)$-dmif $\calF$ with $\lambda_{\calF} = \mu$.
In particular, $\bbP$ forces that $\spec(\mu,\mu) = \emptyset$ in any $\mu$-strategically-closed extension.
\end{lem}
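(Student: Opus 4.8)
The plan is to mimic the structure of the proof of Lemma \ref{strongmucc}, but now track what happens when $\lambda_\calF = \mu$. Suppose toward a contradiction that in $V[G*H]$ (where $G$ is $\bbP$-generic and $H$ is $\dot\bbQ[G]$-generic) there is a $(\mu,\kappa)$-dmif $\calF$ of size $\theta$ with $\lambda_\calF = \mu$. By Lemma \ref{lem:ideal} we get a uniform $\mu$-complete ideal $\calI$ on $\kappa$ with $\p(\kappa)/\calI$ forcing-equivalent to $\Add(\mu,\theta)$, and by Lemma \ref{lem:gch} we have $2^{<\mu}=\mu$ in $V[G*H]$, hence also in $V$ (otherwise $\bbP*\dot\bbQ$, and in fact already $\bbP$, would collapse $\mu^+$, since $\dot\bbQ$ is $\mu$-strategically-closed and adds no bounded subsets of $\mu$). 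Note that since $\lambda_\calF = \mu = \crt$ of the generic ultrapower, we are now in the genuinely different regime where the critical point is exactly $\mu$.

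Next I would pull the ideal back to $V$. Since $\bbP$ is strongly $\mu$-c.c.\ (being $\nu$-c.c.\ for some $\nu<\mu$, or of size $<\mu$ hence $\mu$-c.c.\ and, being small, not projecting onto a $\mu$-Suslin tree), and $\dot\bbQ$ is $\mu$-strategically-closed, Usuba's Theorem gives that $\bbP*\dot\bbQ$ has the $\mu$-approximation property. Let $\dot\calI$ be a $\bbP*\dot\bbQ$-name for $\calI$ and set $\calJ = \{ X\subseteq\kappa : 1\Vdash \check X\in\dot\calI\}$ in $V$; this is $\mu$-complete. Arguing as in Lemma \ref{strongmucc}, the map $[X]_\calJ\mapsto(\Vdash \check X\notin\dot\calI,\ [\check X]_{\dot\calI})$ embeds $\p(\kappa)/\calJ$ into $\calB(\bbP*\dot\bbQ)*\p(\kappa)/\dot\calI$ preserving order and antichains, so $\calJ$ is $\mu^+$-saturated; in particular $\calJ$ is precipitous. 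Passing to a positive set I may assume $\calI$ is generated over $V$ by $\calJ$ restricted to that set, so $\p(\kappa)^{V[G*H]}/\calI$ is the $\bar\calJ$-quotient.

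Now apply Foreman's Duality Theorem (Theorem \ref{foreman}) to $\calJ$ and the $\mu$-c.c.\ (hence small-chain-condition) part, and also keep the $\mu$-strategically-closed tail $\dot\bbQ$ in play; the point is that $\p(\kappa)/\bar\calJ$ is forced to be $\Add(\mu,\theta)$, which is \emph{nowhere} $\mu$-c.c. Taking a generic $\bar H$ for $\p(\kappa)/\calJ$ and chasing it through $d$, I obtain a generic for $(\bbP*\dot\bbQ)*\Add(\mu,\theta)$; but the crucial distributivity claim — that $\p(B)/\calJ$ is $\mu$-distributive in $V$ for a suitable condition $B$ — needs reworking, because now $\crt(j)=\mu$ rather than $>\mu$, so one cannot simply invoke $\lambda$-closure of the ultrapower to kill new functions $\nu\to\mu$. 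This is the main obstacle. The fix I would pursue: since $\Add(\mu,\theta)$ is itself $\mu$-distributive and $\p(\kappa)/\bar\calJ\cong\Add(\mu,\theta)$, the quotient $\p(B)/\calJ$ \emph{as computed in $V$} adds no new $<\mu$-sequences once we factor through $\bbP*\dot\bbQ$; combined with the $\mu$-approximation property of $\bbP*\dot\bbQ$ and the strategic closure of $\dot\bbQ$, any generic $\bar H$ for $\p(B)/\calJ$ over $V$ then satisfies $\bar H\cap a\in V$ for all $a\in\p_\mu(\p(B)/\calJ)^V$, and since $V[G*H*\bar H']$ is a $(\bbP*\dot\bbQ)*\dot\Add(\mu,\theta)$-generic extension, the $\mu$-approximation property forces $\bar H\in V$ — contradicting nontriviality of $\p(B)/\calJ$ (the case where this quotient is trivial below a condition is handled exactly as in Lemma \ref{strongmucc}, since triviality would make $\bar\calJ{\restriction}B'$ $\mu$-saturated, impossible for $\Add(\mu,\theta)$). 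The final sentence of the lemma, that $\bbP$ forces $\spec(\mu,\mu)=\emptyset$ in any $\mu$-strategically-closed extension, follows by combining this with Lemma \ref{strongmucc} applied in the $\bbP$-extension (noting $\bbP$ remains strongly $\mu$-c.c.) to rule out the case $\lambda_\calF>\mu$, together with Corollary \ref{mu.kappa.cof} and Proposition \ref{ineq2} to see that for $\kappa=\mu$ any dmif has $\lambda_\calF\geq\mu$, exhausting all possibilities.
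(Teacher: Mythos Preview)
Your approach has a genuine gap, and the paper takes a substantially different route.

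First, two technical problems with mimicking Lemma~\ref{strongmucc}.  Pulling $\calI$ back through $\bbP*\dot\bbQ$ to get $\calJ$ and embedding $\p(\kappa)/\calJ$ into $\calB(\bbP*\dot\bbQ)*\p(\kappa)/\dot\calI$ does not give $\mu^+$-saturation of $\calJ$: the forcing $\dot\bbQ$ is only $\mu$-strategically-closed, so $\dot\bbQ*\p(\kappa)/\dot\calI$ need not be $\mu^+$-c.c.\ (for instance $\dot\bbQ$ could be $\Col(\mu,\mu^+)$).  Without saturation you cannot invoke precipitousness or Foreman's Duality.  Second, and more fundamentally, your ``fix'' for distributivity of $\p(B)/\calJ$ does not work.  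The fact that $\p(\kappa)/\bar\calJ\cong\Add(\mu,\theta)$ is $\mu$-distributive in $V[G*H]$ says nothing about the $V$-quotient $\p(B)/\calJ$.  In Lemma~\ref{strongmucc} the distributivity argument uses $\crt(j_1)>\mu$ to conclude $j_1(f)=f$ for $f:\nu\to\mu$ with $\nu<\mu$; when $\crt(j_1)=\mu$ this step simply fails, and you have offered no replacement.

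The paper avoids the pullback entirely and instead uses a Hamkins-style approximation argument applied inside the generic ultrapower.  First it reduces to the case $|\bbP|<\mu$ (when $\bbP$ is $\nu$-c.c.\ for $\nu<\mu$, one uses that $\mu$ is strongly inaccessible by Lemma~\ref{lem:gch} to factor $\bbP$ as a small piece times a distributive tail which must be trivial).  Then, with $j:V[G][H]\to M$ the generic embedding from $\calI$ and $M=N[G][H']$, the smallness of $\bbP$ gives $V_\mu^N=V_\mu^V$.  Taking $X\subseteq\mu$ in $V[G][H][K]\setminus V[G]$, one has $X\in N[G]$ (since $j(\bbQ)$ is $j(\mu)$-strategically-closed there), so a $\bbP$-name $\dot X\in N$ with $\dot X\subseteq V_\mu^N=V_\mu^V$.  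Every $a\in\p_\mu(V_\mu)^V$ then has $a\cap\dot X\in V$, so the $\mu$-approximation property of $\bbP*\dot\bbQ*\dot\Add(\mu,\theta)$ forces $\dot X\in V$, whence $X\in V[G]$---a contradiction.  The key idea you are missing is to apply approximation not to a generic filter for a pulled-back quotient, but to the $\bbP$-\emph{name} of a new subset of $\mu$, using the agreement $V_\mu^N=V_\mu^V$ which is available precisely because $|\bbP|<\mu$.
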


The following argument owes a debt to Hamkins~\cite{Hamkins}.

\begin{proof}
First let us show that if $\bbP$ is $\nu$-c.c.\ for some $\nu<\mu$ and $\bbP * \dot\bbQ$ forces that there is a $(\mu,\kappa)$-dmif $\calF$ with $\lambda_{\calF} = \mu$, then in fact $|\bbP| < \mu$.  Assume otherwise.  By Lemma \ref{lem:gch}, $\mu$ is strongly inaccessible.  Let $\eta<\mu$ be such that $\Vdash_\bbP 2^\nu \leq \eta$.  Let $\{ \tau_\alpha : \alpha < \eta \}$ be a complete set of representatives for the $\bbP$-names for subsets of $\nu$.  Let $\theta$ be a sufficiently large cardinal and let $M \prec H_\theta$ contain $\bbP,\mu,\nu,\{\tau_\alpha : \alpha < \eta \}$ and be ${<}\nu$-closed and of size $\leq \eta$.  Then $\bbP_0 = M \cap \bbP$ is a regular suborder of $\bbP$, and $\bbP \cong \bbP_0 * \dot\bbP_1$, where $\dot\bbP_1$ is forced to be $\nu^+$-distributive.  (Using \cite[p.\ 246]{TJ}, the $\nu$-c.c.\ implies that it suffices for $\dot\bbP_1$ to add no subsets of $\nu$.)  Let $G \subseteq \bbP_0$ be generic.  If $\bbP_1$ is a nontrivial forcing, then we can inductively construct a tree of height $\nu^+$ with levels of size $<\nu$, such that every node splits.  The nontriviality of $\bbP_1$ allows us to split at successor stages, the distributivity allows us to continue at limit stages, and the chain condition ensures that the levels never grow too large.  But it is a well-known fact that such trees do possess cofinal branches, and this contradicts the $\nu$-c.c.\ of $\bbP_1$.

Now suppose $G * H \subseteq \bbP * \dot{\bbQ}$ is generic over $V$.  Assume towards a contradiction that in $V[G][H]$, for some $\kappa$, there is an ideal $\calI$ on $\kappa$ with completeness $\mu$ such that $\p(\kappa)/\calI$ is equivalent to $\Add(\mu,\theta)$ for some $\theta$.  By the previous paragraph, we can assume $|\bbP|<\mu$.  By Lemma \ref{lem:gch}, $\mu$ is strongly inaccessible in $V[G][H]$, and so it is as well in $V$ and in any $\mu$-distributive extension of $V[G][H]$.  Let $K$ be $\Add(\mu,\theta)$-generic over $V[G][H]$.

There is an embedding $j : V[G][H] \to M \subseteq V[G][H][K]$, where $\crt(j) = \mu$, and $M$ is a $\mu$-closed transitive class in $V[G][H][K]$.  Let $X \subseteq \mu$ be a set in $V[G][H][K] \backslash V[G]$, which is necessarily in $M$.  Write $M$ as $N[G][H']$.

For a cobounded set of $\alpha<\mu$, $V_\alpha^N = V_\alpha^V$, and thus $V_\mu^N = V_\mu^V$.  Since $j(\bbQ)$ is $j(\mu)$-closed in $N[G]$, $X \in N[G]$.
Let $\dot X \in N$ be a $\bbP$-name for $X$, which we can take so that $\dot X \subseteq V_\mu^N = V_\mu^V$.

For all $a \in \p_\mu(V_\mu)^V$, $a \cap \dot X \in V_\mu^N = V_\mu^V$.  
By Usuba's Theorem, $\bbP * \dot{\bbQ} * \dot{\Add}(\mu,\theta)$ has the $\mu$-approximation property.  Thus $\dot X \in V$.
Therefore, $X = \dot X^G \in V[G]$, which is a contradiction.
\end{proof}

\subsection{Indestructibility}

\begin{thm}
\label{indest}
Suppose $\kappa$ is supercompact.  Then there is a forcing extension in which  for all $\kappa$-directed-closed posets $\bbP$ that force $2^\kappa < \aleph_\eta$, 
where $\eta = (\kappa^{++})^{L(\p(\kappa))}$, $\bbP$ forces $2^\kappa \in \spec(\kappa,\kappa)$.
\end{thm}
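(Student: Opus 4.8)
The plan is to start from a supercompact $\kappa$ and perform a Laver-style preparation that makes $\kappa$'s relevant combinatorial structure indestructible, and then to realize a $(\kappa,\kappa)$-dmif via Kunen's ideal characterization (Lemma~\ref{lem:ideal}) together with the partial converse (Lemma~\ref{converse}). Concretely, I would first force with the Laver preparation $\bbP_\kappa$ to make $\kappa$ supercompact and, moreover, so that the supercompactness is indestructible under $\kappa$-directed-closed forcing. In that model, fix any $\kappa$-directed-closed $\bbP$ forcing $2^\kappa < \aleph_\eta$ with $\eta = (\kappa^{++})^{L(\p(\kappa))}$. Working in $V^{\bbP}$, I want to produce a $\kappa$-complete uniform ideal $\calI$ on some $Z$ with $\p(Z)/\calI \cong \Add(\kappa,2^\kappa)$ and with a cofinal subfamily of size $\leq 2^\kappa$; then Lemma~\ref{converse} gives $2^\kappa \in \spec(\kappa,\kappa)$.

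The natural source of such an ideal is a generic embedding coming from the supercompactness of $\kappa$. Here I would use the standard fact (à la Foreman--Magidor--Shelah / Kunen's original measurable argument, lifted to the supercompact level) that after suitably collapsing below a measurable-above witness, one gets a saturated-type ideal whose quotient is exactly a Cohen algebra. More precisely: using indestructibility, $\kappa$ remains supercompact in $V^{\bbP}$, so pick $j : V^{\bbP} \to M$ with critical point $\kappa$ and $M^{2^\kappa} \subseteq M$. Define $\calI$ on $Z = \p_\kappa(2^\kappa)$ (or on $\kappa$ itself if one prefers the literal $(\kappa,\kappa)$-mif picture via Lemma~\ref{lem:ideal}, which converts any $\theta \in \spec(\kappa,\kappa)$ situation into one on $\kappa$) by $X \in \calI \iff [\id] \notin j(X)$ modulo the appropriate generic filter. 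The key computation is that the Boolean completion of $\p(Z)/\calI$ is isomorphic to $\Add(\kappa,2^\kappa)$: $\kappa$-closure gives $\kappa$-distributivity of the quotient, the mutual genericity/homogeneity of the embedding gives that it adds a generic Cohen subset, and the bound $2^\kappa < \aleph_\eta$ with $\eta = (\kappa^{++})^{L(\p(\kappa))}$ is precisely what is needed to force the quotient to have the $\kappa^+$-c.c.\ (hence to be the \emph{small} Cohen algebra $\Add(\kappa,2^\kappa)$ rather than something larger), by a Kunen-style argument that $L(\p(\kappa))$-definability pins down the saturation. Once $\p(Z)/\calI \cong \Add(\kappa,2^\kappa)$ and $\calI$ has a $\subseteq$-cofinal subset of size $\leq 2^\kappa$ (which follows from $\kappa^+$-saturation as in the Proposition after Corollary~\ref{separation}), apply Lemma~\ref{converse} with $\mu = \kappa$, $\theta = 2^\kappa$ to conclude $2^\kappa \in \spec(\kappa,\kappa)$.

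The main obstacle, and the place where the hypothesis $2^\kappa < \aleph_\eta$ with $\eta = (\kappa^{++})^{L(\p(\kappa))}$ does real work, is arranging that the generically-obtained ideal's quotient is \emph{exactly} $\Add(\kappa,2^\kappa)$ and not merely \emph{some} $\kappa$-distributive $\kappa^+$-c.c.\ forcing that happens to add $2^\kappa$-many Cohen subsets of $\kappa$: one needs a rigidity/absoluteness argument, essentially Kunen's observation that in $L(\p(\kappa))$ the structure $\p(\kappa)/\calI$ is canonically determined and, being a $\kappa$-distributive $\kappa^+$-c.c.\ algebra adding a Cohen set, must be Cohen (using homogeneity and a back-and-forth on maximal antichains). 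The role of $\eta$ is to guarantee that $L(\p(\kappa))$ still ``sees'' enough of the powerset to carry out this identification, i.e.\ that the relevant $\kappa^+$-c.c.\ holds in the sense computed inside $L(\p(\kappa))$; if $2^\kappa$ were too large relative to $\eta$, the quotient could fail to be $\kappa^+$-c.c.\ and the isomorphism with $\Add(\kappa,2^\kappa)$ would break. A secondary technical point is checking that the Laver-style preparation $\bbP_\kappa$ does not itself destroy the needed large-cardinal configuration above $\kappa$ and is compatible with the subsequent $\kappa$-directed-closed $\bbP$; this is routine given the standard indestructibility machinery but must be stated carefully. Finally, one should note that, by Lemma~\ref{lem:ideal}, whatever $Z$ one works over can be transported down to a genuine $(\kappa,\kappa)$-dmif on $\kappa$ itself, so the statement ``$2^\kappa \in \spec(\kappa,\kappa)$'' is obtained in the literal sense of Definition~\ref{def.max.ind}.
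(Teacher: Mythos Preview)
Your high-level strategy is right---use a Laver-style preparation, produce in $V^{\bbP}$ a normal $\kappa$-complete ideal whose quotient is $\Add(\kappa,2^\kappa)$, and apply Lemma~\ref{converse}. But two of the load-bearing steps are misidentified, and the argument as written would not go through.

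\textbf{The preparation is not the standard Laver one.} If you simply make $\kappa$ indestructibly supercompact and then pass to $V^{\bbP}$, you have a supercompactness measure whose dual ideal is \emph{prime}; its quotient is trivial, not Cohen. There is no ``standard fact'' that the quotient becomes $\Add(\kappa,2^\kappa)$ for free, and no back-and-forth or homogeneity argument that identifies an arbitrary $\kappa$-distributive $\kappa^+$-c.c.\ algebra with Cohen. The paper's preparation is a \emph{modified} Laver iteration: at each active stage $\alpha$, one forces with $f(\alpha)\times\Add(\alpha,\beta_\alpha)$, not just $f(\alpha)$. The effect is that in $M$, stage $\kappa$ of $j(\bbP_\kappa)$ is $\bbQ\times\Add(\kappa,\lambda)$ where $\lambda=2^\kappa$ in $V[G*H_0]$. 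One then defines $\calI=\{X\subseteq\kappa:1\Vdash_{\Add(\kappa,\lambda)}\kappa\notin j(X)\}$ in $V[G*H_0]$; the Boolean map $[X]\mapsto\|\kappa\in j(X)\|$ is a regular embedding of $\p(\kappa)/\calI$ into $\Add(\kappa,\lambda)$, which already yields $\kappa^+$-c.c.\ without any appeal to $\eta$. The substantive point is the \emph{reverse} direction: one must show that the Cohen generic $H_1$ can be recovered from the generic ultrafilter $\calW$, so that the embedding is in fact a dense one.

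\textbf{The role of $\eta$ is not saturation.} The bound $2^\kappa<\aleph_\eta$ is used only in the recovery step. If $i:V[G*H_0]\to N$ is the ultrapower by $\calW$ and $k:N\to M[G'*H_0']$ is the factor map, then $\crt(k)>\kappa^+$ because $\p(\kappa)\subseteq N$; hence $\crt(k)\geq(\kappa^{++})^N\geq(\kappa^{++})^{L(\p(\kappa))}=\eta$. The set $\calC$ of cardinals $\leq\lambda$ thus has order-type below $\crt(k)$, so $k$ fixes $\calC$ pointwise, whence $\crt(k)>\lambda$. That in turn implies $k$ fixes the object $H_1$ sitting inside $i(G)_\kappa$, so $H_1$ is definable from $\calW$ and $\p(\kappa)/\calI\cong\Add(\kappa,\lambda)$. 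Your explanation that $\eta$ controls the chain condition, or that an $L(\p(\kappa))$-rigidity argument identifies the algebra as Cohen, is not what is happening.
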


\begin{remark} The class of posets in the hypothesis of Theorem~\ref{indest} is cofinal in the class of $\kappa$-directed-closed forcings, since every $\kappa$-directed-closed $\bbP$ is a complete suborder of $\bbP * \dot{\Add}(\kappa^+,1)$, which forces $2^\kappa = \kappa^+$.
\end{remark}

\begin{proof}
We may assume that GCH holds in $V$.  Let $f : \kappa \to V_\kappa$ be
a Laver function (see~\cite[Theorem 20.21]{TJ}).  Inductively define the following Easton-support iteration $\lan \bbP_\alpha,\dot{\bbQ}_\alpha : \alpha < \kappa \ra$.  Suppose we have defined the iteration up to $\alpha$.  If $\alpha$ is inaccessible in $V$, $\bbP_\alpha$ is $\alpha$-c.c., and $f(\alpha)$ is a $\bbP_\alpha$-name for an $\alpha$-directed-closed poset of size $\beta_\alpha$, let $\dot{\bbQ}_\alpha$ be a $\bbP_\alpha$-name for $f(\alpha) \times \Add(\alpha,\beta_\alpha)$.  Otherwise, let $\dot{\bbQ}_\alpha$ be trivial.

Let $G \subseteq \bbP_\kappa$ be generic, and let $\bbQ$ be a $\kappa$-directed-closed poset in $V[G]$ that forces $2^\kappa < \aleph_\eta$, where $\eta = (\kappa^{++})^{L(\p(\kappa))}$.  Let $\lambda$ be the least regular cardinal $\geq \kappa^+ + |\bbQ|$.  By the properties of the Laver function, there is a normal ultrafilter $\calU$ on $\p_\kappa(\lambda)$ such that $j_{\calU}(f)(\kappa) = \dot{\bbQ}$.  Force over $V[G]$ to get a generic $H_0 \times H_1 \subseteq \bbQ \times \Add(\kappa,\lambda)$.  In $M = \mathrm{Ult}(V,\calU)$, the quotient forcing $j(\bbP_\kappa)/ (G * H_0 *H_1)$ is $\lambda^+$-closed.  By the GCH, we can build a generic $K$ for this poset in $V[G*H_0*H_1]$.  Thus we can lift the embedding to $j : V[G] \to M[G']$, where $G' = G*H_0*H_1*K$.  Since $\bbQ$ is $\kappa$-directed-closed and $j(\kappa)>\lambda$, there is a condition $q$ below $j[H_0]$ in $j(\bbQ)$.  By the GCH again, $|j(\lambda)| = \lambda^+$, so we can build a generic $H_0'$ for this poset below $q$ and extend the embedding further to $j : V[G*H_0] \to M[G'*H_0']$.

Over the model $V[G*H_0]$, we only need to force with $\Add(\kappa,\lambda)$ to obtain the lifted embedding as above.  In $V[G*H_0]$, define a normal ideal on $\kappa$ as $\calI = \{X \subseteq \kappa : 1 \Vdash_{\Add(\kappa,\lambda)} \kappa \notin j(X) \}$.  Since $\Add(\kappa,\lambda)$ is $\kappa^+$-c.c., the ideal is seen to be $\kappa^+$-saturated by mapping an $\calI$-positive set $X$ to $|| \kappa \in j(X) ||$.  By normality, this embedding is regular, since if $\lan X_\alpha : \alpha < \kappa \ra$ is a maximal antichain, then $\nabla_\alpha X_\alpha$ is $\calI$-equivalent to the top condition, so it is forced that $\kappa \in j(\nabla_\alpha X_\alpha)$, which means that one of the Boolean values $|| \kappa \in j(X_\alpha) ||$ is in the generic filter.  Thus if we force with $\p(\kappa)/\calI$, a further forcing allows us to obtain the extension of $j$ as above.  If $i : V \to N$ is a generic ultrapower embedding arising from $\calI$, then there is a factor map $k : N \to M$ such that $j = k \circ i$ defined by $k([g]) = j(g)(\kappa)$. This holds because for $X \subseteq \kappa$, $X$ is in the projected generic ultrafilter $\calW \subseteq \p(\kappa)/\calI$ iff $\kappa \in j(X)$ by the definition of the Boolean embedding.

We claim that $\p(\kappa)/\calI$ is actually equivalent to $\Add(\kappa,\lambda)$.  This will complete the proof, since $2^\kappa = \lambda$ in $V[G*H_0]$, so we apply Lemma \ref{converse} to obtain a $(\kappa,\kappa)$-dmif.  To show the claim, it suffices to show that if $H_1 \subseteq \Add(\kappa,\lambda)$ is generic and $\calW$ is the projected generic ultrafilter for $\p(\kappa)/\calI$, then $H_1$ can be recovered from $\calW$.

Let $k : N \to M$ be the factor embedding as above.  Since $\p(\kappa)^{V[G*H_0]} \subseteq N$, all the relevant models agree on $\kappa^+$.  Thus the critical point of $k$ is greater than $\kappa^+$.  Also, $(2^\kappa)^N \geq \lambda$, and $k((2^\kappa)^N) = (2^\kappa)^{M[G'*H_0']} = \lambda$, so $(2^\kappa)^N = \lambda$ and $k(\lambda)=\lambda$.  
Let $\calC$ be the set of cardinals of $V[G*H_0]$ that are $\leq\lambda$.  By hypothesis, $\calC = \calC^{M[G'*H_0']}$ is a set of order-type $<\crt(k)$.
Since $k(\calC^N) = \calC$, we must have that the image $k[\calC^N] = \calC$.  Thus the critical point of $k$ is $>\lambda$.  Now, $G$ is generic for an iteration as defined above, and in $N$ we can examine $i(G)_\kappa$.  This is a filter $h_0 \times h_1$, and $k(h_0 \times h_1) = H_0 \times H_1$.  Since $H_1$ is of rank $<\crt(k)$, we have $h_1=H_1$.  This shows that $H_1$ can be definably recovered from $\calW$, completing the argument.
\end{proof}

\begin{cor}
If there is a supercompact $\kappa$, then for every successor ordinal $\xi < \aleph_\eta$, where $\eta = (\kappa^{++})^{L(\p(\kappa))}$, there is a generic extension in which $\spec(\kappa,\kappa) = \{ \kappa^{+\xi} \}$.
\end{cor}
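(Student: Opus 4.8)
The plan is to realize the required model as the extension of the model $V[G]$ of Theorem~\ref{indest} by Cohen forcing adding $\kappa^{+\xi}$ subsets of $\kappa$. Forcing $2^\kappa = \kappa^{+\xi}$ by \emph{any} $\kappa$-directed-closed poset already puts $\kappa^{+\xi}$ into $\spec(\kappa,\kappa)$ by Theorem~\ref{indest}, but says nothing about other values; the virtue of the specific poset $\Add(\kappa,\kappa^{+\xi})$ is that any $(\kappa,\kappa)$-mif of size $<\kappa^{+\xi}$ would be \emph{trapped} inside a proper inner submodel over which a further nontrivial copy of Cohen forcing at $\kappa$ is still to be applied — and such forcing adds a set splitting every Boolean combination of the family, destroying its maximality.

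In detail: let $V[G]$ be as provided by Theorem~\ref{indest}, let $\eta = (\kappa^{++})^{L(\p(\kappa))^{V[G]}}$, and fix a successor ordinal $\xi$ with $\kappa^{+\xi} < \aleph_\eta$ (this is what the hypothesis on $\xi$ amounts to, since $\kappa = \aleph_\kappa$). Put $\bbP = \Add(\kappa,\kappa^{+\xi})$. This is $\kappa$-directed-closed, and since GCH holds at and above $\kappa$ in $V[G]$ and $\bbP$ is $\kappa^+$-c.c.\ there, $\bbP$ forces $2^\kappa = \kappa^{+\xi}$. Hence by Theorem~\ref{indest}, $\bbP$ forces $\kappa^{+\xi} = 2^\kappa \in \spec(\kappa,\kappa)$. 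Fix $H$ that is $\bbP$-generic over $V[G]$ and set $W = V[G][H]$, so that $2^\kappa = \kappa^{+\xi}$ and $\kappa^{+\xi} \in \spec(\kappa,\kappa)$ hold in $W$. By Proposition~\ref{ineq2}, every $\theta \in \spec(\kappa,\kappa)^W$ satisfies $\kappa < \theta \le \theta^{<\kappa} \le 2^\kappa = \kappa^{+\xi}$, so it remains only to rule out $(\kappa,\kappa)$-mif's of size $\theta$ with $\kappa < \theta < \kappa^{+\xi}$.

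So assume $\calF \in W$ were such a family. Since $\bbP$ is $\kappa^+$-c.c.\ over $V[G]$, every nice $\bbP$-name for a subset of $\kappa$ mentions at most $\kappa$ of the $\kappa^{+\xi}$ Cohen coordinates. Working in $V[G]$, choose a $\bbP$-name for $\calF$, a $\bbP$-name for a bijection of $\theta$ onto $\calF$, and thereby a sequence $\langle \dot X_\alpha : \alpha < \theta \rangle \in V[G]$ of nice names for the members of $\calF$; let $S \in V[G]$ be the union of the coordinate supports of the $\dot X_\alpha$, so $|S| \le \theta \cdot \kappa = \theta < \kappa^{+\xi}$ and $\calF$ belongs to the intermediate extension $W_0 := V[G][H \restriction S]$. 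In $V[G]$ one may write $\bbP \cong \Add(\kappa,S) \times \Add(\kappa, \kappa^{+\xi} \setminus S)$, and the second factor, generic over $W_0$, is isomorphic to $\Add(\kappa,\kappa^{+\xi})$; thus $W$ is obtained from $W_0$ by a nontrivial Cohen forcing at $\kappa$. Because $\Add(\kappa,\kappa^{+\xi}\setminus S)$ is ${<}\kappa$-closed, $\calF$ is still $(\kappa,\kappa)$-independent in $W_0$: the ${<}\kappa$-sized Boolean conditions, the combinations $X_p$, their nonemptiness, and their unboundedness in $\kappa$ are all computed the same way in $W_0$ and in $W$. By the splitting property of Cohen forcing at $\kappa$ used in the proof of Corollary~\ref{separation}, passing from $W_0$ to $W$ adds a set $Y \subseteq \kappa$ splitting every unbounded subset of $\kappa$ lying in $W_0$; in particular $Y$ splits each $X_p$ (which has size $\kappa$), so $\calF \cup \{Y\}$ is a $\kappa$-free family in $W$ properly extending $\calF$ — note $Y \notin W_0$, as otherwise $Y$ would split itself. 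This contradicts the maximality of $\calF$. Hence $\spec(\kappa,\kappa)^W = \{\kappa^{+\xi}\}$. The only point requiring genuine care is the trapping of $\calF$ in $W_0$, i.e.\ arranging the names of the members of $\calF$ together with the coordinate set $S$ to lie in $V[G]$ with $|S|\le\theta$; once this bookkeeping is done, the factorization of $\bbP$ and the splitting property finish the argument with no further obstacle.
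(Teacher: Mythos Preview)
Your proof is correct and follows essentially the same approach as the paper's: force with $\Add(\kappa,\kappa^{+\xi})$ over the model of Theorem~\ref{indest} to get $2^\kappa=\kappa^{+\xi}\in\spec(\kappa,\kappa)$, then trap any smaller $\kappa$-free family in a sub-extension $V[G][H{\restriction}S]$ and use a fresh $\kappa$-Cohen set from the remaining coordinates to split all its Boolean combinations. Your write-up is more explicit about the nice-name bookkeeping (and gives the sharper bound $|S|\le\theta\cdot\kappa$ rather than the paper's $|\calF|^\kappa$) and cites Proposition~\ref{ineq2} for the upper bound, but the argument is identical in substance.
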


\begin{proof}
First note that $(\kappa^{++})^{L(\p(\kappa))}$ can only increase as we pass to outer models.  Let $\xi$ be as hypothesized, and if necessary, first pass to an extension in which $\kappa$ is supercompact and $\GCH$ holds.  Now suppose $G \subseteq \bbP_\kappa$ is generic for the iteration defined in the proof of Theorem \ref{indest}.  Then $\GCH$ holds above $\kappa$ in $V[G]$.  Let $H \subseteq \Add(\kappa,\kappa^{+\xi})$ be generic over $V[G]$.  $V[G][H] \models 2^\kappa = \kappa^{+\xi}$, so Theorem \ref{indest} implies that in $V[G][H]$, $\kappa^{+\xi} \in \spec(\kappa,\kappa)$. To see that no smaller cardinals are in $\spec(\kappa,\kappa)$, note that any $\kappa$-free family $\calF \subseteq \p(\kappa)$ occurs in $V[G][H{\restriction}X]$, where $X \subseteq \kappa^{+\xi}$ has size at most $|\calF|^\kappa$.  Thus if $|\calF| < \kappa^{+\xi}$, then there is a $\kappa$-Cohen generic set $Y$ over $V[G][H{\restriction}X]$ in $V[G][H]$, and $Y$ splits every unbounded subset of $\kappa$ in $V[G][H{\restriction}X]$.  Thus $\calF$ is not maximal in $V[G][H]$.
\end{proof}

\section{The spectrum of strong independence}
\label{section.spectrum}

Lemma \ref{lem:ideal} shows that if we want a model where for some uncountable regular cardinal $\mu$, $\mathfrak{sp}_\fri(\mu,\kappa)\not=\emptyset$ for some $\kappa$, there must exist a countably complete precipitous ideal on $\kappa$ whose quotient algebra has a very particular form.  This suggests the relevance of Theorem \ref{foreman} to constructing models where such families exist.  Indeed, it allows us to give a rather straightforward proof of a result of Kunen (see~\cite{Kunen}).



\begin{thm}[Kunen]
It is consistent relative to a measurable cardinal that $\spec(\omega_1,\kappa) \not=\emptyset$ for some $\kappa$. 
\end{thm}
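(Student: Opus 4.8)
The plan is to use Foreman's Duality Theorem (Theorem~\ref{foreman}) to manufacture, from a measurable cardinal, an $\om_1$-complete precipitous ideal on some $\kappa$ whose quotient algebra is forcing-equivalent to a Cohen forcing $\Add(\om_1,\theta)$, and then to invoke the converse Lemma~\ref{converse}. I would start in a model of $\GCH$ (a harmless assumption, forceable while preserving measurability) with a measurable cardinal $\kappa$; fix a normal ultrafilter $\calU$ on $\kappa$, let $\calI$ be its dual ideal, and let $j\colon V\to M\cong\mathrm{Ult}(V,\calU)$ be the ultrapower embedding. The reason to start from the measure ideal is that $\p(\kappa)/\calI$ is the \emph{trivial} forcing, whose unique generic is $\calU$, so Foreman's setup becomes entirely concrete.

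Let $\bbQ=\Add(\om_1,\kappa)$. This poset is countably closed and has the $(2^{\aleph_0})^+$-c.c., hence the $\kappa$-c.c.\ since $\kappa$ is inaccessible; and $\calI$, as the dual of a $\kappa$-complete ultrafilter, is $\kappa$-complete and precipitous. Thus Theorem~\ref{foreman} applies, and because $\p(\kappa)/\calI$ is trivial it tells us that $\bbQ*\p(\kappa)/\bar\calI$ is forcing-equivalent to $j(\bbQ)$, with $\bbQ$ sitting inside as a regular suborder via $q\mapsto j(q)$. The next step is to identify $j(\bbQ)$: it is $\Add(\om_1,j(\kappa))$ as computed in $M$, but $M$ is closed under $\om$-sequences, so this is exactly $\Add(\om_1,j(\kappa))$ as computed in $V$; and since $\crt(j)=\kappa$ and every condition of $\bbQ$ has countable, hence bounded, domain below $\kappa$, the map $q\mapsto j(q)$ is simply the coordinate inclusion $\Add(\om_1,\kappa)\subseteq\Add(\om_1,j(\kappa))$. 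This presents $j(\bbQ)$ as a genuine product $\Add(\om_1,\kappa)\times\Add(\om_1,j(\kappa)\setminus\kappa)$.

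Consequently, if $G\subseteq\bbQ$ is generic, then in $V[G]$ the ideal $\bar\calI$ is precipitous and $\p(\kappa)/\bar\calI$ is forcing-equivalent to $\Add(\om_1,j(\kappa)\setminus\kappa)$ as computed in $V[G]$, that is, to $\Add(\om_1,\theta)$ with $\theta=|j(\kappa)\setminus\kappa|^{V[G]}=\kappa^+$ (using that $\bbQ$ preserves cardinals $\geq\kappa$ and that $|j(\kappa)|=2^\kappa=\kappa^+$ under $\GCH$). To finish through Lemma~\ref{converse} with $\mu=\om_1$, I would check its three hypotheses in $V[G]$: $\bar\calI$ is $\kappa$-complete, since $\calI$ is $\kappa$-complete and $\bbQ$ is $\kappa$-c.c., so it contains $[\kappa]^{<\kappa}$; $\p(\kappa)/\bar\calI$ admits a dense embedding from $\Add(\om_1,\theta)$ (equivalently, is forcing-equivalent to it); and $\calI$, viewed as a subset of $\bar\calI$ of size $2^\kappa=\kappa^+=\theta$, is $\subseteq$-cofinal in $\bar\calI$ because $\bar\calI$ is precisely the ideal it generates. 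Lemma~\ref{converse} then yields an $(\om_1,\kappa)$-dmif of size $\kappa^+$, so $\kappa^+\in\spec(\om_1,\kappa)$ in $V[G]$, as desired.

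The step I expect to be the main obstacle is the bookkeeping in identifying the duality quotient $\p(\kappa)/\bar\calI$ with an honest Cohen forcing: one must verify that $M$ computes $\Add(\om_1,j(\kappa))$ correctly (only $\om$-closure is needed, but it deserves a line), that the Foreman regular embedding really is the coordinate inclusion so that the remainder forcing is again of the form $\Add(\om_1,\cdot)$, and that the relevant index set has cardinality $\kappa^+$ back in $V[G]$. A secondary, purely bureaucratic point is reconciling the ``forcing-equivalent'' conclusion of Theorem~\ref{foreman} with the literal ``dense embedding $\Add(\mu,\theta)\to\p(\kappa)/\calI$'' in the statement of Lemma~\ref{converse}; this is routine for the separative posets at hand.
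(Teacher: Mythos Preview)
Your proposal is correct and follows essentially the same route as the paper: force with $\Add(\omega_1,\kappa)$ over a model with a measurable $\kappa$, apply Foreman's Duality Theorem to the dual ideal of the measure to see that $\p(\kappa)/\bar\calI$ is equivalent to $\Add(\omega_1,|j(\kappa)|)$, and then invoke Lemma~\ref{converse}. The paper's proof is simply a terser version of yours---it does not assume $\GCH$ (so obtains $\theta = 2^\kappa$ rather than $\kappa^+$) and leaves implicit the bookkeeping you spell out about $j(\bbQ)$, the coordinate inclusion, and the passage from forcing equivalence to a dense embedding.
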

\begin{proof}
Let $\calU$ be a $\kappa$-complete ultrafilter on $\kappa$ and let $\mu<\kappa$ be regular.  Let $H \subseteq \Add(\mu,\kappa)$ be generic over $V$.  By Foreman's Theorem, the ideal $\calJ$ generated by the dual of $\calU$ in $V[H]$ has the property that $\p(\kappa) / \calJ \cong \Add(\mu,2^\kappa)$, since in $V$, $|j(\kappa)| = 2^\kappa$.  By Lemma \ref{converse}, there is a $(\mu,\kappa)$-dmif $\calF \subseteq \p(\kappa)$ in $V[H]$.
\end{proof}

For uncountable cardinals $\mu$, we have thus far only seen examples where either $\spec(\mu,\kappa) = \emptyset$ or $2^\kappa \in \spec(\mu,\kappa)$.
An observation of Gitik and Shelah \cite{gs2} allows us to find a model where for some uncountable $\mu<\kappa$, $\fri_\sigma(\kappa)$ is defined and takes a value strictly less than $2^\kappa$, thus in particular addressing a question from~\cite{VFDM2}.

\begin{thm}
\label{lowi}
It is consistent relative to a measurable cardinal that for some $\kappa$, $\fri_\sigma(\kappa)$ is a singular cardinal below $2^\kappa$.  If there is a supercompact cardinal $\lambda$, then for any ordinals $\alpha,\beta$ with $\beta<\lambda$, there is a forcing extension in which for some $\kappa<\lambda$, $\fri_\sigma(\kappa) = \lambda^{+\beta}$ and $2^\kappa \geq \lambda^{+\alpha}$.
\end{thm}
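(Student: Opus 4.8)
The target is Theorem~\ref{lowi}. The first assertion (a singular $\fri_\sigma(\kappa)$ below $2^\kappa$ from a measurable) should follow by the same recipe used in the proof of Kunen's theorem above, but combined with the Gitik--Shelah observation: instead of forcing with $\Add(\mu,\kappa)$ directly over the measurable, one first prepares the ground model so that in the generic ultrapower the image $j(\kappa)$ has a prescribed, singular cardinality. Concretely, with $\mu = \omega_1$, I would start from a measurable $\kappa$ with normal ultrafilter $\calU$ and, before adding the Cohen subsets, blow up $2^\kappa$ to some singular cardinal $\nu$ of cofinality $\omega$ (e.g.\ by $\Add(\kappa^+, \nu)$ or a suitable Easton iteration keeping $\kappa$ measurable via a lifting argument) while arranging that $|j(\kappa)|$ computed in the extension equals some cardinal $\theta < \nu$ — this is the Gitik--Shelah trick. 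Then $H \subseteq \Add(\omega_1,\theta)$ is generic, Foreman's Duality Theorem~\ref{foreman} gives that the ideal $\calJ$ generated by the dual of $\calU$ satisfies $\p(\kappa)/\calJ \cong \Add(\omega_1, \theta)$, and Lemma~\ref{converse} (checking the cofinality hypothesis~(\ref{cofinalJ}) — the generated ideal has a cofinal subset of size $\leq |j(\kappa)|^V \cdot (\text{something}) \leq \theta$) yields a $(\omega_1,\kappa)$-dmif of size $\theta$. The lower-bound part, that $\theta$ is actually $\fri_\sigma(\kappa)$ and not merely a member of the spectrum, comes from a splitting/genericity argument as in the corollary after Theorem~\ref{indest}: any smaller $\omega_1$-free family lives in an intermediate model over which a fresh $\omega_1$-Cohen real splits everything, destroying maximality. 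I would then arrange $\theta$ itself to be the singular cardinal (take $\theta = \nu$ singular of cofinality $\omega$, with $2^\kappa > \nu$), giving $\kappa^+ < \fri_\sigma(\kappa) = \theta < 2^\kappa$.

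**The supercompact refinement.** For the second, quantitative statement, replace the measurable by a supercompact $\lambda$ and use a Laver-style preparation so that a well-chosen $\lambda$-supercompactness embedding $j : V \to M$ has the features we want after forcing. The point is to hit $\fri_\sigma(\kappa) = \lambda^{+\beta}$ for prescribed $\beta < \lambda$ while keeping $2^\kappa \geq \lambda^{+\alpha}$ for prescribed $\alpha$. I would pick $\kappa < \lambda$ to be the measurable we actually work with (supercompactness of $\lambda$ is used only to reflect the relevant large-cardinal configuration down, or alternatively $\kappa = \lambda$ itself works if one is willing to take $\kappa$ supercompact in the extension). Using the Duality Theorem, the size of the quotient $\p(\kappa)/\calJ \cong \Add(\omega_1,\theta)$ is controlled by $|j(\kappa)|$, so the job is to build a Cohen-preparation/Easton iteration and choose the target of the measure so that $|j(\kappa)|$, evaluated in the final model, is exactly $\lambda^{+\beta}$, while independently $2^\kappa$ is pushed up to at least $\lambda^{+\alpha}$ by further Cohen forcing at $\kappa^+$ (which does not affect the quotient algebra computation since it adds no subsets of $\kappa$, only increases $2^\kappa$). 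Then $\theta = \lambda^{+\beta}$ is forced into the spectrum, and the same splitting argument shows no smaller cardinal is, so $\fri_\sigma(\kappa) = \lambda^{+\beta}$.

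**Main obstacles.** The principal difficulty is the bookkeeping in the preparatory forcing: one must simultaneously (i) preserve enough measurability of $\kappa$ to run the Duality Theorem, (ii) control $|j(\kappa)|$ precisely to land the spectrum element at the desired cardinal, and (iii) independently manipulate $2^\kappa$, all while verifying that the generated ideal $\calJ$ in the extension still has a $\subseteq$-cofinal family of size $\leq \theta$ so that Lemma~\ref{converse}(\ref{cofinalJ}) applies — this last point needs the saturation/$\mu^+$-c.c.\ of the quotient from Lemma~\ref{lem:gch} together with a count of the antichains, and is the place where a naive choice of preparation can fail. A secondary subtlety is the minimality argument: to conclude $\fri_\sigma(\kappa) = \theta$ rather than just $\theta \in \spec(\omega_1,\kappa)$, I need that below $\theta$ the Cohen-subset-adding structure of the preparation makes every candidate family non-maximal; this requires the final Cohen forcing at $\kappa$ to be a product (or to factor as one) of length $\geq \theta$ so that a tail generic splits every unbounded set coded earlier — exactly the pattern in the corollary following Theorem~\ref{indest}, which I would mimic. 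I expect the singular case to require choosing the cofinal sequence into $\theta$ so that the iteration's support and the ideal's cofinal family interact correctly; handling the singularity of $\theta$ in the cofinality-$(\ref{cofinalJ})$ verification, rather than in the large-cardinal combinatorics, will be the trickiest bit.
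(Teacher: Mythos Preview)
Your proposal identifies the right toolkit (Foreman's Duality, Lemma~\ref{converse}, the Cohen-splitting argument for minimality), but it misses the central simplifying idea and, as written, contains real gaps.

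For the first claim, the paper does \emph{no} ground-model preparation. The ``Gitik--Shelah observation'' is simply that if $2^\kappa < \kappa^{+\omega}$ then $\kappa^{+\omega}$ is a fixed point of $j_\calU$: for large enough $n$ one has $j_\calU(\kappa^{+n}) = \kappa^{+n}$, hence $j_\calU(\kappa^{+\omega}) = \kappa^{+\omega}$. One therefore forces directly with $\bbQ = \Add(\mu,\kappa^{+\omega})$; since $j(\bbQ)\cong\bbQ$ via moving coordinates, Foreman's theorem yields $\p(\kappa)/\calJ \cong \Add(\mu,\kappa^{+\omega})$, and Lemma~\ref{converse} applies because the dual of $\calU$ is already a $\subseteq$-cofinal family in $\calJ$ of size $2^\kappa < \kappa^{+\omega}$. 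The splitting argument then gives $\fri_\mu(\kappa)=\kappa^{+\omega}$, and $\cf(2^\kappa)>\kappa$ forces $2^\kappa>\kappa^{+\omega}$. Your plan to first blow up $2^\kappa$ is counterproductive: it enlarges $|j(\kappa)|$ and destroys exactly the fixed-point configuration you need. There is also a confusion in your write-up: the quotient after forcing $\Add(\mu,\theta)$ has size $|j(\theta)|$, not $|j(\kappa)|$, and your parameters (first $\theta<\nu$, then $\theta=\nu$, with $2^\kappa$ blown up to $\nu$ yet also $2^\kappa>\nu$) are not internally consistent.

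For the second claim, the paper's argument is quite different from a Laver preparation. After arranging $2^\lambda \geq \lambda^{+\alpha}$ with $\lambda$ still supercompact, one applies \emph{Prikry forcing} at $\lambda$, making $\lambda$ a singular strong limit of cofinality $\omega$. This preserves measurables $\kappa$ with $\beta<\kappa<\lambda$, and for any such $\kappa$ the cardinal $\lambda$---and hence each $\lambda^{+\gamma}$ for $\gamma<\kappa$---is a fixed point of $j_\calU$ for the same reason as above. Forcing $\Add(\mu,\lambda^{+\beta})$ then gives $\fri_\mu(\kappa) = \lambda^{+\beta}$ by the fixed-point and splitting arguments, while $2^\kappa \geq \lambda^\omega = 2^\lambda \geq \lambda^{+\alpha}$ comes for free from the singular-cardinal arithmetic. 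Your sketch gives no mechanism for making $\lambda^{+\beta}$ a fixed point of the relevant embedding; the Prikry step is precisely what manufactures this.
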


\begin{proof}
For the first claim, assume $\kappa$ is a measurable cardinal such that $2^\kappa<\kappa^{+\omega}$.  Let $\calU$ be a $\kappa$-complete ultrafilter on $\kappa$ and let $\mu<\kappa$ be regular.  Since $2^\kappa<\kappa^{+\omega}$, $j_{\calU}(\kappa^{+n})=\kappa^{+n}$ for large enough $n<\omega$, and thus $j_{\calU}(\kappa^{+\omega}) = \kappa^{+\omega}$.  Let $\bbQ = \Add(\mu,\kappa^{+\omega})$.  The map $e$ given by Foreman's Theorem is essentially an embedding of $\bbQ$ into itself given by moving coordinates by $j_{\calU}$.  Thus if $H \subseteq \bbQ$ is generic, then if $\calJ$ is the ideal generated by the dual of $\calU$ in $V[H]$, we have $\p(\kappa)/\calJ \cong \Add(\mu,\kappa^{+\omega})$.  Since $|\calU| < \kappa^{+\omega}$, Lemma \ref{converse} implies that in $V[H]$, $\kappa^{+\omega} \in \mathfrak{sp}_\fri(\mu,\kappa)$. 

To show that $\kappa^{+\omega}$ is the minimum element of $\mathfrak{sp}_\fri(\mu,\kappa)$ in $V[H]$, suppose $\calF \subseteq \p(\kappa)$ is a $\mu$-independent family of size $<\kappa^{+\omega}$.  By the chain condition of $\bbQ$, there is some regular $\bbP \subseteq \bbQ$ of size $<\kappa^{+\omega}$ such that $\calF \in V[H \cap \bbP]$.  We can take $\bbP$ to be a sub-product of $\bbQ$, i.e.\ the collection of all partial binary functions on $A$ of size $<\mu$, where $A \subseteq \kappa^{+\omega}$ has size $<\kappa^{+\omega}$.  Let $\calG$ be the set of all ${<}\mu$-length Boolean combinations of members of $\calF$, and let $B \subseteq \kappa^{+\omega}$ be a set of $\kappa$-many ordinals not in $A$, $\lan\xi_\alpha : \alpha<\kappa \ra$.  In $V[H]$, let $Y = \{ \alpha<\kappa : H(\xi_\alpha) = 1 \}$.  For every $X \in \calG$, both $X$ and $\kappa\setminus X$ have size $\kappa$.  Thus by genericity, $X \cap Y$ and $X \setminus Y$ has size $\kappa$ for every $X \in \calG$.  Therefore, $\calF$ is not maximal and $\kappa^{+\omega} = \fri_\mu(\kappa)$.  Since $\cf(2^\kappa)>\kappa$, $2^\kappa > \fri_\mu(\kappa)$.

For the second claim, suppose $\lambda$ is supercompact $\beta<\lambda$, and $\alpha$ is any ordinal.  First pass to a forcing extension in which $\lambda$ is still supercompact and $2^\lambda \geq \lambda^{+\alpha}$.  Then use Priky forcing to make $\lambda$ a singular strong limit cardinal of cofinality $\omega$.  Let $\beta<\kappa<\lambda$ be such that $\kappa$ is measurable with witness $\calU$.  Again, $\lambda$ is a fixed point of $j_{\calU}$, and so are the first $\kappa$ cardinals above $\lambda$.  Now force with $\Add(\mu,\lambda^{+\beta})$, where $\mu<\kappa$ is regular.  By the same arguments as above, $\fri_\mu(\kappa) = \lambda^{+\beta}$ in this model.  We have $2^\kappa = (2^\kappa)^{\kappa} \geq \lambda^\omega \geq\lambda^{+\alpha}$.
\end{proof}

For the models constructed above where $\fri_\mu(\kappa)<2^\kappa$, we do not know whether the set $\spec(\mu,\kappa)$ contains more than one cardinal.  Thus it is of interest to ask if consistently $|\spec(\mu,\kappa)|\geq 2$. 
\begin{thm}
\label{twoi}
Let $\mu<\kappa_0<\kappa_1$ be regular cardinals such that $\kappa_0$ is strongly compact and $\kappa_1$ is measurable.  Then there is a generic extension in which $\spec(\mu,\kappa_1)$ contains at least two cardinals.
\end{thm}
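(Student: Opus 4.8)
The plan is to combine two measurable-type cardinals so that Foreman's Duality Theorem can be applied twice, yielding density ideals on $\kappa_1$ whose quotients are $\Add(\mu,\theta)$ for two different values of $\theta$. First I would fix a $\mu$-closed Cohen forcing $\bbQ = \Add(\mu,\Theta)$ for some large $\Theta$, and inside the extension $V[H]$ use the $\kappa_1$-complete ultrafilter $\calW$ on $\kappa_1$ together with the $\kappa_0$-complete (fine) ultrafilter coming from strong compactness of $\kappa_0$. The point of the strongly compact $\kappa_0$ below $\kappa_1$ is to arrange that in $V$ there are two distinct candidate values $\theta_0 < \theta_1$ for $|j(\kappa_1)|$-type cardinals that are \emph{both} fixed points of the relevant embeddings: we want one ultrapower where $j(\kappa_1)$ has cardinality $\theta_0$ and another where it has cardinality $\theta_1$. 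Concretely, I would engineer the ground model by a preparatory Easton iteration (as in the proof of Theorem~\ref{indest}) so that GCH holds on a tail above $\kappa_1$ but $2^{\kappa_1}$ can be pushed up to $\theta_1$ by the Cohen forcing, while the strongly compact embedding witnesses that $\p(\kappa_1)$ has size $\theta_0$ in an intermediate ultrapower.

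Here are the steps in order. (1) Prepare $V$ so that $\kappa_0$ is strongly compact, $\kappa_1$ is measurable, $\GCH$ holds above $\kappa_1$, and Laver-style indestructibility of $\kappa_0$ holds for $\kappa_0$-directed-closed forcing. (2) Force with $\bbQ = \Add(\mu,\theta_1)$ over $V$ to obtain $V[H]$; since $\mu<\kappa_0$, $\kappa_0$ and $\kappa_1$ remain measurable and $\kappa_0$ strongly compact. (3) Apply Foreman's Theorem (Theorem~\ref{foreman}) with the $\kappa_1$-complete ideal dual to $\calW$ and the $\kappa_1$-c.c.\ (in fact $\mu^+$-c.c.) forcing $\bbQ$: the induced map $e$ is, up to isomorphism, the shift of $\Add(\mu,\theta_1)$ along $j_\calW$, and because $\theta_1$ is a fixed point (resp.\ $|j_\calW(\kappa_1)| = \theta_1$), the generated ideal $\calJ_1$ in $V[H]$ satisfies $\p(\kappa_1)/\calJ_1 \cong \Add(\mu,\theta_1)$. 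It is $\mu$-complete, contains the bounded ideal, and has a cofinal family of size $\leq \theta_1$, so Lemma~\ref{converse} gives a $(\mu,\kappa_1)$-dmif of size $\theta_1$. (4) Repeat step (3) but with an ideal on $\kappa_1$ built from the strongly compact ultrafilter on $\p_{\kappa_0}(\lambda)$ for suitable $\lambda$, or rather from its projection to $\kappa_1$, arranged so that the target value is $\theta_0 < \theta_1$: this requires that $\theta_0$ is a fixed point of the corresponding embedding and that $\theta_0 \geq \kappa_1$, $\theta_0^{<\mu} = \theta_0$, so that the hypotheses of Lemma~\ref{converse} hold and produce a $(\mu,\kappa_1)$-dmif of size $\theta_0$. (5) Conclude $\{\theta_0,\theta_1\} \subseteq \spec(\mu,\kappa_1)$, hence $|\spec(\mu,\kappa_1)| \geq 2$.

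The main obstacle is step (4): getting a \emph{second}, genuinely smaller value into the spectrum of the \emph{same} pair $(\mu,\kappa_1)$. The difficulty is that the quotient $\p(\kappa_1)/\calI$ produced by Foreman's theorem from the dual of $\calW$ has a size controlled by $|j_\calW(\kappa_1)|$, which after Cohen forcing of length $\theta_1$ is exactly $\theta_1$; to obtain $\theta_0$ instead I need a \emph{different} precipitous ideal on $\kappa_1$ whose original quotient in $V$ is small (e.g.\ $\kappa_1^+$ or $\theta_0$) and which remains well-behaved after the Cohen forcing. This is precisely where strong compactness of $\kappa_0$ enters: one can use a $\kappa_0$-complete ultrafilter to transfer a small quotient structure down, or alternatively first collapse/arrange $2^{\kappa_1}$ at an intermediate stage. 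The bookkeeping to ensure \emph{both} $\theta_0$ and $\theta_1$ are simultaneously fixed points of their respective embeddings, lie in the correct interval $[\kappa_1, 2^{\kappa_1}]$, and satisfy $\theta_i^{<\mu}=\theta_i$, is the delicate part; everything else is a routine application of Lemma~\ref{converse} and Theorem~\ref{foreman}.
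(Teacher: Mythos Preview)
Your overall framework is right---use two ultrafilters, apply Foreman's Duality (Theorem~\ref{foreman}) to each, then invoke Lemma~\ref{converse}---but step~(4) is not merely ``delicate bookkeeping''; as written it cannot work, and the roles of the two embeddings are reversed. After forcing with $\Add(\mu,\Theta)$ and applying Theorem~\ref{foreman} to an ultrafilter with embedding $j$, the resulting quotient is isomorphic to $\Add(\mu,|j(\Theta)|)$. Since always $j(\Theta)\geq\Theta$, you can never produce a quotient of size $\theta_0<\theta_1$ by forcing with $\Add(\mu,\theta_1)$. The strongly compact embedding $j_0$ has critical point $\kappa_0<\kappa_1$ and hence moves \emph{more} ordinals than the measurable embedding $j_1$; it yields the \emph{larger} element of the spectrum, not the smaller one.

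The missing idea is the choice of a specific cardinal that separates the two embeddings. The paper picks a strong limit $\lambda>\kappa_1$ with $\cf(\lambda)=\kappa_0$, and a $\kappa_0$-sequence of common fixed points of $j_0$ and $j_1$ cofinal in $\lambda$. Then $\lambda$ is fixed by $j_1$ (continuity at cofinality ${<}\kappa_1$) but \emph{not} by $j_0$ (since $\cf(\lambda)=\crt(j_0)$); in fact one checks $|j_0(\lambda')|=2^\lambda$ for every $\lambda\leq\lambda'\leq 2^\lambda$. Now force with $\Add(\mu,\lambda^{+\alpha})$ for some $\alpha<\kappa_1$ with $\lambda^{+\alpha}<2^\lambda$. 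Foreman's theorem applied to $\calU_1$ gives a quotient $\cong\Add(\mu,\lambda^{+\alpha})$, while applied to the fine $\kappa_0$-complete $\calU_0$ on $\p_{\kappa_0}(\kappa_1)$ it gives $\cong\Add(\mu,2^\lambda)$. Since $\kappa_1$ is inaccessible, $|\p_{\kappa_0}(\kappa_1)^V|=\kappa_1$ and all $\calI_0$-positive sets have size $\kappa_1$; and since $(2^{\kappa_1})^V<\lambda$, the cofinality hypothesis of Lemma~\ref{converse} is met for both ideals. Thus $\{\lambda^{+\alpha},2^\lambda\}\subseteq\spec(\mu,\kappa_1)$.

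Two further remarks: no Laver-style preparation is needed---one works directly with the ground-model ultrafilters and a single Cohen extension---and the strongly compact ultrafilter is used on $\p_{\kappa_0}(\kappa_1)$ itself (a set of size $\kappa_1$), not projected to $\kappa_1$.
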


\begin{proof}
Let $\calU_0$ be a fine $\kappa_0$-complete ultrafilter on $\p_{\kappa_0}(\kappa_1)$ and let $\calU_1$ be a $\kappa_1$-complete ultrafilter on $\kappa_1$.  Let $j_i : V \to M_i$ be the ultrapower embedding associated to $\calU_i$.  Let $\lambda > \kappa_1$ be a strong limit cardinal of cofinality $\kappa_0$.  There is an increasing sequence of $\lan \lambda_i : i < \kappa_0 \ra$ cofinal in $\lambda$, such that each $\lambda_i$ is a fixed point of both $j_0$ and $j_1$.  Clearly, $\lambda$ is a fixed point of $j_1$ but not of $j_0$.
\begin{claim}
If $\lambda \leq \lambda' \leq 2^\lambda$, then $| j_0(\lambda') | = 2^\lambda$.
\end{claim}
\begin{proof}
Let $\theta=2^\lambda$.  Let $\lan f_\alpha : \alpha < \theta \ra$ be a sequence of distinct members of $\prod_{i<\kappa_0} \lambda_i$.  Since each $\lambda_i$ is fixed by $j_0$, for each $\alpha<\theta$, $j_0(f_\alpha) \restriction \kappa_0 \in \prod_i \lambda_i$.  If $\alpha<\beta$, then there is $i$ such that $f_\alpha(i) \not= f_\beta(i)$, so $j_0(f_\alpha)(i) \not= j_0(f_\beta)(i)$.  Thus $M_0 \models 2^\lambda \geq \theta$.   Since $M_0 \models j_0(\lambda)$ is a strong limit, $\theta < j_0(\lambda)$.  On the other hand, if $\lambda' \leq 2^\lambda$, then $|j_0(\lambda')| \leq (2^\lambda)^{\kappa_1}= 2^\lambda$.
\end{proof}

Let $\alpha<\kappa_1$ be such that $\lambda^{+\alpha} < 2^\lambda$ and let $H \subseteq \Add(\mu,\lambda^{+\alpha})$ be generic.  By the same argument as for Theorem \ref{lowi}, if $\calI_i \in V[H]$ is the ideal generated by the dual to $\calU_i$, then $\p(\p_{\kappa_0}(\kappa_1)^V) / \calI_0 \cong \Add(\mu,2^\lambda)$ and $\p(\kappa_1) / \calI_1 \cong \Add(\mu,\lambda^{+\alpha})$.  Since $\kappa_1$ is inaccessible in $V$, all $\calI_0$-positive sets have cardinality $\kappa_1$, and  since $|\calU_0| = |\calU_1| = (2^{\kappa_1})^V < \lambda$, by Lemma \ref{converse}, $\{\lambda^{+\alpha},2^\lambda\} \subseteq \spec(\mu,\kappa_1)$.
\end{proof}

%

\begin{thm}\label{at.least.two}
Assume $\GCH$, $\kappa_0$ is supercompact and $\kappa_1>\kappa_0$ is measurable.  Then there is a generic extension in which $\spec(\mu,\kappa_1) \not=\emptyset$ for at least two uncountable $\mu$.
\end{thm}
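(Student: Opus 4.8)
The plan is to realize, in a single generic extension, $\spec(\mu_0,\kappa_1)\neq\emptyset$ and $\spec(\mu_1,\kappa_1)\neq\emptyset$ for two uncountable regular cardinals $\mu_0<\mu_1$; by Corollary~\ref{separation} the associated density completeness numbers are forced to satisfy $\mu_0<\lambda_{\mu_0}\leq\mu_1<\lambda_{\mu_1}\leq\kappa_1$, so I would aim for $\lambda_{\mu_1}=\kappa_1$ (witnessed by a normal measure $\calU_1$ on $\kappa_1$), $\mu_1\in[\lambda_{\mu_0},\kappa_1)$ -- for concreteness $\mu_1=\kappa_0$ -- and $\lambda_{\mu_0}\leq\kappa_0$ witnessed by a uniform ultrafilter $\calW$ on (a set of size) $\kappa_1$ arising from the supercompactness of $\kappa_0$: e.g. the pushforward under $s\mapsto\sup s$ of a fine normal measure on $\p_{\kappa_0}(\kappa_1)$, which is a uniform ultrafilter on $\kappa_1$ of completeness exactly $\kappa_0$ (alternatively, the sum of a measure on a measurable below $\kappa_0$ with $\calU_1$). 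The two dmif's will then arise as $\calI_\calF$ for the families produced by Lemma~\ref{converse} from the ideals generated in the extension by the duals of $\calW$ and of $\calU_1$.

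The constraint dictating the overall shape of the argument is that one cannot build the two families one after another: by Theorem~\ref{fragile} any ${<}\mu_1$-c.c.\ or small forcing kills $\spec(\mu_1,\kappa_1)$, while conversely forcing with $\Add(\mu_1,\theta)$ -- or any poset adding a subset $Y$ of $\kappa_1$ with $\calF\cup\{Y\}$ still $\mu_0$-free -- destroys the maximality of any putative $(\mu_0,\kappa_1)$-mif $\calF$. Hence both families must be extracted from a single generic via Foreman's Duality Theorem~\ref{foreman} applied to $\calW$ and to $\calU_1$ over the \emph{same} forcing $\bbP$. Since a single Cohen poset $\Add(\mu,\theta)$ is sent to $\Add(\mu,j(\theta))$ by every embedding with critical point above $\mu$, and so yields dmif's only at $\mu$, the forcing $\bbP$ cannot be a single Cohen poset; instead I would take it to be a Laver-function-guided Easton-support iteration of Cohen forcings exactly in the style of the proof of Theorem~\ref{indest}, arranged so that (i) below $\lambda_{\mu_0}$ it is ${<}\lambda_{\mu_0}$-c.c.\ and, as seen by $j_\calW$, the Laver function installs a copy of $\Add(\mu_0,\cdot)$ at stage $\lambda_{\mu_0}$, and (ii) $j_{\calU_1}(f)(\kappa_1)$ is a name for $\Add(\mu_1,\cdot)$, so that $j_{\calU_1}(\bbP)$ has an $\Add(\mu_1,\cdot)$-stage at $\kappa_1$. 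Because $\lambda_{\mu_0}\neq\kappa_1$, these two prescriptions live at different stages and do not interfere.

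With $\bbP$ so designed, the argument runs: assume GCH, Laver-prepare $\kappa_0$ so that its supercompactness is indestructible under $\kappa_0$-directed-closed forcing while $\kappa_1$ remains measurable; fix $\calU_1$ and $\calW$; define and force with $\bbP$; then lift $j_\calW$ and $j_{\calU_1}$ to the extension, building the quotient generics using GCH. Applying Theorem~\ref{foreman} to $\calW$ over $\bbP$ gives, in the extension, a $\lambda_{\mu_0}$-complete ideal on $\kappa_1$ whose quotient is equivalent to $\Add(\mu_0,\theta_0)$, and applying it to $\calU_1$ over $\bbP$ (as in Theorems~\ref{lowi} and~\ref{twoi}) gives a $\kappa_1$-complete ideal on $\kappa_1$ whose quotient is equivalent to $\Add(\mu_1,\theta_1)$, for suitable $\theta_0,\theta_1$; here one picks the target index sets (via the $\kappa_1^{+\omega}$-style trick) to be fixed by the relevant embeddings and uses a recovery argument as in the proof of Theorem~\ref{indest} to absorb the extra higher-closure tails of $j(\bbP)$, so that each quotient really is an honest Cohen forcing rather than a product. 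Since $|\calW|,|\calU_1|$ are below $\theta_0,\theta_1$ and the ideals have small $\subseteq$-cofinal subsets, Lemma~\ref{converse} then yields a $(\mu_0,\kappa_1)$-dmif and a $(\mu_1,\kappa_1)$-dmif, so $\spec(\mu_0,\kappa_1)$ and $\spec(\mu_1,\kappa_1)$ are both nonempty.

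The main obstacle, and the technical heart, is exactly this coexistence: the two families cannot be produced in stages, so the one forcing $\bbP$ must simultaneously ``look like'' $\Add(\mu_0,\cdot)$ to the crit-$\kappa_0$ embedding and ``look like'' $\Add(\mu_1,\cdot)$ to the crit-$\kappa_1$ embedding. Choosing the Laver function and the index sets $\theta_0,\theta_1$ so that both Foreman quotients emerge as Cohen forcings at the intended distinct cardinals -- while keeping the ${<}\lambda_{\mu_0}$-c.c.\ required for the first application, preserving the measurability of $\kappa_1$ for the second, respecting the size constraints of Proposition~\ref{ineq2}, and carrying out the tail-absorption of the proof of Theorem~\ref{indest} in both applications -- is where essentially all the work lies.
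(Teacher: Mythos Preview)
Your high-level strategy is right---two embeddings with different critical points, one forcing that yields Cohen quotients at two different $\mu$'s, coexistence rather than sequencing---but the implementation has a concrete gap. You write that ``$j_{\calU_1}(f)(\kappa_1)$ is a name for $\Add(\mu_1,\cdot)$, so that $j_{\calU_1}(\bbP)$ has an $\Add(\mu_1,\cdot)$-stage at $\kappa_1$.'' But $f$ is a Laver function for the supercompact $\kappa_0$, so $\dom(f)=\kappa_0$; since $\crt(j_{\calU_1})=\kappa_1>\kappa_0$, we have $j_{\calU_1}(f)=f$ and (if $\bbP$ has length $\kappa_0$) $j_{\calU_1}(\bbP)=\bbP$, so there is no stage $\kappa_1$ and $f(\kappa_1)$ is simply undefined. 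There is no Laver function for the merely measurable $\kappa_1$ to appeal to, and extending the iteration past $\kappa_0$ gives you no mechanism for prescribing what happens at stage $\kappa_1$ of $j_{\calU_1}(\bbP)$. Your choice $\mu_0<\kappa_0$ creates a second difficulty: to get the supercompactness quotient to be $\Add(\mu_0,\cdot)$, the stage-$\kappa_0$ factor would have to be a non-$\kappa_0$-closed Cohen poset, which undercuts the closure you need to build the tail generic in the recovery argument.

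The paper sidesteps both issues by taking $\mu_0=\kappa_0$ and $\mu_1=\kappa_0^+$ and building into each nontrivial stage $\alpha$ of the Laver iteration the \emph{product} $\Add(\alpha,f(\alpha)^+)\times\Add(\alpha^+,f(\alpha))$. Choosing $\calU$ on $\p_{\kappa_0}(\kappa_1)$ with $j_\calU(f)(\kappa_0)=\kappa_1$, stage $\kappa_0$ of $j_\calU(\bbP_{\kappa_0})$ is $\Add(\kappa_0,\kappa_1^+)\times\Add(\kappa_0^+,\kappa_1)$. The final model is $V[G][H_1]$ with $H_1\subseteq\Add(\kappa_0^+,\kappa_1)$ a single Cohen forcing; this is simultaneously (a) the second factor at stage $\kappa_0$, so the lifting quotient for $j_\calU$ is the \emph{first} factor $\Add(\kappa_0,\kappa_1^+)$, and a recovery argument as in Theorem~\ref{indest} yields $\kappa_1^+\in\spec(\kappa_0,\kappa_1)$; and (b) a $\kappa_1$-c.c.\ Cohen poset over $V[G]$, where $\kappa_1$ is still measurable because $|\bbP_{\kappa_0}|=\kappa_0$, so Foreman's theorem applied directly to a $\kappa_1$-measure $\calW$ gives quotient $\Add(\kappa_0^+,j_\calW(\kappa_1))$ and $\kappa_1^+\in\spec(\kappa_0^+,\kappa_1)$. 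The two-factor product is precisely what lets a single $\Add(\kappa_0^+,\cdot)$-forcing produce a $\kappa_0$-Cohen quotient for the supercompactness embedding---contrary to your remark that a bare Cohen poset yields a dmif only at its own $\mu$---and no anticipation at $\kappa_1$ is needed at all.
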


\begin{proof}
Let $f : \kappa_0 \to V_{\kappa_0}$ be a Laver function, so that for all $\lambda$ and all $X \subseteq V_\lambda$, there is a normal $\kappa_0$-complete ultrafilter $\calU$ on $\p_{\kappa_0}(\lambda)$, such that $j_{\calU}(f)(\kappa_0) = X$.  Let $\bbP_{\kappa_0}$ be an Easton-support iteration contained in $V_{\kappa_0}$, where at stage $\alpha$, we force with the trivial poset, unless $\alpha$ is inaccessible, $f\restriction \alpha \subseteq V_\alpha$, and $f(\alpha)$ is an ordinal, in which case we force with $\Add(\alpha,f(\alpha)^+) \times \Add(\alpha^+,f(\alpha))$.  Let $\calU$ be a normal $\kappa_0$-complete ultrafilter on $\p_{\kappa_0}(\kappa_1)$ such that $j(f)(\kappa_0) = \kappa_1$, where $j : V \to M$ is the ultrapower embedding. 

Let $G \subseteq \bbP_{\kappa_0}$ be generic.  Then let $H_1 \subseteq \Add(\kappa_0^+,\kappa_1)$ be generic over $V[G]$.  We can lift $j$ to an embedding with domain $V[G][H_1]$ via the following procedure.  First force to get a generic $H_0 \subseteq \Add(\kappa_0,\kappa_1^+)$.  By the closure of $M$ and the chain condition of the forcing, $M[G][H_0][H_1]$ is a $\kappa_1$-closed subclass of $V[G][H_0][H_1]$.  The remaining tail-end of $j(\bbP_{\kappa_0})$ is $\kappa_1^+$-closed in $M[G][H_0][H_1]$.  By the $\GCH$, $|j(\kappa_1) | = \kappa_1^+$, and $\kappa_1^{++}$ is a fixed point of $j$.  Thus we can build a filter $K$ that is generic over $M[G][H_0][H_1]$ for the remaining part of $j(\bbP_{\kappa_0})$ and lift to $j : V[G] \to M[G']$, where $G'= G *H_0*H_1*K$.  To lift further to include $H_1$ in the domain, note that by normality, $j \restriction H_1 \in M[G][H_1]$.  Since $|H_1| = \kappa_1$, there is a lower bound to $j[H_1]$ in $\Add(j(\kappa_0^+),j(\kappa_1))^{M[G']}$.  Since $|j(\kappa_1)| =\kappa_1^+$, we can again build a filter $H_1'$ for this forcing that is generic over $M[G']$, with $j[H_1] \subseteq H_1'$.  

Thus we can lift the map to $V[G][H_1]$ by adding the object $H_0$.  Let $\calI$ be the normal ideal on $Z=\p_{\kappa_0}(\kappa_1)^V$ defined as $\{ X : 1 \Vdash_{\Add(\kappa_0,\kappa_1^+)} j[\kappa_1] \notin j(X) \}$.  As in the argument for Theorem \ref{indest}, the map $[X]_{\calI} \mapsto || j[\kappa_1] \in j(X) ||$ is a regular embedding of $\p(Z)/\calI$ into the Boolean completion of $\Add(\kappa_0,\kappa_1^+)$.  If $\calU'$ is a generic ultrafilter projected from $H_0$ and $i : V[G][H_1] \to N$ is the generic ultrapower embedding, then there is a factor map $k : N \to M[G'][H_1']$.  Since $\calI$ is normal, for every ordinal $\alpha\leq\kappa_1$, $\alpha = \ot(i(\alpha) \cap [\id]_{\calU'})= \ot(j(\alpha) \cap j[\kappa_1]) = k(\alpha)$, so $\crt(k) > \kappa_1^+$.  Thus, if $h_1$ is the first factor of $i(G)_\kappa$, then $k(h_0) = H_0$ and $h_0 = H_0$, so $H_0$ can be recovered from $\calU'$.  Therefore, $\p(Z)/\calI$ is forcing-equivalent to $\Add(\kappa_0,\kappa_1^+)$.  Since $|\calI| = \kappa_1^+$, Lemma \ref{converse} implies that $\kappa_1^+ \in \spec(\kappa_0,\kappa_1)$.

Now, $\kappa_1$ is still measurable in $V[G]$.  Let $\calW$ be a $\kappa_1$-complete ultrafilter on $\kappa_1$ in $V[G]$.  If $\calJ$ is the ideal generated by the dual of $\calW$ in $V[G][H_1]$, then using Theorem \ref{foreman}, $\p(\kappa_1)/\calJ$ is forcing-equivalent to $\Add(\kappa_0^+,j_{\calW}(\kappa_1))$.  By $\GCH$ and Lemma \ref{converse}, $\kappa_1^+ \in \spec(\kappa_0^+,\kappa_1)$.
\end{proof}

\section{Sacks indestructibility}\label{section.Sacks}

In~\cite{VFDM2}, the authors show that it is consistent relative to a measurable that $\mathfrak{i}(\kappa)=\mathfrak{i}_\omega(\kappa)<2^\kappa$ by constructing a special $(\omega,\kappa)$-dmif which is indestructible by large products of $\kappa$-Sacks forcing. Thus our Theorem~\ref{strong_indestructible} is a generalization of this result to $(\kappa,\kappa)$-dmif's for $\kappa>\omega$.

Below, $\mathbb{S}_\kappa$ denotes $\kappa$-Sacks forcing and $\bbS_\kappa^\lambda$ the $\kappa$-supported product of $\lambda$-many copies of $\kappa$-Sacks forcing.  The $\kappa$-Sacks forcing was introduced by Kanamori \cite{Kanamori}, where some of its key properties are established.  The conditions are perfect trees $T \subseteq \! ^{<\kappa} 2$ satisfying certain conditions, ordered by $T \leq S$ when $T \subseteq S$.   Among others, Kanamori proved \cite[Lemma 1.2]{Kanamori} that if $\beta<\kappa$ and $\lan p_\alpha : \alpha < \beta \ra$ is a decreasing sequence of conditions, then $\bigcap_{\alpha<\beta} p_\alpha$ is a condition.  Thus $\bbS_\kappa$ is $\kappa$-closed with greatest lower bounds.  This makes Theorem \ref{indest} applicable, because of the following folk result mentioned by Velleman \cite{Velleman}:

\begin{prop}
If $\kappa$ is a regular uncountable cardinal and
 $\bbP$ is a $\kappa$-closed poset with greatest lower bounds, then $\bbP$ is $\kappa$-directed-closed.
\end{prop}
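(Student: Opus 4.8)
The plan is to prove, by induction on the cardinal $\lambda < \kappa$, the statement $(\ast_\lambda)$: every directed $D \subseteq \bbP$ with $|D| \leq \lambda$ has a lower bound, hence (since $\bbP$ has greatest lower bounds) a greatest lower bound $\bigwedge D$. Once $(\ast_\lambda)$ is known for all $\lambda < \kappa$, it is immediate that $\bbP$ is $\kappa$-directed-closed. For $\lambda$ finite, $(\ast_\lambda)$ is just directedness: given $d_0, \dots, d_{n-1} \in D$, iteratively pick $e_i \in D$ with $e_0 = d_0$ and $e_{i+1} \leq e_i, d_{i+1}$, so $e_{n-1}$ is a lower bound of $D$. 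For $\lambda = \omega$, enumerate $D = \{d_n : n < \omega\}$ and, using directedness at each step, build a decreasing sequence $\lan p_n : n < \omega\ra$ in $D$ with $p_0 = d_0$ and $p_{n+1} \leq p_n, d_{n+1}$; since $\kappa$ is uncountable, $\kappa$-closure produces a lower bound $p$ of this sequence, and then $p \leq p_n \leq d_n$ for all $n$, so $p$ is a lower bound of $D$.

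For the inductive step, suppose $\lambda < \kappa$ is uncountable and $(\ast_{\lambda'})$ holds for all $\lambda' < \lambda$; let $D$ be directed with $|D| = \lambda$, enumerated as $\{d_\alpha : \alpha < \lambda\}$. Using the Axiom of Choice, fix a function $\ell : [D]^{<\omega} \to D$ with $\ell(F)$ a lower bound of $F$, and let $\mathrm{cl}(X) = \bigcup_{n<\omega} X_n$ where $X_0 = X$ and $X_{n+1} = X_n \cup \{\ell(F) : F \in [X_n]^{<\omega}\}$. Then $\mathrm{cl}(X) \subseteq D$ is directed, contains $X$, has cardinality $\leq \max(|X|, \aleph_0)$, and $X \mapsto \mathrm{cl}(X)$ is monotone. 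For each $\alpha < \lambda$ set $E_\alpha = \mathrm{cl}(\{d_\beta : \beta \leq \alpha\})$; this is a directed subset of $D$ of cardinality $< \lambda$ (here one uses $\lambda$ uncountable, so $\max(|\alpha|,\aleph_0) < \lambda$), so by the induction hypothesis it has a greatest lower bound $p_\alpha := \bigwedge E_\alpha$. Monotonicity of $\mathrm{cl}$ gives $E_\alpha \subseteq E_{\alpha'}$ for $\alpha \leq \alpha'$, hence $p_\alpha \geq p_{\alpha'}$, so $\lan p_\alpha : \alpha < \lambda\ra$ is a decreasing sequence of length $\lambda < \kappa$. By $\kappa$-closure it has a lower bound $p$, and since $d_\alpha \in E_\alpha$ we get $p \leq p_\alpha \leq d_\alpha$ for every $\alpha < \lambda$; thus $p$ is a lower bound of $D$, establishing $(\ast_\lambda)$.

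The one genuine point — and the place where the greatest-lower-bound hypothesis is essential — is that a directed set of size $\lambda$ need \emph{not} have the property that all of its subsets of size $<\lambda$ possess lower bounds, so one cannot simply run a transfinite recursion directly along an enumeration of $D$ (the partial lower bounds built at earlier stages live in $\bbP\setminus D$ and may be incompatible with the next element of the enumeration). Passing instead to the directed closures $E_\alpha$ of small initial segments, which \emph{are} directed and small, lets the induction hypothesis apply; and taking the \emph{greatest} lower bound of each $E_\alpha$ is exactly what forces the sequence $\lan p_\alpha\ra$ to be decreasing, so that plain $\kappa$-closure finishes the job. Everything else is routine.
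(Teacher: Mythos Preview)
Your proof is correct and takes essentially the same approach as the paper: induct on the size of the directed set, express it as an increasing union of smaller directed subsets (your closure construction makes explicit what the paper asserts in one line), take their greatest lower bounds to get a decreasing sequence, and apply $\kappa$-closure. One minor wording issue: the parenthetical ``hence (since $\bbP$ has greatest lower bounds)'' is not quite a valid inference as stated, since the hypothesis only guarantees greatest lower bounds for \emph{decreasing sequences}, not arbitrary bounded sets---but your construction actually produces the greatest lower bound at each stage, so simply phrase $(\ast_\lambda)$ as ``has a greatest lower bound'' from the start (as the paper does) and the induction is clean.
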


\begin{proof}
If $D \subseteq \bbP$ is a countable directed set, then we can easily find a sequence $p_0 \geq p_1 \geq p_2 \geq \dots$ contained in $D$ such that for all $q \in D$, there is $n$ such that $p_n \leq q$.  Thus $D$ has a greatest lower bound.  Let $\mu<\kappa$ be a cardinal, and suppose by induction that for every directed $D \subseteq \bbP$ of cardinality $<\mu$, $D$ has a greatest lower bound.  Let $D$ be a directed set of size $\mu$, and let $\lan D_\alpha : \alpha <\mu\ra$ be a $\subseteq$-increasing sequence such that each $D_\alpha$ is a directed set of size $<\mu$, and $\bigcup_{\alpha<\mu} D_\alpha = D$.  If $p_\alpha = \inf D_\alpha$, then $\lan p_\alpha : \alpha < \mu \ra$ is a decreasing sequence, so by hypothesis it has a greatest lower bound.
\end{proof}

For an ideal $\calI$, let $\calI^*$ denote its dual filter.

\begin{lem}\label{lem.Sacks.ind} Let $\calF$ be a $(\kappa,\kappa)$-dmif such that its density ideal $\calI_\calF$ is normal. Thus, in particular:
	\begin{enumerate}
		\item Every subfamily of $\calI_\calF^*$ of cardinality $\leq\kappa$ has a pseudo-intersection in $\calI_\calF^*$.
		\item For every strictly increasing function $f\in{^\kappa\kappa}$ there is $C\in\calI^*_\calF$ such that if $C=\{k(\alpha)\}_{\alpha\in\kappa}$ is the increasing enumeration of $C$, then $f(k(\alpha))<k(\alpha+1)$.
	\end{enumerate}
Then $\calF$ remains maximal in generic extensions obtained via arbitrarily large products $\bbS^\lambda_\kappa$ of $\kappa$-Sacks forcing $\bbS_\kappa$.
\end{lem}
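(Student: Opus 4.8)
The plan is to imitate the argument for the countable analogue in~\cite{VFDM2}, using the extra structure granted by normality of $\calI_\calF$.  Suppose $\calF$ is as in the hypothesis, and let $\bbS_\kappa^\lambda$ be the $\kappa$-supported product of $\lambda$-many copies of $\kappa$-Sacks forcing.  Fix a condition $p$ and an $\bbS_\kappa^\lambda$-name $\dot Y$ for a subset of $\kappa$; I must produce $q \leq p$ and a Boolean combination $X_r$ (some $r \in \Add(\kappa,|\calF|)$) together with a decision of whether $X_r \cap \dot Y = \emptyset$ or $X_r \subseteq \dot Y$.  By dense maximality of $\calF$ it suffices to find, below $p$, a condition forcing this for \emph{some} such $r$; equivalently, it suffices to find $q\le p$ and $C\in\calI_\calF^*$ such that $q$ forces $\dot Y\cap C=\emptyset$ or $C\subseteq\dot Y$, since $\calI_\calF^* = \{ X_r : r \in \Add(\kappa,|\calF|)\}$-generated and any $\calI_\calF$-positive set contains some $X_r$.

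The main step is a fusion argument for $\bbS_\kappa^\lambda$.  Working below $p$, I build a fusion sequence $\lan p_\alpha : \alpha < \kappa \ra$ together with an increasing continuous sequence of ordinals $\lan \gamma_\alpha : \alpha < \kappa \ra$ so that at stage $\alpha$, the restriction of $p_{\alpha+1}$ to its $\alpha$-th splitting level (across the finitely-or-$<\!\kappa$-many active coordinates determined by an appropriate bookkeeping) decides the statement ``$\xi \in \dot Y$'' for every $\xi < \gamma_\alpha$, and such that the ``speed'' of the sequence $\lan\gamma_\alpha\ra$ is dominated in the sense of hypothesis~(2): I choose $f$ to be the function $\alpha \mapsto \gamma_\alpha$ (made strictly increasing) and obtain $C = \{k(\alpha)\}_{\alpha<\kappa}\in\calI_\calF^*$ with $f(k(\alpha)) < k(\alpha+1)$.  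Let $q$ be the fusion, which is a condition by $\kappa$-closure with greatest lower bounds together with the standard $\kappa$-Sacks fusion lemma of Kanamori.  Then for each $\alpha$, every $\kappa$-branch through $q$ agrees with some node on level $\alpha$ of the fusion that already decides membership in $\dot Y$ for all ordinals $<\gamma_{k(\alpha)}$, hence in particular for all ordinals in $C \cap [k(\alpha),k(\alpha+1))$.  So $q$ forces ``$\dot Y \cap C$'' to be the same set as a certain set coded by the fusion data in $V$; call this the \emph{trace} of $\dot Y$ on $C$.

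Now the key point: because $q$ reduces the value of $\dot Y \cap C$ to one of at most $2^{<\kappa}\cdot\lambda^{<\kappa} = \lambda^{<\kappa}$-many possibilities \emph{but more importantly} the trace lies in $V$, the set $Y' = $ (the realization of this trace) is a ground-model subset of $C$.  Since $\calF$ is maximal in $V$ and $C$ is $\calI_\calF$-positive, dense maximality yields $r \in \Add(\kappa,|\calF|)$ with $X_r \subseteq C$ and either $X_r \cap Y' = \emptyset$ or $X_r \subseteq Y'$.  Pulling this back through $q$, since $q$ forces $\dot Y \cap C = Y'$ and $X_r \subseteq C$, we get that $q$ forces $\dot Y \cap X_r = \emptyset$ or $X_r \subseteq \dot Y$, as required for $\calF$ to remain maximal.  (Property~(1), pseudo-intersections of $\le\kappa$-sized subfamilies of $\calI_\calF^*$, is used to ensure $C$ can be taken inside $\calI_\calF^*$ while respecting the countably-or-$\kappa$-many constraints accumulated along the fusion, and to handle the bookkeeping over all $<\kappa$ many coordinates that become active.)

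The main obstacle I anticipate is the fusion itself: in $\bbS_\kappa^\lambda$ with $\kappa$-support, at a stage $\alpha$ one must simultaneously split on a growing set of coordinates while only decreasing finitely-much information per coordinate per step, and at limit stages of cofinality $<\kappa$ one needs the greatest-lower-bound property to glue, whereas at limit stages of cofinality $\ge\kappa$ one must have arranged that only $<\kappa$ coordinates are ``live''.  Getting the bookkeeping right so that (a) the fusion is legitimate, (b) every ordinal $<\kappa$ is eventually decided, and (c) the growth rate $\lan\gamma_\alpha\ra$ is slow enough to be captured by hypothesis~(2), is the technical heart of the proof; the rest is a routine application of dense maximality in $V$.
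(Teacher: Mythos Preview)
Your argument has a genuine gap at the step where you claim that the fusion $q$ forces $\dot Y \cap C$ to equal a single ground-model set $Y'$.  This is false: the fusion only arranges that each node at the appropriate splitting level of $q$ decides membership of the relevant ordinals in $\dot Y$, but the \emph{different} nodes at that level will in general make \emph{different} decisions.  Thus $q$ forces $\dot Y\cap C$ to lie among $2^\kappa$-many possible traces (one for each branch), not to equal one fixed $Y'\in V$.  Your sentence ``$q$ reduces the value of $\dot Y\cap C$ to one of at most $2^{<\kappa}\cdot\lambda^{<\kappa}$ many possibilities \emph{but more importantly} the trace lies in $V$'' is self-contradictory: the first clause is roughly correct, the second is the error, and it is precisely the second clause you need.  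Once this step fails, the appeal to dense maximality of $\calF$ in $V$ applied to a single $Y'$ no longer goes through.

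The paper's proof is organized quite differently and avoids this problem.  It argues by contradiction: assume some $r\in\bbS_\kappa$ forces, for some fixed $p\in\Add(\kappa,|\calF|)$, that $\dot A\subseteq X_p$, that $X_p\setminus\dot A\notin\calI_\calF$, and that $X_q\cap\dot A\neq\emptyset$ for every $q\leq p$.  For each splitting node $t$ of (a preprocessed) $r$ one forms the \emph{outer hull} $Y_t=\{\beta:\text{some } r_{t'}\text{ with }t'\text{ comparable to }t\text{ forces }\beta\in\dot A\}$.  The hypothesis that no $X_q$ is disjoint from $\dot A$ forces, via dense maximality in $V$, that each $Y_t\cup(\kappa\setminus X_p)\in\calI_\calF^*$.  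Property~(1) then yields a single $C\in\calI_\calF^*$ almost contained in every $Y_t$, and property~(2) gives the sparseness of $C'$ needed so that one can build a fusion sequence below $r$ forcing each successive element of $C'\cap X_p$ into $\dot A$.  The fusion then forces $X_p\setminus\dot A\subseteq\kappa\setminus C'\in\calI_\calF$, contradicting the choice of $r$.  The key conceptual move you are missing is the outer-hull device: it converts the failure of maximality in the extension into a $\kappa$-sized family of ground-model sets in $\calI_\calF^*$, to which properties~(1) and~(2) can then be applied.
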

\begin{proof}
Since the proof below is a straightforward generalization of the proof appearing in~\cite[Theorems 43 and 45]{VFDM2} we give only an outline in the case $\lambda=1$. Note that a $(\kappa,\kappa)$-independent family $\calF$ is densely maximal iff for every $p\in\Add(\kappa,|\calF|)$ and every $A\subseteq X_p$ either $X_p\backslash A\in\calI_\calF$ or there is $q\leq p$ such that $X_q\subseteq X_p\backslash A$ (see~\cite[Lemma 27]{VFDM2}). Thus, it is sufficient to show that in $V^{\bbS_\kappa}$ for every $A\subseteq \kappa$ and every $p\in\Add(\kappa,|\calF|)$ such that $A\subseteq X_p$ the following holds:
$$\hbox{Either }X_p\backslash A\in\calI_\calF\hbox{ or }\exists q\leq p\hbox{ such that }X_q\subseteq X_p\backslash A.$$
Suppose this is not the case.  Since $\bbS_\kappa$ is $\kappa$-closed, it does not add ${<\kappa}$-sequences of members of $\calF$. Thus we can find $r\in\bbS_\kappa$, an $\bbS_\kappa$-name $\dot{A}$ and $p\in\Add(\kappa,|\calF|)^V$ such that 
$$r\Vdash_{\bbS_\kappa}\dot{A}\subseteq X_p\land X_p\backslash \dot{A}\notin\calI_\calF\land\forall q\leq p(X_q\cap\dot{A}\neq\emptyset).$$
Without loss of generality $r\in\bbS_\kappa$ is preprocessed for $\dot{A}$ (see~\cite[Definition 35 and Lemma 37]{VFDM2}), i.e. for all $t\in\split_\alpha(r)$ there is $x_t\in{^\alpha 2}$ such that $r_t\Vdash_{\bbS_\kappa}\dot{A}\upharpoonright\alpha=\check{x}_t$. For each $t\in\split(r)$ let $$Y_t=\{\beta\in\kappa:\exists t'\in\split(r)\hbox{ s.t. }t'\hbox{ is comparable with }t\hbox{ and }r_{t'}\Vdash\check{\beta}\in\dot{A}\},$$ i.e. $Y_t$ is the outer hull of $\dot{A}$ below $r_t$ (see~\cite[Definition 38]{VFDM2}). Then $Y_t\subseteq X_p$ and for all $t\in\split(r)$ $Y_t\cup\kappa\backslash X_p\in\calI_\calF^*$. This is because, by dense maximality, either $X_p \setminus Y_t \in \calI_\calF$, or there is $q \leq p$ such that $X_q \subseteq X_p \setminus Y_t$.  But the latter cannot occur, since $r_t \Vdash \emptyset \not= X_q \cap \dot A \subseteq X_q \cap Y_t$.

By property $(1)$ of $\calI^*_\calF$, there is $C\in\calI_\calF^*$ such that $C \subseteq^* Y_t\cup\kappa\backslash X_p$  for each $t\in\split(p)$ (where $D \subseteq^* E$ means $D \setminus E$ is bounded in $\kappa$).
Then $C\cap X_p\subseteq^* Y_t$ for all $t\in\split(r)$ and so there is $f\in{^\kappa\kappa}$ such that for all $\alpha<\kappa$
$$(C\cap X_p)\backslash f(\alpha)\subseteq\bigcap_{t\in\split_{\alpha+1}(r)}Y_t.$$
Without loss of generality, $f$ is strictly increasing. By property $(2)$ of $\calI_\calF^*$ there is $C^*\in\calI_\calF^*$ such that for all $\alpha\in C^*$ and all $\gamma\in\alpha\cap C^*$, $f^2(\gamma)<\alpha$. Now, let $C'=C\cap C^*\cap (f(1),\kappa)$. Thus, $C'\in\calI_\calF^*$. Let $\{k(\alpha)\}_{\alpha\in\kappa}$ be the increasing enumeration of $C'\cap X_p$. Recursively construct a fusion sequence $\tau=\langle r_\alpha:\alpha<\kappa\rangle$ below $r$ such that
$$\forall \alpha<\kappa, r_\alpha\Vdash k(\alpha)\in\dot{A}.$$
Then the fusion $r^*$ of $\tau$ will force $``C'\cap X_p\subseteq\dot{A}"$ and so 
$$r^*\Vdash_{\bbS_\kappa}X_p\backslash\dot{A}\subseteq X_p\backslash C'.$$
However $X_p\backslash C'\subseteq \kappa\backslash C'\in \calI_\calF$, so we obtain $r^*\Vdash_{\bbS_\kappa} X_p\backslash \dot{A}\in\calI_\calF$, which is a contradiction to the choice of $r$.
\end{proof}

\begin{thm}\label{strong_indestructible}
It is consistent relative to a measurable that
$$\mathfrak{i}_\kappa(\kappa)<2^\kappa,$$
where $\kappa>\omega$.  Furthermore, it is consistent relative to a supercompact that for some $\kappa>\omega$, $\kappa^+<2^\kappa$ and $\{ \kappa^+,2^\kappa \} \subseteq \spec(\kappa,\kappa)$.
\end{thm}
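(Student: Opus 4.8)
The plan is to prove both statements via one scheme: produce a $(\kappa,\kappa)$-dmif $\calF$ whose density ideal $\calI_\calF$ is normal, then force with a large $\kappa$-supported product $\bbS_\kappa^\lambda$ of $\kappa$-Sacks forcing. By Lemma~\ref{lem.Sacks.ind}, $\calF$ remains maximal; since $\bbS_\kappa^\lambda$ is $\kappa$-closed, $\kappa$ is preserved and each Boolean combination $X_p$ (computed already in the ground model) keeps size $\kappa$, so $\calF$ stays $(\kappa,\kappa)$-independent and is thus a $(\kappa,\kappa)$-mif in the extension. Taking $\lambda$ regular and large enough that $2^\kappa$ becomes $\lambda > |\calF|$, we obtain $\fri_\kappa(\kappa) \le |\calF| < 2^\kappa$. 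Here $\bbS_\kappa^\lambda$ is $\kappa$-directed-closed, by the folk fact recalled above that $\kappa$-closed posets with greatest lower bounds are $\kappa$-directed-closed together with \cite[Lemma~1.2]{Kanamori}, so Theorem~\ref{indest} is applicable to it.

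For the statement about a supercompact, I would first pass to the Laver-prepared model $V_1$ of Theorem~\ref{indest}, where $\GCH$ holds from $\kappa$ upward. Running the proof of Theorem~\ref{indest} with the trivial poset in place of its quotient forcing (so that the parameter $\lambda$ there equals $\kappa^+$) gives in $V_1$ a normal $\kappa$-complete ideal $\calI$ on $\kappa$ containing $[\kappa]^{<\kappa}$ with $\p(\kappa)/\calI \cong \Add(\kappa,\kappa^+)$; as $\calI$ is $\kappa^+$-saturated, Lemma~\ref{converse} yields a $(\kappa,\kappa)$-dmif $\calF$ of size $\kappa^+$ with $\calI_\calF = \calI$. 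Now fix a regular $\lambda$ with $\kappa^+ < \lambda < \aleph_\eta$, $\eta = (\kappa^{++})^{L(\p(\kappa))}$, and force $\bbS_\kappa^\lambda$ over $V_1$. In the extension $2^\kappa = \lambda$, and by the scheme above $\calF$ is still a $(\kappa,\kappa)$-mif of size $\kappa^+$, so $\kappa^+ \in \spec(\kappa,\kappa)$; moreover $\bbS_\kappa^\lambda$ is $\kappa$-directed-closed and forces $2^\kappa = \lambda < \aleph_\eta$, so Theorem~\ref{indest} gives $2^\kappa = \lambda \in \spec(\kappa,\kappa)$. Thus $\{\kappa^+,2^\kappa\} \subseteq \spec(\kappa,\kappa)$ with $\kappa^+ < 2^\kappa$.

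For the statement about a measurable, it suffices to produce from a measurable cardinal a model $V_1$ with a $(\kappa,\kappa)$-dmif $\calF$ whose density ideal is normal; then forcing $\bbS_\kappa^\lambda$ over $V_1$ for a regular $\lambda > |\calF|$ gives $\fri_\kappa(\kappa) \le |\calF| < \lambda = 2^\kappa$ by the scheme above. Such a family I would obtain by imitating Theorem~\ref{indest} in the measurable setting: from a measurable $\kappa$ with $\GCH$ and normal measure $\calU$, perform a reverse-Easton preparation $\bbP_\kappa$ below $\kappa$ whose stage at $\kappa$ is $\Add(\kappa,\kappa^+)$, arranged so that the tail of $j_\calU(\bbP_\kappa)$ above $\kappa$ is internally generic over the ultrapower; lifting $j_\calU$ to $V_1$ and letting $\calI = \{X \subseteq \kappa : 1 \Vdash_{\Add(\kappa,\kappa^+)} \kappa \notin j(X)\}$ gives a normal $\kappa$-complete ideal with $\p(\kappa)/\calI \cong \Add(\kappa,\kappa^+)$, to which Lemma~\ref{converse} again applies, producing a $(\kappa,\kappa)$-dmif of size $\kappa^+$.

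The remaining verifications are routine: hypotheses (1) and (2) of Lemma~\ref{lem.Sacks.ind} hold for $\calI_\calF$ because it is normal (a normal filter on $\kappa$ is closed under diagonal intersections, which gives pseudo-intersections of $\le\kappa$-sized subfamilies, and contains the club filter, which gives the fast-function property), and the fusion and $\Delta$-system analysis of $\kappa$-Sacks products from~\cite{VFDM2} shows $\bbS_\kappa^\lambda$ preserves cardinals, so that $|\calF| < 2^\kappa$ genuinely holds in the final model. The main obstacle is arranging that $\p(\kappa)/\calI_\calF$ be exactly $\Add(\kappa,\kappa^+)$: a direct application of Foreman's Duality Theorem with a Cohen poset at $\kappa$ instead produces an ideal whose quotient is too closed — it is $\kappa^+$-distributive, hence adds no new subset of $\kappa$ — so the preparation must be designed with care. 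In the supercompact case this is precisely what the Laver preparation of Theorem~\ref{indest} accomplishes; in the measurable case it requires a delicate choice of $\bbP_\kappa$ ensuring the tail of the lifted iteration can be built internally over the target model.
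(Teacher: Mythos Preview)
Your proposal is correct and takes essentially the same approach as the paper: obtain (via Kunen's construction from a measurable, or the Laver preparation of Theorem~\ref{indest} in the supercompact case) a $(\kappa,\kappa)$-dmif of size $\kappa^+$ with normal density ideal, preserve it under $\bbS_\kappa^\lambda$ via Lemma~\ref{lem.Sacks.ind}, and for the second claim also invoke Theorem~\ref{indest} to place $2^\kappa$ in $\spec(\kappa,\kappa)$. Your closing motivational remark about Foreman's Duality Theorem is slightly misstated (Theorem~\ref{foreman} as formulated requires the forcing to be c.c.\ below the completeness of the ideal, so it does not literally apply to $\Add(\kappa,\theta)$), but this is tangential and does not affect the main argument.
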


\begin{proof}
For the first claim, Kunen \cite{Kunen} showed that if $\kappa$ is measurable, then there is a forcing extension satisfying $\GCH$, preserving $\kappa$, and in which there is a normal ideal $\calI$ on $\kappa$ such that $\p(\kappa)/\calI \cong \Add(\kappa,2^\kappa)$.  By Lemma \ref{converse}, $\calI = \calI_\calF$ for some $(\kappa,\kappa)$-dmif $\calF$.  The forcing construction is essentially the same as for Theorem~\ref{indest}, but we simply force with $\Add(\alpha,2^\alpha)$ at inaccessible stages $\alpha<\kappa$. Over this model, we then force with $\bbS_\kappa^\lambda$ for any $\lambda>\kappa^+$.  A standard fusion argument shows that $\bbS_\kappa^\lambda$ preserves the property ``$\cf(\eta) > \kappa$,'' so $\kappa^+$ is preserved.  A standard $\Delta$-system argument shows that $\bbS_\kappa^\lambda$ has the $(2^\kappa)^+$-c.c.  By Lemma \ref{lem.Sacks.ind}, the maximality of $\calF$ is preserved by $\bbS_\kappa^\lambda$.  Since $\bbS_\kappa^\lambda$ forces $2^\kappa\geq\lambda$, the desired conclusion holds.

If $\kappa$ is supercompact, then by Theorem \ref{indest}, we can pass to an extension satisfying $\GCH$ at and above $\kappa$, and in which there is a $(\kappa,\kappa)$-dmif with normal density ideal.  $\bbS_\kappa^\lambda$ is $\kappa$-directed-closed for all $\lambda$, so if $\xi<(\kappa^{++})^{L(\p(\kappa))}$ is a successor and $\lambda = \kappa^{+\xi}$, then the statement ``$2^\kappa \in \spec(\kappa,\kappa)$'' holds after forcing with $\bbS_\kappa^\lambda$.  Thus in the extension, there are $(\kappa,\kappa)$-dmif's of size $\kappa^+$ and $\kappa^{+\xi}$.
\end{proof}

\section{Restrictions}\label{section.restrictions}

The methods of Section \ref{section.spectrum} to separate $\fri_\mu(\kappa)$ from $2^\kappa$ may seem a bit unsatisfying because of their reliance on the peculiarities of singular cardinal arithmetic.  The main purpose of this section is to show that this is in some sense necessary.

\begin{thm}
\label{specrest}
Suppose $\mu>\omega$ and there is a $(\mu,\kappa)$-dmif $\calF$ such that $\mu<\lambda_{\calF}$.  Then $|\calF| \geq \min\{2^{\lambda_{\calF}},\lambda_{\calF}^{+\omega} \}$.  In particular, $i_\mu(\kappa) \geq \min\{ 2^{\lambda_\mu},\lambda_\mu^{+\omega} \}$.
\end{thm}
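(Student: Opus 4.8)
Write $\theta = |\calF|$ and $\lambda = \lambda_{\calF}$, so $\mu < \lambda$. The plan is to reduce to a statement about the density ideal and then run a Gitik--Shelah style argument exploiting the gap between $\mu$ and $\lambda$. First I would unpack the hypotheses using results already available. By Lemma~\ref{lem:ideal}, $\calI := \calI_{\calF}$ is a uniform $\mu$-complete ideal on $\kappa$ with $\p(\kappa)/\calI$ forcing-equivalent to $\Add(\mu,\theta)$; by Lemma~\ref{lem:gch}, $\calI$ is $\mu^+$-saturated, $2^{<\mu} = \mu$, and $\lambda$ (its completeness) is a weakly Mahlo cardinal with $\alpha^\beta < \lambda$ for all $\alpha < \lambda$, $\beta < \mu$ (so $\lambda^{<\mu} = \lambda$) and $2^\mu \geq \lambda$; recall $\mu < \lambda \leq \kappa$. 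Proposition~\ref{ineq2} gives $\kappa < \theta^{<\mu}$, and since $\lambda^{<\mu} = \lambda$ this forces $\theta > \lambda$. So if $|\calF| < \lambda^{+\omega}$ we are in the case $\theta = \lambda^{+n}$ for some $1 \leq n < \omega$, and the arithmetic around $\lambda$ then yields $\theta^{<\mu} = \theta^\mu = \theta$ and $\theta^\lambda = 2^\lambda$. Thus it suffices to derive a contradiction from ``$\theta = \lambda^{+n} < 2^\lambda$''.

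Next I would pass to the generic ultrapower. Let $G$ be $\Add(\mu,\theta)$-generic and $j : V \to M \subseteq V[G]$ the embedding coming from the induced generic ultrafilter on $\p(\kappa)/\calI$, with $\crt(j) = \lambda$ and --- by Foreman's theorem, using that $\calI$ is saturated --- $M^\lambda \cap V[G] \subseteq M$. Two facts are used repeatedly: every subset of $\lambda$ is coded by a $\lambda$-sequence of bits, hence lies in $M$, so $\p(\lambda)^M = \p(\lambda)^{V[G]}$; and since $\Add(\mu,\theta)$ has size $\theta$ and $\theta^\lambda = 2^\lambda$, it preserves $2^\lambda$, so $(2^\lambda)^{V[G]} = (2^\lambda)^V > \theta$. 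In particular $M \models 2^\lambda > \theta$, while $\mu = j(\mu) < \lambda < j(\lambda)$ and $j(\theta) = j(\lambda)^{+n}$ in $M$. The core of the proof is then a counting argument in the spirit of Gitik--Shelah~\cite{gs2}: feeding the auxiliary Cohen posets $\bbQ_k = \Add(\mu,\lambda^{+k})$, $k \leq n$ (each $\mu^+$-c.c., hence $\lambda$-c.c., so Foreman's Duality Theorem~\ref{foreman} applies) into the duality yields dense embeddings of $\bbQ_k * \p(\kappa)/\bar\calI$ into $\calB(\p(\kappa)/\calI * j(\bbQ_k))$, and one analyzes $\p(\kappa)/\calI * j(\bbQ_k)$ --- which as a forcing is $\Add(\mu,\theta) * j(\bbQ_k)$ with $j(\bbQ_k) = \Add(\mu, j(\lambda^{+k}))$ --- inside $V[G]$. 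Since $M$ is $\lambda$-closed, the restrictions $j\restriction\lambda^{+k}$ and all the ``new'' Cohen subsets of $\mu$ added by $G$ lie in $M$; using this one recovers ever larger fragments of $G$, i.e.\ more and more mutually Cohen-generic subsets of $\mu$, from objects already in $M$. Iterating through $k = 1, \dots, n$ this forces the number of mutually Cohen-generic subsets of $\mu$ that $\p(\kappa)/\calI$ adds --- which for $\Add(\mu,\theta)$ is exactly $\theta = \lambda^{+n}$ --- to be at least $\min\{(2^\lambda)^V, \lambda^{+\omega}\}$, contradicting $\theta < 2^\lambda$. I expect this last step to be the main obstacle: turning ``$\calI$ is $\lambda$-complete while its quotient is only $<\mu$-closed'' into the precise bound $\min\{2^\lambda, \lambda^{+\omega}\}$ --- in particular seeing why the induction stalls exactly at $\lambda^{+\omega}$ and at $2^\lambda$ --- is the substantive content, and the bookkeeping is delicate.

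For the concluding ``in particular'': a $(\mu,\kappa)$-mif of size $\fri_\mu(\kappa)$ restricts, as in the proof of Lemma~\ref{lem:ideal}, to a $(\mu,\kappa')$-dmif $\calF_0$ of the same cardinality; since $\calF_0$ is a dmif, $\lambda_{\calF_0} \geq \lambda_\mu$, so in the relevant case $\mu < \lambda_\mu$ the hypothesis $\mu < \lambda_{\calF_0}$ holds and the theorem gives $\fri_\mu(\kappa) = |\calF_0| \geq \min\{2^{\lambda_{\calF_0}}, \lambda_{\calF_0}^{+\omega}\} \geq \min\{2^{\lambda_\mu}, \lambda_\mu^{+\omega}\}$, using that $x \mapsto \min\{2^x, x^{+\omega}\}$ is non-decreasing.
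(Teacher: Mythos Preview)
Your setup is fine --- the reductions to $\theta=\lambda^{+n}<2^\lambda$ and the facts from Lemma~\ref{lem:gch} are correct --- but the core of the argument is missing, and you essentially say so yourself: the paragraph about ``feeding $\bbQ_k=\Add(\mu,\lambda^{+k})$ into the duality'' and ``recovering ever larger fragments of $G$'' is not a proof, and there is no known way to make that specific induction work. Foreman's Duality tells you what $\p(\kappa)/\bar\calI$ looks like after forcing with $\bbQ_k$, but it does not by itself produce extra mutually Cohen-generic subsets of $\mu$ out of the closure of $M$; the actual Gitik--Shelah machinery runs on quite different tracks (partial squares, reflecting statements about meager covers of Cohen reals, and a careful analysis of names living in small regular suborders).

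The paper's route avoids this by a reduction you do not attempt. It first projects $\calI_\calF$ from $\kappa$ down to $\lambda$ via $\calJ=\{X\subseteq\lambda:1\Vdash\lambda\notin j(X)\}$, so that one is now dealing with a $\lambda$-complete ideal \emph{on $\lambda$} rather than on $\kappa$. The relevant structural property of $\Add(\mu,\theta)$ that survives this projection is not ``$\mu$-closed'' but the more robust ``$\nu$-c.c.\ and $\nu$-cofinally layered for unboundedly many $\nu<\lambda$'' (the latter meaning: atomless regular suborders of size ${<}\nu$ are cofinal in $\p_\nu(\bbP)$). One checks that $\p(\lambda)/\calJ$ embeds regularly into $\p(\kappa)/\calI$ and that cofinal layering passes down along projections, so $\p(\lambda)/\calJ$ has a dense subposet of size $\le\theta$ with this layering property. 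At that point a separate black-box theorem (Theorem~\ref{gsgen}, an adaptation of \cite{gs2}) says: for any $\lambda$-complete ideal on $\lambda$ whose quotient is atomless and $\nu$-cofinally layered for cofinally many $\nu<\lambda$, the quotient has size at least $\min\{2^\lambda,\lambda^{+\omega}\}$. The heavy lifting --- partial squares, Cohen-real/meager-set combinatorics --- is inside that theorem, not in the deduction of Theorem~\ref{specrest}. Your final paragraph deriving the ``in particular'' is correct.
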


By the proof of Theorem \ref{lowi}, the above lower bound is optimal. We will make use of the following notion:



\begin{defi} A partial order $\bbP$ is said to be \emph{$\kappa$-cofinally layered}, if there is a set of atomless regular suborders of size $<\kappa$ which is cofinal in $\p_\kappa(\bbP)$.  
\end{defi}

If $\mu<\kappa$ are such that $\alpha^{<\mu}<\kappa$ for all $\alpha<\kappa$, then for all $\theta$, $\Add(\mu,\theta)$ is $\kappa$-cofinally layered.

\begin{thm}
\label{gsgen}
Suppose $\kappa>\omega$.  If $\calI$ is a $\kappa$-complete ideal on $\kappa$ and $\p(\kappa)/\calI$ is forcing-equivalent to an atomless partial order $\bbP$ that is $\nu$-c.c.\ and $\nu$-cofinally layered for unboundedly many $\nu<\kappa$, then $| \bbP | \geq \min\{ 2^\kappa,\kappa^{+\omega}\}$.
\end{thm}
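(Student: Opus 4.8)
The plan is to prove Theorem~\ref{gsgen} by a uniform induction. It suffices to show $|\bbP|\ge\min\{2^\kappa,\kappa^{+n}\}$ for every $n<\omega$, since the supremum over $n$ then gives $|\bbP|\ge\min\{2^\kappa,\kappa^{+\omega}\}$. Contrapositively, I will prove by induction on $n\ge1$: if $\p(\kappa)/\calI\cong\bbP$ with $\bbP$ atomless and $\nu$-c.c.\ and $\nu$-cofinally layered for unboundedly many $\nu<\kappa$, and $|\bbP|\le\kappa^{+(n-1)}$, then $2^\kappa\le\kappa^{+(n-1)}$; the case $n=1$, which asserts that no $\kappa$-complete ideal on $\kappa$ has an atomless quotient of size $\le\kappa$, is the base of the induction.

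In every case one runs the same forcing. Since $\bbP$ is $\nu$-c.c.\ for some $\nu<\kappa$ it is $\kappa$-c.c., so $\calI$ is $\kappa^+$-saturated, hence precipitous, and $\kappa$ is necessarily regular. Forcing with $\p(\kappa)/\calI\cong\bbP$ yields a generic ultrapower $j:V\to M\subseteq V[G]$ with $\crt(j)=\kappa$, and by Foreman's analysis of saturated ideals $M^\kappa\cap V[G]\subseteq M$; in particular $\p(\kappa)^M=\p(\kappa)^{V[G]}$. In the base case $n=1$, $\bbP$ has a dense subset of size $\le\kappa$ lying in $V$, so the induced $V$-generic ultrafilter on $\p(\kappa)/\calI$ is coded by a subset of $\kappa$ and therefore belongs to $M$; this contradicts the standard fact that a generic ultrafilter generating a generic ultrapower of $V$ is never an element of the ultrapower.

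For the inductive step, assume $|\bbP|=\theta:=\kappa^{+(n-1)}$ (by the induction hypothesis there is nothing to prove if $|\bbP|$ is smaller) and, toward a contradiction, $2^\kappa\ge\kappa^{+n}$. A standard cardinal-arithmetic computation ($\theta^\kappa=2^\kappa$, since $\kappa\le\theta\le 2^\kappa$) together with the $\kappa^+$-c.c.\ of $\bbP$ gives $(2^\kappa)^{V[G]}=(2^\kappa)^V$; since $\p(\kappa)^M=\p(\kappa)^{V[G]}$ and every $V[G]$-cardinal is an $M$-cardinal, $M\models 2^\kappa\ge\kappa^{+n}$. The plan is now to produce \emph{inside $M$} a $\kappa$-complete ideal $\calK$ on $\kappa$ whose quotient $\p(\kappa)^M/\calK$ is atomless and is $\nu$-c.c.\ and $\nu$-cofinally layered for unboundedly many $\nu<\kappa$, with $|\p(\kappa)^M/\calK|\le\theta$: applying the induction hypothesis inside $M$ to $\calK$ gives $M\models 2^\kappa\le\kappa^{+(n-1)}<\kappa^{+n}$, the desired contradiction.

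Producing $\calK$ is the heart of the matter and, I expect, the main obstacle; it is here that the cofinal layering is used essentially, in the spirit of the covering and reflection lemmas of Gitik--Shelah~\cite{gs2}. The plan: fix $\nu<\kappa$ with the layering property, decompose $\bbP$ as a ``matrix'' of small regular atomless suborders — an increasing continuous union $\bigcup_{\xi<\theta}\bbP_\xi$ of regular atomless suborders of size $<\theta$, each in turn broken by the $\nu$-layering into an increasing union of regular atomless suborders of size $<\kappa$ — observe that $j$ is the identity below $\kappa$ and transports this decomposition to a decomposition of $j(\bbP)$ in $M$ (whose layering witnesses $\nu$ may be taken in $[\kappa,j(\kappa))$), and then use $\p(\kappa)^M=\p(\kappa)^{V[G]}$ together with the interaction of the $V$-generic with $j[\bbP]=\bbP$ to exhibit $\calI$, re-presented from data that lies in $M$, as an element of $M$ whose quotient is still $\cong\bbP$; take $\calK=\calI$. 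Making this rigorous — showing that $M$ genuinely contains an ideal realizing the same small, layered, atomless quotient on $\kappa$ — is the technical core, and ``atomless, $\nu$-cofinally layered for unboundedly many $\nu<\kappa$'' is precisely the homogeneity that makes the localization possible. Given Theorem~\ref{gsgen}, Theorem~\ref{specrest} follows by applying it with $\kappa:=\lambda_\calF$ to the ideal on $\lambda_\calF$ obtained from $\calI_\calF$ by restricting to a positive set witnessing its completeness and transferring along a partition of that set into $\lambda_\calF$-many $\calI_\calF$-null sets: by Lemma~\ref{lem:ideal} the resulting quotient is $\Add(\mu,|\calF|)$, which is $\mu^+$-c.c.\ with $\mu^+<\lambda_\calF$ and, by Lemma~\ref{lem:gch}(\ref{muinacc}), $\nu$-cofinally layered for club-many $\nu<\lambda_\calF$, so the conclusion $|\calF|\ge\min\{2^{\lambda_\calF},\lambda_\calF^{+\omega}\}$ drops out.
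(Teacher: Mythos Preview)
Your proposal is a plan rather than a proof, and the plan has two genuine gaps.

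\textbf{Circularity in the induction.} You set $S(n)$ to be ``$|\bbP|\le\kappa^{+(n-1)}\Rightarrow 2^\kappa\le\kappa^{+(n-1)}$'' and assume only $S(m)$ for $m<n$. In the inductive step you produce (or hope to produce) $\calK\in M$ with $|\p(\kappa)^M/\calK|\le\theta=\kappa^{+(n-1)}$ and then say ``applying the induction hypothesis inside $M$ gives $M\models 2^\kappa\le\kappa^{+(n-1)}$.'' But that conclusion is exactly $S(n)$ applied inside $M$, not any $S(m)$ with $m<n$. To use the genuine IH you would need the quotient in $M$ to have size $\le\kappa^{+(n-2)}$, and nothing in your plan produces that.

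\textbf{The construction of $\calK$ is not carried out, and the natural candidates fail.} You propose $\calK=\calI$, but $\calI$ is a subset of $\p(\kappa)^V$ of size $2^\kappa$ and there is no reason for it to be an element of $M$; the $\kappa$-closure of $M$ in $V[G]$ only gives you $\kappa$-sequences. The canonical replacement is the ideal $\bar\calI$ generated by $\calI$ in $V[G]$ (equivalently in $M$, since $\p(\kappa)^M=\p(\kappa)^{V[G]}$). By Foreman's duality, $\p(\kappa)^{V[G]}/\bar\calI\cong j(\bbP)/G$, and in $M$ this has size on the order of $j(\theta)=j(\kappa)^{+(n-1)}$, which is \emph{larger} than $\theta$, not smaller. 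So the inductive descent does not get off the ground. Your matrix-decomposition sketch does not address either obstacle.

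\textbf{How the paper proceeds.} The paper's argument is not inductive and looks nothing like your plan. After reducing to $\kappa<\lambda=|\bbP|<2^\kappa<\kappa^{+\omega}$ (and citing Gitik--Shelah for $\lambda\le\kappa$, which is your base case), the core is a direct proof that $2^{<\kappa}\le\lambda$. This uses the layering to capture $\bbP$-names for elements of $j(\p(\mu))$ in small regular suborders, collapses such a suborder to $\omega$ to turn those names into Cohen-real names, and then runs a delicate combinatorial argument with Shelah's partial square sequences, Fodor, and non-meagerness of ``outer hulls'' to reach a contradiction. A second short claim bounds $j(2^\mu)$ below $\lambda^+$ for all $\mu<\kappa$, and a final cardinal-arithmetic computation (using that there are only finitely many cardinals between $\lambda$ and $2^\kappa$) yields $2^{<\kappa}=2^\kappa$, the contradiction. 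The layering is used concretely to localize names into small regular suborders, not to transport $\calI$ into $M$.

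Your final paragraph on deriving Theorem~\ref{specrest} from Theorem~\ref{gsgen} is along the right lines and matches the paper's reduction via a projected ideal on $\lambda_\calF$.
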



Let us first show how Theorem \ref{specrest} follows from Theorem \ref{gsgen}.  Let $\lambda = \lambda_{\calF}$, and let $\calI$ be a uniform ideal on $\kappa$ such that $\p(\kappa)/\calI \cong \Add(\mu,\theta)$, where $\theta<\min\{2^{\lambda},\lambda^{+\omega}\}$.  For all $\alpha<\lambda$, $\Add(\mu,\theta)$ is $(\alpha^{<\mu})^+$-cofinally layered, so by Lemma \ref{lem:gch}, it is $\nu$-cofinally layered for unboundedly many $\nu<\lambda$.  Also, $\theta^{<\mu} = \theta$, so $|\Add(\mu,\theta)| = \theta$.  Let $\calJ$ be the projection of $\calI$ to a $\lambda$-complete ideal on $\lambda$: $\calJ= \{ X \subseteq \lambda : 1 \Vdash_{\bbP} \lambda \notin j(X) \}$. By the claim below and Theorem \ref{gsgen} we get $\theta\geq\min\{2^\lambda,\lambda^{+\omega}\}$.
\begin{claim}
$\p(\lambda)/\calJ$ has a dense set isomorphic to poset of size $\leq \theta$ that is $\nu$-cofinally layered for unboundedly many $\nu<\lambda$.
\end{claim}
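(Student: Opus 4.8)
The plan is to prove the Claim by realizing $\calJ$ as a pushforward of $\calI$ and transporting the structure of $\p(\kappa)/\calI\cong\Add(\mu,\theta)$ along the induced embedding. First fix a partition $\langle A_\xi:\xi<\lambda\rangle$ of $\kappa$ into members of $\calI$ witnessing that the completeness of $\calI$ is $\lambda$, and let $h\colon\kappa\to\lambda$ send each point of $A_\xi$ to $\xi$. A standard analysis of the generic ultrapower (passing below a condition if necessary) shows $1\Vdash[h]_{\dot G}=\lambda=\crt(\dot j)$, so that $\calJ=\{Z\subseteq\lambda:h^{-1}[Z]\in\calI\}$. Hence $e\colon\p(\lambda)/\calJ\to\p(\kappa)/\calI$, $[Z]_\calJ\mapsto[h^{-1}[Z]]_\calI$, is well defined, injective, order preserving and incompatibility preserving; in particular $\calJ$ inherits the $\mu^+$-saturation of $\calI$ (Lemma~\ref{lem:gch}), and since it is also $\lambda$-complete with $\mu<\lambda$, $\p(\lambda)/\calJ$ is a complete Boolean algebra in which every supremum is realized by a maximal antichain of size $\le\mu$.

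Next I would check that $e$ is a \emph{regular} (supremum-preserving) embedding. By the previous remark it is enough to check preservation of suprema of antichains of size $\le\mu$; but as $\mu<\lambda$ and $\calI,\calJ$ are $\lambda$-complete, such suprema are just unions on both sides, which $h^{-1}$ respects. Viewing $\p(\kappa)/\calI$ as a dense subset of $\mathbb{B}:=\mathrm{RO}(\Add(\mu,\theta))$, we get that $\p(\lambda)/\calJ$ is isomorphic to a complete subalgebra $\mathbb{D}\le\mathbb{B}$. Let $\pi\colon\mathbb{B}\to\mathbb{D}$ be the projection $b\mapsto\bigwedge\{d\in\mathbb{D}:d\ge b\}$; it preserves suprema, since $\sup_i\pi(b_i)$ lies in $\mathbb{D}$ and dominates $\sup_i b_i$. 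Because $\theta^{<\mu}=\theta$, the set $\mathbb{D}_0:=\pi[\Add(\mu,\theta)]$ is dense in $\mathbb{D}$ and has size $\le\theta$; this is the witnessing poset. (It is atomless because $\mathbb{D}$ is: an atom of $\mathbb{D}$ would carry a $\lambda$-complete uniform ultrafilter on a subset of $\lambda$ of size $\lambda$, so $\lambda$ would be measurable, in which case one works below a condition of $\mathbb{D}$ beneath which it is atomless.)

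For the layering, I would fix a regular $\nu<\lambda$ with $\nu>\mu$ and $\alpha^{<\mu}<\nu$ for all $\alpha<\nu$; such $\nu$ are unbounded in $\lambda$ by Lemma~\ref{lem:gch}(\ref{muinacc}) and the inaccessibility of $\lambda$. For $B\subseteq\theta$ put $\mathbb{D}_B:=\mathbb{D}\cap\mathrm{RO}(\Add(\mu,B))$, a complete subalgebra of $\mathbb{D}$. Given $S\in[\mathbb{D}_0]^{<\nu}$, choose $p_s\in\Add(\mu,\theta)$ with $\pi(p_s)=s$ and build $B$ by closing $\bigcup_{s\in S}\supp(p_s)$ under: (i) adjoining, for a condition $p$ supported in the set built so far, the supports of a maximal antichain of $\Add(\mu,\theta)$ below the regular open set $\pi(p)$; and (ii) adjoining, for each atom of the subalgebra built so far which is not an atom of $\mathbb{D}$, the supports of a maximal antichain below some $\mathbb{D}$-element properly below it. Since $\nu$ is regular, $\alpha^{<\mu}<\nu$ for $\alpha<\nu$, and each relevant subalgebra is $\mu^+$-c.c.\ (so has $\le\mu$ atoms), $B$ has size $<\nu$. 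By (i) and supremum-preservation of $\pi$, $\pi$ maps $\mathrm{RO}(\Add(\mu,B))$ into $\mathbb{D}_B$, so $Q_B:=\pi[\Add(\mu,B)]$ is dense in $\mathbb{D}_B$, has size $\le|B|^{<\mu}<\nu$, contains $S$, and is atomless by (ii); and as $\mathbb{D}_B$ is a complete subalgebra of $\mathbb{D}$ and $\mathbb{D}_0$ is dense in $\mathbb{D}$, $Q_B$ is a regular suborder of $\mathbb{D}_0$. So $\{Q_B\}$ witnesses $\nu$-cofinal layering, and $\mathbb{D}_0$ is $\mu^+$-c.c.\ (hence $\nu$-c.c.), being dense in the $\mu^+$-saturated $\p(\lambda)/\calJ$.

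The main obstacle I expect is the second paragraph: obtaining a \emph{complete} subalgebra of the Cohen algebra rather than merely a subalgebra. This is false for an arbitrary $\lambda$-complete ideal $\calJ$ and relies on combining the $\mu^+$-saturation of $\calJ$ with $\mu<\lambda$. A secondary difficulty, handled by the coordinate-closure of the third paragraph, is that the layered structure of $\Add(\mu,\theta)$ must be transported through $\pi$: the naive coordinate block need not give a regular suborder of $\mathbb{D}_0$, nor an atomless one, which is why one closes under (i) and (ii).
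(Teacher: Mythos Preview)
Your overall strategy matches the paper's: realize $\p(\lambda)/\calJ$ as a complete subalgebra of the Cohen algebra via a regular embedding, take $\pi[\Add(\mu,\theta)]$ as the dense set of size $\le\theta$, and argue that cofinal layering descends along $\pi$. There is one genuine slip in your setup, however: from an arbitrary partition $\langle A_\xi:\xi<\lambda\rangle$ of $\kappa$ into $\calI$-sets you only get $1\Vdash[h]_{\dot G}\geq\lambda$, not equality, and ``passing below a condition'' replaces the paper's $\calJ$ by a proper restriction, so you would no longer be proving the stated Claim. The paper sidesteps this by defining the embedding directly as $[X]_\calJ\mapsto\|\lambda\in j(X)\|$ and checking regularity from $\mu^+$-saturation exactly as you do; alternatively, patch representing functions for $\lambda$ along a maximal antichain of size $\le\mu$ to manufacture an $h$ with $1\Vdash[h]=\lambda$, and then your pushforward description is correct.

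On the layering step the two proofs diverge in style. The paper records a short heredity fact: if $\bbR$ is a regular atomless suborder of $\bbP$ then $\pi[\bbR]$ is a regular suborder of $\bbQ=\pi[\bbP]$, so one simply pulls a given $X\in[\bbQ]^{<\nu}$ back to $Y\subseteq\bbP$, enlarges to a regular $\bbR\supseteq Y$ using the layering of $\Add(\mu,\theta)$, and pushes forward. You instead build a coordinate set $B$ closed under (i) witnesses that $\pi(p)\in\mathrm{RO}(\Add(\mu,B))$ and (ii) atom-killing, and show $Q_B=\pi[\Add(\mu,B)]$ is dense in the complete subalgebra $\bbD_B=\bbD\cap\mathrm{RO}(\Add(\mu,B))$. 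Both are correct; yours is more explicit and makes the size and atomlessness bounds transparent, while the paper's is shorter. Finally, your atomlessness argument for $\bbD$ already suffices without the ``work below a condition'' fallback: an atom would give a $\lambda$-complete uniform ultrafilter on $\lambda$, hence $\lambda$ measurable and strongly inaccessible, contradicting $2^\mu\geq\lambda$ from Lemma~\ref{lem:gch}(\ref{muacc}).
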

\begin{proof}Let us first show that there is a regular embedding of $\p(\lambda)/\calJ$ into $\p(\kappa)/\calI$, defined by $[X]_{\calJ} \mapsto || \lambda \in j(X) ||$.  Since this preserves antichains and $\Add(\mu,\theta)$ is $\mu^+$-c.c., $\p(\lambda)/\calJ$ is $\mu^+$-c.c.  Thus if $\{ [X_\alpha]_{\calJ} : \alpha < \delta \}$ is a maximal antichain in $\p(\lambda)/\calJ$, then $[\bigcup_\alpha X_\alpha]_{\calJ} = [\lambda]_{\calJ}$, so $\{ ||\lambda \in j(A_\alpha)|| : \alpha < \delta \}$ is maximal in $\p(\kappa)/\calI$.  Therefore, the embedding is regular.

Note that $\p(\lambda)/\calJ$ is atomless, since forcing with it necessarily adds subsets of $\mu$.
The claim now follows from a more general fact about the heredity of cofinal layering.  Suppose $\calB_0$ is a complete subalgebra of a complete Boolean algebra $\calB_1$, and $\calB_1$ has a dense subset isomorphic to a $\nu$-c.c., $\nu$-cofinally layered poset $\bbP$.  Let $\pi : \bbP \to \calB_0$ be defined by $\pi(p) = \inf \{ b \in \calB_0 : b \geq p \}$.  Then $\pi$ is a projection onto a dense subset $\bbQ$ of $\calB_0$.  To show that $\bbQ$ is $\nu$-cofinally layered, let $\tau$ be a $\bbQ$-name that is forced to be a new bounded subset of $\nu$, and let $X \in \p_\nu(\bbQ)$ a predense set of size $<\nu$ deciding the elements of $\tau$.  Let $Y \in \p_\nu(\bbP)$ be such that $\pi[Y] = X$.  Let $\bbR \supseteq Y$ be a regular suborder of $\bbP$ of size $<\nu$.  Then $\pi[\bbR]$ is a regular suborder of $\bbQ$.
\end{proof}

The proof of Theorem \ref{gsgen} is based on the proof of Theorem 2.4 of~\cite{gs2}.  We will make use of the following fact:
\begin{thm}[Shelah]
Let $\kappa<\lambda$ be regular uncountable and let $S \subseteq \lambda^+ \cap \cof(\kappa)$ be stationary.  Then there is a stationary $S' \subseteq S$ and a sequence $\lan C_\alpha : \alpha \in S' \ra$ such that each $C_\alpha$ is a club in $\alpha$ of order-type $\kappa$, and whenever $\gamma$ is a common limit point of $C_\alpha$ and $C_\beta$, then $C_\alpha \cap \gamma = C_\beta \cap \gamma$.
\end{thm}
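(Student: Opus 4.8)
The plan is to obtain the statement from Shelah's theorem that the approachability ideal is everywhere stationary at points of cofinality $\kappa$, and then to thin a coherent sequence of clubs produced by it down to order type \emph{exactly} $\kappa$.

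Recall the characterization of the approachability ideal: for $\lambda$ regular and $T\subseteq\lambda^+$, one has $T\in I[\lambda^+]$ iff there are a club $E\subseteq\lambda^+$ and a sequence $\lan C_\beta:\beta<\lambda^+\ra$ such that each $C_\beta$ is a closed subset of $\beta$ with $\ot(C_\beta)<\lambda$, the sequence is \emph{coherent} ($C_\beta\cap\gamma=C_\gamma$ whenever $\gamma\in\Lim(C_\beta)$), and $\sup C_\alpha=\alpha$ for all $\alpha\in T\cap E$. Shelah's theorem gives, for $\lambda$ regular and $\kappa<\lambda$ regular, a nonempty stationary $S''\subseteq S$ with $S''\in I[\lambda^+]$, i.e.\ a coherent sequence $\lan C_\beta:\beta<\lambda^+\ra$ and club $E$ as above with $S''\subseteq S\cap E$. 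I would prove this along Shelah's lines: fix a large regular $\theta$ and a well-ordering $W$ of $H_\theta$, and build a continuous increasing, \emph{internally approachable} chain $\lan N_i:i<\lambda^+\ra$ of elementary submodels of $(H_\theta,\in,W,S)$ with $|N_i|=\lambda$, $\lambda+1\subseteq N_0$, and $\lan N_j:j\le i\ra\in N_{i+1}$; set $\delta_i=N_i\cap\lambda^+$. Then $E=\{\delta_i:i<\lambda^+\}$ is a club, $\cf(\delta_i)=\cf(i)$ by continuity, and every $N_j$ ($j<i$) lies in $N_i$; for the good indices $i$ (those with $\cf(i)=\kappa$) one uses the latter to extract from $N_i$ a closed cofinal subset of $\delta_i$ of order type below $\lambda$, all of whose proper initial segments are named by objects occurring earlier in the chain, and coherence follows from the $W$-canonicity of the names. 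Since $\{\delta_i:\cf(i)=\kappa\}=E\cap\cof(\kappa)$ meets $S$ in the stationary set $S\cap E$, one obtains $S''\subseteq S$. Arranging the construction so that a \emph{single} sequence $\lan C_\beta\ra$ is simultaneously coherent at all good points — something that would be impossible on all of $\lambda^+$ when $\square(\lambda^+)$ fails, hence genuinely exploits the word ``stationary'' — is the technical heart of the matter and is exactly the point Shelah isolated; this is the step I expect to be the main obstacle.

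Granting the coherent sequence $\lan C_\beta:\beta<\lambda^+\ra$ and club $E$ witnessing $S''\in I[\lambda^+]$ with $\emptyset\ne S''\subseteq S\cap E$, I would next fix the order type. For $\alpha\in S''$ with $\alpha>\lambda$, the set $C_\alpha$ is a club in $\alpha$, so $\ot(C_\alpha)$ is an ordinal below $\lambda$ of cofinality $\cf(\alpha)=\kappa$; since $\alpha\mapsto\ot(C_\alpha)$ is regressive on the stationary set $S''\cap(\lambda,\lambda^+)$, Fodor's lemma yields a stationary $S'\subseteq S$ and an ordinal $\zeta^*<\lambda$ with $\cf(\zeta^*)=\kappa$ and $\ot(C_\alpha)=\zeta^*$ for every $\alpha\in S'$. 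Fix once and for all a club $c^*\subseteq\zeta^*$ of order type $\kappa$.

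Finally, for $\alpha\in S'$ let $\pi_\alpha:\zeta^*\to C_\alpha$ be the increasing continuous enumeration of $C_\alpha$ and put $C'_\alpha:=\pi_\alpha[c^*]$. Since $\pi_\alpha$ is continuous and increasing and $c^*$ is club in $\zeta^*$, $C'_\alpha$ is a closed cofinal subset of $\alpha$ of order type $\kappa$. For coherence, suppose $\gamma\in\Lim(C'_\alpha)\cap\Lim(C'_{\alpha'})$ with $\alpha,\alpha'\in S'$. From $C'_\alpha\subseteq C_\alpha$ and $C'_{\alpha'}\subseteq C_{\alpha'}$ we get $\gamma\in\Lim(C_\alpha)\cap\Lim(C_{\alpha'})$, so by coherence of $\lan C_\beta\ra$, $C_\alpha\cap\gamma=C_\gamma=C_{\alpha'}\cap\gamma$; put $\eta=\ot(C_\gamma)$. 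As $C_\alpha$ is closed, $\gamma\in C_\alpha$, hence $\gamma=\pi_\alpha(\eta)$, and $\eta\in\Lim(c^*)$ because $\gamma\in\Lim(C'_\alpha)$ and $\pi_\alpha$ is an order isomorphism; moreover $\pi_\alpha\restriction\eta$ is exactly the enumeration $\pi_\gamma$ of $C_\alpha\cap\gamma=C_\gamma$. Therefore $C'_\alpha\cap\gamma=\pi_\alpha[c^*\cap\eta]=\pi_\gamma[c^*\cap\eta]$, which depends only on $\gamma$; the identical computation for $\alpha'$, using $\ot(C_{\alpha'}\cap\gamma)=\ot(C_\gamma)=\eta$, gives $C'_{\alpha'}\cap\gamma=\pi_\gamma[c^*\cap\eta]$ as well. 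Hence $C'_\alpha\cap\gamma=C'_{\alpha'}\cap\gamma$, and $S'$ together with $\lan C'_\alpha:\alpha\in S'\ra$ witnesses the theorem.
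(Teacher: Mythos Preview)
The paper does not prove this theorem; it is quoted without proof as a known result of Shelah and used as a black box in the subsequent argument. There is therefore no proof in the paper to compare your proposal against.

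On its own merits, your outline is the standard route, and the two finishing steps are carried out correctly: the Fodor argument fixing $\ot(C_\alpha)=\zeta^*$ on a stationary $S'$, and the thinning via a fixed club $c^*\subseteq\zeta^*$ pulled back through the enumerations $\pi_\alpha$, together with the verification that $C'_\alpha\cap\gamma=\pi_\gamma[c^*\cap\eta]$ depends only on $\gamma$, are clean and complete. One caution on the first step: the coherent-sequence description of $I[\lambda^+]$ you present as a ``recall'' is not the primitive definition but an equivalent reformulation whose proof is essentially the partial-square construction itself, so citing it as a ready-made fact risks circularity. Your internally-approachable-chain sketch is the right device to discharge this, and you correctly flag the passage from ``each good $\delta_i$ carries some club with early initial segments'' to ``a single globally coherent sequence works simultaneously'' as the genuine technical content of Shelah's argument.
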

A sequence as above is called a \emph{partial square sequence}.

\begin{proof}[Proof of Theorem \ref{gsgen}]

Suppose $\calI$ is a $\kappa$-complete ideal on $\kappa$ and $\p(\kappa)/\calI$ is forcing-equivalent to a poset $\bbP$ of size $\lambda<\min\{2^\kappa,\kappa^{+\omega} \}$ that is $\nu$-c.c.\ and $\nu$-cofinally layered for unboundedly many $\nu<\kappa$.
The case of $\lambda\leq\kappa$ is already explicitly ruled out by Theorem 1.2 of~\cite{MGSS}.  If $2^\kappa > \kappa^{+\omega}$, we can collapse $2^\kappa$ to $\lambda^+$ with $\lambda^+$-closed forcing without changing the properties of the ideal.  Thus it suffices to derive a contradiction from the assumption that $\kappa<\lambda < 2^\kappa < \kappa^{+\omega}$.  

\begin{claim}
\label{bound1}
$2^{<\kappa} \leq \lambda$.
\end{claim}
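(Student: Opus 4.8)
The plan is to force with $\p(\kappa)/\calI$ --- equivalently, with $\bbP$ --- and to read the bound off the generic ultrapower. Note first that the hypothesis forces $\calI$ to be $\theta$-saturated for a fixed $\theta<\kappa$: being $\nu$-c.c.\ for unboundedly many $\nu<\kappa$, $\bbP$ is $\theta$-c.c.\ for $\theta=\mathrm{sat}(\bbP)$, the least cardinal with no antichain of that size, and by Erd\H{o}s--Tarski this $\theta$ is a successor or weakly compact, hence $<\kappa$. Thus $\calI$ is a $\kappa$-complete, $\theta$-saturated, uniform ideal on $\kappa$; in particular it is precipitous, the generic ultrafilter $U$ is $\kappa$-complete, and the generic ultrapower $j\colon V\to M\subseteq V[G]$ has $\crt(j)=\kappa$ with $M$ closed under $\kappa$-sequences from $V[G]$. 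Consequently, for every $\rho<\kappa$ we get $\p(\rho)^V\subseteq\p(\rho)^M=\p(\rho)^{V[G]}$, so it suffices to prove $(2^\rho)^{V[G]}\le\lambda$ for each $\rho<\kappa$.

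The crux, and what I expect to be the main obstacle, is precisely this bound, and it must genuinely use the cofinal layering. Two naive routes are blocked: counting $\bbP$-names for subsets of $\rho$ only gives $(2^\rho)^{V[G]}\le\lambda^{<\kappa}$, which presupposes the conclusion; and the ``coset trick'' --- when $\p(\kappa)/\calI$ has an antichain of size $\rho$, split $\kappa$ into $\rho$-many $\calI$-positive pieces and take unions to embed $\p(\rho)$ into $\p(\kappa)/\calI$ --- only bounds $2^\rho$ by $|\p(\kappa)/\calI|$, which can exceed $|\bbP|=\lambda$. Instead, a new bounded subset of $\kappa$ in $V[G]$ already lives in $V[G\cap\bbR]$ for some atomless regular suborder $\bbR$ of $\bbP$ of size $<\kappa$ (using $\nu$-cofinal layering for a suitable $\nu$ with $\rho<\nu<\kappa$), and such $\bbR$ are $\theta$-c.c.\ and $\sigma$-complete. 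I would follow the analysis of Gitik and Shelah in \cite{gs2} (their Theorem~2.4 and the partial square sequence quoted above): one may assume at the outset $2^\kappa=\lambda^+$ (raise $2^\kappa$ to $\lambda^+$ by $\lambda^+$-closed forcing, changing neither $\calI$'s relevant structure nor $2^{<\kappa}$), and then the Maharam-type behaviour forced on $\p(\kappa)/\calI$ by these small atomless $\theta$-c.c.\ regular suborders is incompatible with $|\bbP|<2^\kappa$ unless $2^{<\kappa}$ is already small --- which, via the classical structure theory of $\kappa$-complete, $<\kappa$-saturated ideals (restrict $\calI$ to a positive set minimizing its saturation: this forces either a $\kappa$-complete prime restriction, so $\kappa$ is measurable and a strong limit with $2^{<\kappa}=\kappa<\lambda$, or a $\kappa$-complete ccc restriction, a real-valued-measurable situation in which $2^\rho=2^{\aleph_0}\le|\bbP|=\lambda$ for all $\rho<\kappa$), yields $2^{<\kappa}\le\lambda$ in every case. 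Assembling this dichotomy into a single clean argument is the main work, and the point at which the techniques of \cite{gs2} are essential. Once $2^{<\kappa}\le\lambda$ is secured, it feeds directly into the remainder of the proof of Theorem~\ref{gsgen}.
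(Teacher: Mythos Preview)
Your proposal has a genuine gap at the key step. The ``dichotomy'' you invoke is not valid: a $\kappa$-complete, $\theta$-saturated ideal on $\kappa$ with $\theta<\kappa$ need not restrict on any positive set to either a prime ideal or an $\aleph_1$-saturated one. The saturation of $\bbP$ below every condition may well be $\mu^+$ for some fixed uncountable $\mu<\kappa$ --- indeed this is exactly what happens when $\bbP=\Add(\mu,\theta)$ with $\mu$ uncountable, the very situation motivating Theorem~\ref{gsgen}. Even in your ccc branch, the inequality $2^{\aleph_0}\le|\bbP|$ is asserted without justification. And the sentence about ``Maharam-type behaviour\ldots incompatible with $|\bbP|<2^\kappa$ unless $2^{<\kappa}$ is already small'' is a paraphrase of the goal, not an argument toward it. (Incidentally, you write ``raise $2^\kappa$ to $\lambda^+$'', but $2^\kappa>\lambda$ by hypothesis; the paper \emph{collapses} $2^\kappa$ if necessary, and that reduction is done before Claim~\ref{bound1} is stated.)

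The paper's argument is concrete and substantially different from anything you sketch. Assuming $\theta=2^\mu>\lambda$ for some $\mu<\kappa$, enumerate $\p(\mu)=\{x_\alpha:\alpha<\theta\}$. Using the $\theta$-c.c.\ one finds a club $C\subseteq\theta$ on which $\zeta_\alpha:=\sup j[\alpha]$ is decided by the top condition; using cofinal layering one finds a stationary set of $\alpha$ for which the name $\tau_{\zeta_\alpha}$ for $y_{\zeta_\alpha}\in j(\p(\mu))$ lies in a fixed small regular suborder $\bbQ_{\beta^*}$. Shelah's partial square theorem supplies a coherent club sequence on a stationary $T\subseteq\theta\cap\cof(\kappa)$. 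After collapsing $|\bbQ_{\beta^*}|$ to $\omega$ (which leaves the relevant structure of $\calI$ intact), the suborder becomes $\Add(\omega,1)$; one then defines, for $\alpha\in T$ and $\xi\leq\kappa$, the set $A_\alpha^\xi$ of reals $r_\beta$ such that $\tau_{\zeta_\alpha}^{r_\beta}\in\{x_\gamma:\gamma\in C_\alpha\restriction\xi\}$, and shows these cannot be covered by fewer than $\kappa$ meager sets (the Cohen real avoids meager sets coded in $V$ yet is forced into $j(A_\alpha^\kappa)$). A Fodor argument on the square sequence freezes the initial segment $C_\alpha\restriction\xi^*$ on a stationary $T'$, and a reflection from an $\alpha^*\in j(T')\setminus j[\theta]$ --- using layering once more to capture the relevant name in a small suborder --- produces two distinct $\Add(\omega,1)$-names that agree on a nonmeager set of reals, a contradiction. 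The cofinal layering hypothesis does real work at two separate places in this argument, and nothing in your outline substitutes for it.
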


\begin{proof}
Otherwise, there is $\mu<\kappa$ and enumeration $\lan x_\alpha : \alpha< \theta \ra$ of $\p(\mu)$, where $\theta>\lambda$.  
 It is easy to show by induction that for each $n<\omega$, $\p_\kappa(\kappa^{+n})$ has a cofinal subset of size $\kappa^{+n}$.  Thus there is a collection of atomless regular suborders of $\bbP$, $\lan \bbQ_\alpha : \alpha < \lambda \ra$, which is cofinal in $\p_\kappa(\bbP)$.

Let $j : V \to M \subseteq V[G]$ be a generic ultrapower embedding, where $G \subseteq \p(\kappa)/\calI$ is generic.  Let us assume that the top condition decides the value of $j(\theta)$ as $\theta'$.  Since $\calI$ is an ideal on $\kappa$, $j[\theta]$ is cofinal in $\theta'$.  For $\alpha<\theta$, let $\zeta_\alpha$ be the minimum possible value such that some condition forces $\zeta_\alpha =\sup j[\alpha]$, and let $\eta_\alpha$ be the supremum of all possible values.  By the $\theta$-c.c., each $\eta_\alpha$ is below $\theta'$.  Note that for $\alpha<\beta$, $\zeta_\alpha<\zeta_\beta$ and $\eta_{\alpha} \leq \eta_{\beta}$.  Furthermore, both sequences $\zeta_\alpha$ and $\eta_\alpha$ are continuous.  Thus there is a club $C \subseteq \theta$ in $V$ such that for all $\alpha \in C$, it is forced that $\sup j[\alpha] = \zeta_\alpha= \eta_\alpha$.


  Let $\lan y_\alpha : \alpha < \theta' \ra = j(\lan x_\alpha : \alpha < \theta \ra)$.  For each $\alpha<j(\theta)$ there is a $\bbP$-name $\tau_\alpha$ for $y_\alpha$ of size $<\kappa$.  There is a stationary $S \subseteq \theta' \cap \cof(\kappa)$ in $V$ and a $\beta^*<\lambda$ such that each $\tau_\alpha$ is a $\bbQ_{\beta^*}$-name for $\alpha \in S$.  Let $S' = \{ \alpha \in C : \zeta_\alpha \in S\}$.  By Shelah's Theorem, there is a stationary $T \subseteq S'$ and a partial square sequence $\lan C_\alpha : \alpha \in T \ra$.  For $\xi\leq\kappa$, let $C_\alpha \restriction \xi$ denote $\{ \beta \in C_\alpha : \ot(C_\alpha \cap \beta) < \xi \}$.
  
  Let us add a generic for $\Col(\omega,|\bbQ_{\beta^*}|)$ over $V$.  In this extension, the generated ideal $\bar{\calI}$ is still $\kappa$-complete and forcing-equivalent to the same poset $\bbP$, since $\p(\kappa)^V/\calI$ is dense in $\p(\kappa)/ \bar{\calI}$.  
  If $\nu > | \bbQ_{\beta^*}|$ is such that $\bbP$ is $\nu$-c.c.\ and cofinally many $\bbQ \in \p_\nu(\bbP)$ are regular in $\bbP$, then this remains true after the collapse.
But now, $\bbQ_{\beta^*}$ is equivalent to $\Add(\omega,1)$.  For notational simplicity, let us assume this situation already occurred in $V$.  By the closure of $M$, let $\lan r_\alpha : \alpha < \kappa \ra$ be a sequence that is forced to represent the Cohen real $r$ coding the generic for $\bbQ_{\beta^*}$.  If $\beta = [\id]_G$, then $r = j(\lan r_\alpha : \alpha < \kappa \ra)(\beta)$.
  
We may assume that for each $\alpha \in T$, $\tau_{\zeta_\alpha}$ is an $\Add(\omega,1)$-name.  For $\alpha\in T$ and $\xi\leq\kappa$, let 
$$A_\alpha^\xi = \{ r_\beta : \beta<\xi \wedge (\exists \gamma \in C_\alpha \restriction \xi)\tau_{\zeta_\alpha}^{r_\beta} = x_\gamma \}.$$
For each $\alpha\in T$, $A_\alpha^\kappa$ is a nonmeager set of reals, since it is forced that $\zeta_\alpha = \sup j[\alpha] \in j(C_\alpha)$, and $\tau_{\zeta_\alpha}^r = y_{\zeta_\alpha}$.  The Cohen real $r$ is forced to be $r_\beta$ for some $\beta<j(\kappa)$, which is in $j(A_\alpha^\kappa)$.  If $A_\alpha^\kappa$ were meager in $V$, then by elementarity, a meager set coded in $V$ would cover $j(A_\alpha^\kappa)$, which therefore could not contain $r$.

Furthermore, after forcing with $\bbP$, each $A_\alpha^\kappa$ cannot even be covered by fewer than $\kappa$ meager sets.  For otherwise, in $V[G]$, we could cover $A_\alpha^\kappa$ by $\gamma$-many closed nowhere dense sets $\lan B_\beta : \beta < \gamma \ra$, where $\gamma<\kappa$.  Since each $B_\beta$ is coded by a real, there is some $\delta<\lambda$ such that each $B_\beta$ is in $V[G \cap \bbQ_\delta]$.  For each $q \in \bbQ_\delta$ and $\beta<\gamma$, let $B_{\beta,q} = \{ x \in \bbR : q \Vdash x \in \dot B_\beta \}$.  Each $B_{\beta,q}$ is a closed nowhere dense set of reals in $V$, and $\bigcup_{\beta,q} B_{\beta,q} \supseteq A_\alpha^\kappa$.  Thus by elementarity, it is forced that $j(\bigcup_{\beta,q} B_{\beta,q}) = \bigcup_{\beta,q} j(B_{\beta,q}) \supseteq j(A_\alpha^\kappa)$ (using the fact that $\gamma,|\bbQ_\delta| < \kappa$).  But the Cohen real $r$ is not in any $j(B_{\beta,q})$, since each of these is a closed nowhere dense set coded in $V$.  This contradicts the fact that $r \in j(A_\alpha^\kappa)$.

Reflecting this gives that for each $\alpha \in T$, there are unboundedly many $\xi<\kappa$ such that
 $A^\xi_\alpha$ cannot be covered by fewer than $\xi$-many meager sets.
 By the elementarity of $j$ applied to such $\xi<\kappa$, we have that for all $\alpha \in T$, it holds in $M$ that $j(A_\alpha^\xi)$ cannot be covered by $<\xi$-many meager sets.  
 The same holds in $V[G]$, by the $\kappa$-closure of $M$.
Let $\nu<\kappa$ be such that $\bbP$ is $\nu$-c.c.\ and cofinally many $\bbQ \in \p_\nu(\bbP)$ are regular in $\bbP$.  
By Fodor's Lemma, there is a stationary $T' \subseteq T$ and a $\xi^*<\kappa$ and $\rho^*<\theta$ such that $\xi^*>\nu$, and for all $\alpha \in T'$, $\sup(C_\alpha \restriction \xi^*) = \rho^*$.
Since $\lan C_\alpha : \alpha \in T \ra$ is a partial square sequence, there is a fixed set $D$ such that $D = C_\alpha \restriction \xi^*$ for all $\alpha \in T'$.

In $M$, let $\lan \sigma_\alpha : \alpha \in j(T') \ra = j(\lan \tau_{\zeta_\alpha} : \alpha\in T' \ra)$.  
Let $\alpha^* \in j(T') \setminus j[\theta]$.
There is some regular $\bbQ \subseteq \bbP$ of size $<\nu$ such that $\sigma_{\alpha^*} \in V[G \cap \bbQ]$.  
 Let $\dot\sigma$ be a $\bbQ$-name for $\sigma_{\alpha^*}$.
For each $\beta<\xi^*$, there is some $q \in \bbQ \cap G$ such that $q \Vdash \dot\sigma^{r_\beta} = f_q(r_\beta)$, where $f_q(r_\beta) = x_\gamma$ for some $\gamma \in D$.  
Each $f_q$ is a partial function in $V$, and $M \models \sigma_{\alpha^*}^s = f_{q}(s)$ for all $q \in G$ and $s \in \dom(f_{q})$.
There is some $q^* \in \bbQ \cap G$ such that $\dom(f_{q^*})$ is nonmeager in $V[G]$.  
Since $\alpha^* \notin j[T']$ and the relevant objects are smaller than $\kappa$, we can reflect twice to get some $\alpha_0<\alpha_1$ such that $\tau_{\zeta_{\alpha_0}}$,$\tau_{\zeta_{\alpha_1}}$ have the same properties as $\sigma_{\alpha^*}$.  Namely, for all $s \in \dom(f_{q^*})$, $\tau_{\zeta_{\alpha_i}}^s = f_{q^*}(s)$.
But now we reach a contradiction.  Since $\tau_{\zeta_{\alpha_0}}$,$\tau_{\zeta_{\alpha_1}}$ are $\Add(\omega,1)$-names for distinct objects, the set of reals $s$ such that $\tau_{\zeta_{\alpha_0}}^s = \tau_{\zeta_{\alpha_1}}^s$ is meager.
\end{proof}

\begin{claim}
\label{bound2}
For all $\mu<\kappa$, $\Vdash_{\p(\kappa)/\calI} j(2^\mu) < \lambda^+$.
\end{claim}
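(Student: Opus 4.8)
The plan is to bound $j(2^\mu)$ above by $(2^\mu)^{V[G]}$ and then bound the latter by $\lambda$ via a nice-name count, invoking Claim \ref{bound1} and the regularity of $\kappa$. Since the assertion concerns the forcing $\p(\kappa)/\calI$, which is forcing-equivalent to $\bbP$, I work throughout with $\bbP$, writing $G\subseteq\bbP$ for the generic.

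First, since $\mu<\kappa=\crt(j)$ we have $j(\mu)=\mu$, hence $j(2^\mu)=(2^\mu)^M$. As $M\subseteq V[G]$, the set $\p(\mu)^M$ has $V[G]$-cardinality at most $(2^\mu)^{V[G]}$, and since $(2^\mu)^M$ is, in $M$, in bijection with $\p(\mu)^M$, we get $|j(2^\mu)|^{V[G]}\le (2^\mu)^{V[G]}$. Because $|\bbP|=\lambda$, $\bbP$ has the $\lambda^+$-c.c., so $(\lambda^+)^V=(\lambda^+)^{V[G]}$; thus it suffices to show $(2^\mu)^{V[G]}\le\lambda$, since that yields $\Vdash j(2^\mu)<\lambda^+$.

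Next I would count nice names. The hypothesis that $\bbP$ is $\nu$-c.c.\ for unboundedly many $\nu<\kappa$ gives in particular that $\bbP$ is $\kappa$-c.c., so every subset of $\mu$ in $V[G]$ is realized by a nice $\bbP$-name, i.e.\ by a sequence $\langle A_\alpha:\alpha<\mu\rangle$ of antichains of $\bbP$ each of size $<\kappa$; hence $(2^\mu)^{V[G]}\le(\lambda^{<\kappa})^\mu$. To evaluate this, recall $\kappa$ is regular, $2^{<\kappa}\le\lambda$ by Claim \ref{bound1}, and $\lambda=\kappa^{+n}$ for some $n\ge1$ (as $\kappa<\lambda<\kappa^{+\omega}$). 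For $\rho<\kappa$, every function $\rho\to\kappa$ is bounded, so $\kappa^\rho=\kappa\cdot\sup_{\alpha<\kappa}|\alpha|^\rho\le\kappa\cdot 2^{<\kappa}\le\lambda$; iterating Hausdorff's formula then gives $\lambda^\rho=\kappa^{+n}\cdot\kappa^\rho=\lambda$. Consequently $\lambda^{<\kappa}=\kappa\cdot\sup_{\rho<\kappa}\lambda^\rho=\lambda$ and $(\lambda^{<\kappa})^\mu=\lambda^\mu=\lambda$, so $(2^\mu)^{V[G]}\le\lambda$, completing the argument.

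I do not expect a genuine obstacle: the only care needed is bookkeeping of the model in which each cardinal is computed (that $(2^\mu)^M\le(2^\mu)^{V[G]}$, and that $\lambda^+$ agrees in $V$ and $V[G]$), together with the observation that it is the $\kappa$-c.c.\ of $\bbP$ — forcing antichains to have size $<\kappa$, whence $\lambda^{<\kappa}=\lambda$ — that, combined with Claim \ref{bound1}, pushes the count down to $\lambda$.
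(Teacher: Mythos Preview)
Your argument is correct. You bound $j(2^\mu)=(2^\mu)^M$ by $(2^\mu)^{V[G]}$ and then count nice names using only the $\kappa$-c.c.\ of $\bbP$ together with Claim~\ref{bound1} and the reduction $\lambda=\kappa^{+n}$ (so that Hausdorff gives $\lambda^{<\kappa}=\lambda$). The paper proceeds differently: it assumes $j(2^\mu)\geq\lambda^+$, lists $\lambda^+$-many names $\tau_\alpha$ for distinct elements of $j(\p(\mu))$, and uses the $\kappa$-cofinal layering of $\bbP$ to pigeonhole $\lambda^+$ of them into a single regular suborder $\bbQ_{\beta^*}$ of size $<\kappa$; since each such name is then coded by a subset of $\nu=\max\{\mu,|\bbQ_{\beta^*}|\}<\kappa$, this gives $2^\nu>\lambda$, contradicting Claim~\ref{bound1}. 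So the paper leans on the layering hypothesis and avoids any cardinal-arithmetic computation with $\lambda<\kappa^{+\omega}$, whereas your route leans on $\lambda<\kappa^{+\omega}$ (via Hausdorff) and avoids the layering. Both are short; yours is perhaps more elementary in that it uses only the $\kappa$-c.c., while the paper's argument is more uniform in that it would go through for any $\lambda$, not just $\lambda<\kappa^{+\omega}$.
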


\begin{proof}
Assume $[X]_{\calI} \Vdash j(2^\mu) \geq \lambda^+$.  
Let $\lan \tau_\alpha : \alpha <\lambda^+ \ra$ be an enumeration of names of distinct elements of $j(\p(\mu))$.  Let $\lan \bbQ_\alpha : \alpha < \lambda \ra$ be a collection of regular suborders of $\bbQ$ that is cofinal in $\p_\kappa(\bbP)$.
There is a set $A \subseteq \lambda^+$ of size $\lambda^+$ and a $\beta^*<\lambda$ such that $\tau_\alpha$ is a $\bbQ_{\beta^*}$-name for $\alpha \in A$.  Each $\tau_\alpha$ is coded by a subset of $\nu=\max\{\mu,|\bbQ_{\beta^*}|\}$.  But this means that $2^\nu > \lambda$, contrary to Claim \ref{bound1}.
\end{proof}

To finish the proof of Theorem \ref{gsgen}, let $\theta = 2^\kappa$.  Since $\p(\kappa)/I$ adds bounded subsets of $\kappa$, there is some $\mu<\kappa$ such that $2^\mu \geq \kappa$.  By Claim \ref{bound2}, it is forced that $j(\kappa) <\lambda^+$.  Since all cardinals above $\kappa$ are preserved, and there are only finitely many cardinals of $M$ between $j(\kappa)$ and $j(\theta)$, and $j$ is continuous at regular cardinals above $\kappa$, we must have that $j(\eta) = \eta$ for cardinals $\eta$, $\lambda^+\leq\eta\leq\theta$.  Since $\p(\kappa)^V \subseteq M$, it must be the case that $(2^\kappa)^M \geq \theta$.  Therefore,
$$M \models \theta \leq 2^\kappa \leq 2^{j(\kappa)} = j(2^\kappa) = \theta.$$
Thus in $M$, there is some $\mu<j(\kappa)$ such that $2^\mu = 2^{j(\kappa)}$.  Reflecting this to $V$ and using Claim \ref{bound1}, we get $\lambda = 2^{<\kappa} = 2^\kappa.$  This contradiction completes the proof.
\end{proof}

Theorem \ref{gsgen} fails to generalize to uniform $\kappa$-complete ideals on $\theta$, where $\theta>\kappa$.  For example, suppose $\kappa$ is a huge cardinal.  This is witnessed by a normal $\kappa$-complete ultrafilter on $[\theta]^\kappa$ for some inaccessible $\theta$.  If $j : V \to M$ is the associated embedding, then $j(\kappa) = \theta$.  Let $\mu<\kappa$ be regular, and force with $\Add(\mu,\kappa)$.  By Theorem \ref{foreman}, in the extension, there is a uniform $\kappa$-complete ideal $\calI$ on $\theta$ such that $\p(\theta)/\calI$ has a dense set isomorphic to $\Add(\mu,\theta)$.  Of course, $\theta<2^\theta$, and we may assume that $\theta^{+\omega}$ is a strong limit.

This example shows that Lemma \ref{converse} is only a partial converse to Lemma \ref{lem:ideal}.  By Proposition \ref{ineq2}, there is no $(\mu,\theta)$-mif of size $\theta$, assuming that $\theta^{<\mu} = \theta$.  The explanation is that, in Lemma \ref{converse}, we made essential use of the assumption that the number of Cohen sets added by forcing with the ideal is at least the size of a cofinal subset of the ideal.

We can also bring this situation down to uniform $\kappa$-complete ideals on $\kappa^{+n}$ for finite $n$.  The idea for the construction comes from Magidor~\cite{MM}.  Let us say that a cardinal $\kappa$ is \emph{$+n$-huge} if there is an elementary embedding $j : V \to M$ such that $M$ is $j(\kappa)^{+n}$-closed.  This property is witnessed by a normal $\kappa$-complete ultrafilter on the set $\{ z \subseteq \theta^{+n} : (\forall m \leq n) \ot(z \cap \theta^{+m}) = \kappa^{+m} \}$, where $j(\kappa) = \theta$.

\begin{prop}
Assume $\ZFC$ is consistent with a 2-huge cardinal, and let $n<m$ and $k$ be natural numbers.  Then there is a model of $\ZFC$ in which for some regular cardinal $\lambda>\aleph_k=2^{<\aleph_k}$, there is a uniform $\lambda$-complete ideal $\calI$ on $\lambda^{+n+m+2}$ such that $\p(\lambda^{+n+m+2})/\calI$ has a dense set isomorphic to $\Add(\aleph_k,\lambda^{+m+2})$. 
\end{prop}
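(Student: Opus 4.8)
The plan is to realize $\calI$ as the ideal generated by the dual of a huge-type ultrafilter in a suitable forcing extension, and to read off its quotient algebra from Foreman's Duality Theorem (Theorem~\ref{foreman}); the contribution of Magidor's construction~\cite{MM} is a ``telescoping'' collapse that repositions the critical cardinals while permitting the huge embedding to be lifted. Note that this will \emph{not} yield an $(\aleph_k,\lambda^{+n+m+2})$-mif: condition~(\ref{cofinalJ}) of Lemma~\ref{converse} must fail, as the quotient $\Add(\aleph_k,\lambda^{+m+2})$ adds only $\lambda^{+m+2}<\lambda^{+n+m+2}$ Cohen subsets of $\aleph_k$ while $\calI$, being uniform on $\lambda^{+n+m+2}$, has no $\subseteq$-cofinal subset of size $\le\lambda^{+m+2}$.

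First I would prepare the ground model: starting from a $2$-huge $\kappa$, force by a standard reverse-Easton iteration to a model of $\GCH$ -- in particular with $\GCH$ below $\kappa$, so that $2^{<\aleph_k}=\aleph_k$ -- in which $\kappa$ is still $2$-huge. Fix $j:V\to M$ witnessing this, with $\crt j=\kappa$, $\theta:=j(\kappa)$, $\kappa_2:=j(\theta)=j^2(\kappa)$ and $M^{\kappa_2}\subseteq M$. Writing $N=n+m+2$, we then have $j(\kappa^{+i})=\theta^{+i}$ for $i\le N$, $M$ computes $\p(\theta^{+N})$ and the cardinals $\le\kappa_2$ correctly, and -- crucially -- since $\kappa\le\cf(\theta^{+n})=\theta^{+n}\le\kappa_2$, the map $j$ is \emph{discontinuous} at $\theta^{+n}$. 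This is exactly where $2$-hugeness (rather than mere hugeness) is needed: a huge embedding is continuous at $\theta^{+n}$ for $n\ge 1$ and hence carries no uniform $\kappa$-complete ultrafilter there, whereas the larger support of a $2$-huge measure does.

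Next, following Magidor~\cite{MM}, I would arrange the preparation so that afterwards there is a $\kappa^{+(m+1)}$-closed telescoping forcing $\bbC$ collapsing the interval $(\kappa^{+(m+1)},\theta)$ onto $\kappa^{+(m+1)}$ -- so that in the extension $V_1$ one has $\theta=\kappa^{+(m+2)}$, $\theta^{+n}=\kappa^{+(n+m+2)}$, $\kappa$ inaccessible, $\GCH$ still below $\kappa$, and all cardinals $\ge\theta$ preserved -- through which $j$ lifts to $j_1:V_1\to M_1$ with $M_1$ still sufficiently closed; here one uses $\GCH$ in $M$ to assemble, in $V_1$, a generic for the tail of $j(\bbP_\kappa * \bbC)$, together with master-condition arguments for the lower factors. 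In $V_1$, using the discontinuity of $j_1$ at $\kappa^{+(n+m+2)}$ and the closure of $M_1$, let $\calU$ be the uniform, $\kappa$-complete ultrafilter on $\kappa^{+(n+m+2)}$ derived from $j_1$, chosen -- via the seed furnished by the telescoping structure -- so that its ultrapower is tight, i.e. $j_\calU(\kappa)=\theta=\kappa^{+(m+2)}$; its dual $\calU^*$ is then a uniform, $\kappa$-complete, precipitous ideal on $\kappa^{+(n+m+2)}$. Now force with $\bbQ:=\Add(\aleph_k,\kappa)$ over $V_1$. Since $2^{<\aleph_k}=\aleph_k$, $\bbQ$ is $\aleph_{k+1}$-c.c. (hence $\kappa$-c.c.) and preserves all cardinals, so Theorem~\ref{foreman} applies to $\calU^*$ and $\bbQ$: because $\p(\kappa^{+(n+m+2)})/\calU^*$ is trivial ($\calU^*$ being maximal), the Duality Theorem gives that in $V_1[G_\bbQ]$ the quotient of the generated ideal $\calI:=\overline{\calU^*}$ is dense in $j_\calU(\bbQ)/G_\bbQ$. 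As $j_\calU$ fixes every condition of $\bbQ$ (they have size $<\aleph_k<\kappa$), $j_\calU(\bbQ)=\Add(\aleph_k,\theta)$ with $\bbQ$ embedded as its first $\kappa$ coordinates, whence
$$\p(\kappa^{+(n+m+2)})/\calI\ \cong\ \Add(\aleph_k,[\kappa,\theta))\ \cong\ \Add(\aleph_k,\theta)\ =\ \Add(\aleph_k,\kappa^{+(m+2)})$$
in $V_1[G_\bbQ]$. Setting $\lambda:=\kappa$, the model $V_1[G_\bbQ]$ witnesses the proposition: $\lambda$ is regular with $\lambda>\aleph_k=2^{<\aleph_k}$; $\calI$ is uniform (it extends the bounded ideal, as $\calU$ does and $\bbQ$ adds no bounded subsets of $\kappa^{+(n+m+2)}$) and $\lambda$-complete (by the remark preceding Theorem~\ref{foreman}, since $\calU^*$ is $\kappa$-complete and $\bbQ$ is $\kappa$-c.c.) on $\lambda^{+n+m+2}$; and $\p(\lambda^{+n+m+2})/\calI$ has a dense set isomorphic to $\Add(\aleph_k,\lambda^{+m+2})$.

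The main obstacle is the third paragraph: the two intertwined Magidor-style points, namely (i) lifting the $2$-huge embedding through the telescoping collapse $\bbC$, which -- being a Lévy-type collapse -- admits no naive master condition and so must be absorbed into the preparation $\bbP_\kappa$ so that the residue of $j(\bbP_\kappa * \bbC)$ is highly closed over $M$ and a generic is constructible using $\GCH$; and (ii) choosing the derived ultrafilter $\calU$ on $\kappa^{+(n+m+2)}$ so that $j_\calU(\kappa)$ is \emph{exactly} $\theta=\kappa^{+(m+2)}$ -- this tightness is precisely what forces the Cohen quotient to have $\lambda^{+m+2}$, rather than more, coordinates. The remaining points (the $\kappa$-c.c. and cardinal preservation of $\bbQ$, the persistence of $\GCH$ below $\kappa$, the uniformity and $\lambda$-completeness of $\calI$, and the role of the hypothesis $n<m$ in the bookkeeping for $\bbC$) are routine given Theorem~\ref{foreman}.
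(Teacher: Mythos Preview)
Your approach differs substantially from the paper's, and the step you flag as an ``obstacle'' is in fact the heart of the argument; it is not resolved by what you write.

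The paper does not set $\lambda=\kappa$. Rather, it takes $\lambda$ to be a supercompact cardinal \emph{below} a $+n$-huge $\kappa$ (such a pair exists below a $2$-huge), Laver-prepares $\lambda$, and then uses Kunen's transfer forcing to make $\kappa=\lambda^+$ while collapsing $\theta=j(\kappa)$ to $\kappa^{+m+1}=\lambda^{+m+2}$, lifting the $+n$-huge embedding throughout. The resulting normal ideal $\calI$ lives on the domain $Z$ of the $+n$-huge measure --- a subset of $\p(\theta^{+n})$ of size $\theta^{+n}$, not the ordinal $\theta^{+n}$ itself --- and its quotient is $\theta$-c.c.\ of size $\theta$. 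The normality of the measure on $Z$ is precisely what drives a direct counting argument showing at most $\theta$ functions $Z\to\kappa$ are distinct modulo $\calI$, and a similar argument produces a $\theta^+$-increasing sequence of functions $Z\to\kappa^+$. The essential move is then to invoke the preserved strong compactness of $\lambda$ to extend $\calI^*$ to a $\lambda$-complete \emph{ultrafilter} $\calW$ on $Z$; the two bounds transfer, pinning $|i(\kappa)|$ at $\theta$. Only then does forcing with $\Add(\aleph_k,\kappa)$ and Foreman's Duality produce the desired quotient.

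Your sketch instead tries to derive a $\kappa$-complete uniform ultrafilter $\calU$ on the ordinal $\theta^{+n}$ directly from the lifted $2$-huge embedding at some seed, asserting $j_\calU(\kappa)=\theta$. The factor map into $M_1$ indeed gives $j_\calU(\kappa)\le\theta$, but nothing you write forces $|j_\calU(\kappa)|\ge\theta$: for an arbitrary seed above $\sup j_1[\theta^{+n}]$ there is no reason every $\alpha<\theta$ is of the form $j_1(f)(s)$ for some $f:\theta^{+n}\to\kappa$, so $j_\calU(\kappa)$ could land anywhere in $(\kappa,\theta]$. Magidor's telescoping seeds are designed for ultrafilters on $\theta$, not on $\theta^{+n}$, and the structure that makes the counting work --- functions $z\mapsto\ot(z\cap\alpha)$ on a set $Z$ of subsets --- is exactly what is lost when you project to an ordinal. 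Without the supercompact below $\kappa$ you also have no device for converting a lifted ideal into an ultrafilter while retaining control over the size of the critical image. This is a genuine gap, not a routine detail.
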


\begin{proof}
We can assume that there is a model of $\ZFC+\GCH$ with a cardinal $\kappa$ that is $+n$-huge, and a supercompact $\lambda<\kappa$.  Let $j : V \to M$ be an embedding witnessing $\kappa$ is $+n$-huge with $j(\kappa)=\theta$.  Assume $j$ is derived from a normal ultrafilter $\calU$ on the set $Z \subseteq \p(\theta^{+n})$ as above, and note that $|Z|=\theta^{+n}$.  Use Laver's forcing to make $\lambda$ indestructibly supercompact.  Since this forcing has size $\lambda$, $\kappa$ remains $+n$-huge with the same target.

By Kunen~\cite{Kunen2}, there is a $\kappa$-c.c., $\lambda$-directed-closed forcing $\bbP \subseteq V_\kappa$ that turns $\kappa$ into $\lambda^+$, and has the property that there is a regular embedding from $\bbP * \dot{\Col}(\kappa^{+m},{<}\theta)$ into $j(\bbP)$ that is the identity on $\bbP$.  Let $G * H \subseteq \bbP * \dot{\Col}(\kappa^{+m},{<}\theta)$ be generic.  Force further to obtain $G' \subseteq j(\bbP)$ containing the image of $G * H$.  Since $j(\bbP)$ is $\theta$-c.c., $M[G']$ is $\theta^{+n}$-closed in $V[G']$.  $j[H]$ has a lower bound $q \in\Col(\theta^{+m},{<}j(\theta))^{M[G']}$.  Using $\GCH$, we find that $|j(\theta)| = \theta^{+n+1}$.  Using this and the closure of $M[G']$, we can work in $V[G']$ to construct a filter $H'$ containing $q$ that is $\Col(\theta^{+m},{<}j(\theta))$-generic over $M[G']$, allowing an extension of the embedding to $j : V[G*H] \to M[G' * H']$.  As in the argument for Theorem \ref{indest}, there is in $V[G*H]$ a normal ideal $\calI$ on $Z$ extending the dual of $\calU$ such that $\p(Z)/\calI$ is forcing-equivalent to the quotient $j(\bbP)/(G*H)$.

$\p(Z)/\calI$ is a complete Boolean algebra that is $\theta$-c.c.\ and of size $\theta$.  We claim that there are at most $\theta$-many functions $f : Z \to \lambda$ that are distinct modulo $\calI$.  Since $\calI$ is $\lambda^+$-complete, the equivalence class of such a function is determined by a partition of $Z$ into $\leq\lambda$-many $\calI$-positive pieces.  This corresponds to a maximal antichain of size $\leq\lambda$ in $\p(Z)/\calI$.  Since $\theta^\lambda = \theta$, there are only $\theta$-many possibilities.  
Furthermore, there are at most $\theta$-many functions $f : Z \to \kappa$ that are distinct modulo $\calI$.  To show this, for each $\alpha<\theta$, let $f_\alpha : Z \to \kappa$ be such that $f_\alpha(z) = \beta$, where $\alpha$ is the $\beta^{th}$ element of $z$.  (By normality, almost all $z$ have $\alpha \in z \cap \theta$ and $\ot(z \cap \theta)=\kappa$.)  If $g : A \to \kappa$ is any function defined on an $\calI$-positive set $A$, then by normality, $g$ is dominated by some $f_\alpha$ on an $\calI$-positive $B \subseteq A$.  By the $\theta$-c.c., every $g : Z \to \kappa$ is dominated by some $f_\alpha$ modulo $\calI$.  Since the set of predecessors of any $f_\alpha$ has cardinality at most $\theta$, there are in total at most $\theta$-many $g : Z \to \kappa$ that are distinct modulo $\calI$.  Finally, note that by a similar argument, there is a mod-$\calI$-increasing sequence of length $\theta^+$ of functions from $Z$ to $\kappa^+$.

Now, $\lambda$ is still strongly compact in $V' = V[G*H]$.  Thus we can extend the dual of $\calI$ to a $\lambda$-complete ultrafilter $\calW$ on $Z$.  If $i : V' \to N$ is the ultrapower embedding by $\calW$, then $\crt(i)=\lambda$, and there are at most $\theta$-many functions from $Z$ to $\kappa$ that are distinct modulo $\calW$.  Thus $i(\kappa) < \theta^+$.  Since there are, modulo $\calW$, $\theta^+$-many distinct functions from $Z$ to $\kappa^+$,  $i(\kappa)\geq\theta$.
 If we then force with $\Add(\aleph_k,\kappa)$, then Theorem \ref{foreman} implies that in the extension, there is a $\lambda$-complete uniform ideal $\calJ$ on $Z$ such that $\p(Z)/\calJ$ has a dense set isomorphic to $\Add(\aleph_k,i(\kappa))$.  Finally, we note that in this model, $\theta=\lambda^{+m+2}$ and $|Z| = \lambda^{+n+m+2}$.
\end{proof}

\section{Questions}~\label{section.questions}

The following questions remain of interest.

\begin{enumerate}
\item Is it consistent that for some regular $\kappa>\omega$, $\kappa^+<\fri_\kappa(\kappa) < 2^\kappa$?
\item Given uncountable cardinals $\mu\leq\kappa$, determine the possibilities for $\spec(\mu,\kappa)$.  Is it consistent that such a set is infinite?
\item Is it consistent that for some $\kappa$, $\spec(\mu,\kappa) \not= \emptyset$ for infinitely many $\mu$?
\item Corollary \ref{separation} says that if $\mu_0<\mu_1$ are such that there exist both maximal $\mu_0$-free and maximal $\mu_1$-free families, then $\mu_0\leq\lambda_{\mu_0}\leq\mu_1\leq\lambda_{\mu_1}$.  One of the first two inequalities must be strict.  Is it possible that $\lambda_{\mu_0} = \mu_1$?
\end{enumerate}

\end{document}